\def\pd{\partial}
\def\R{\mathbb{R}}
\def\dom{\mathrm{dom}}
\def\<{\langle}
\def\>{\rangle}
\def\ker{\mathrm{Kernel}}
\def\Im{\mathrm{Range}}
\def\Mt{\widetilde{M}}
\def\Bt{\widetilde{B}}
\def\Ct{\widetilde{C}}
\def\Ht{\widetilde{H}}
\def\F{\mathcal{F}}
\def\N{\mathcal{N}}
\def\BLambda{\boldsymbol{\Lambda}}
\DeclareMathOperator*{\argmax}{argmax}
\DeclareMathOperator*{\argmin}{argmin}
 \DeclareMathOperator{\prox}{Prox}
  \DeclareMathOperator{\refl}{R}
\def\iden{\mathbb I}
\journalname{JOTA}
\begin{document}

\title{Asymptotic Linear Convergence of ADMM for Isotropic TV Norm Compressed Sensing}
\titlerunning{Local Convergence Rate for TV Compressed Sensing}

\author{Emmanuel Gil Torres \and Matt Jacobs \and Xiangxiong Zhang}

\institute{Emmanuel Gil Torres, Purdue University, 
              egiltorr@purdue.edu \\
              Matt Jacobs,   
              University of California, Santa Barbara,  
              majaco@ucsb.edu \\
              Xiangxiong Zhang,  
             Purdue University, 
              zhan1966@purdue.edu
}

\vspace{-3cm}

 \date{Received: date / Accepted: date}
%The correct dates will be entered by the editor.

\vspace{-1cm}

\maketitle

\begin{abstract}
 We prove an explicit local linear rate for ADMM solving the isotropic Total Variation (TV) norm compressed sensing problem in multiple dimensions, by 
    analyzing  the auxiliary variable in the equivalent  Douglas-Rachford splitting on a dual problem.  Numerical verification  on large 3D problems and real MRI data will be shown. {The proven rate is not sharp, but it provides an explicit upper bound that appears close to the observed convergence rate in numerical experiments, although we do not claim this behavior holds in general.}
\end{abstract}
\keywords{Isotropic TV Norm \and Compressed Sensing \and ADMM \and Asymptotic Linear Convergence}
\subclass{49J52 \and 65K05 \and 65K10 \and 90C25}

%All acknowledgements should be placed in the back of the paper after Conclusions..

\vspace{-1cm}

\section{Introduction}
\subsection{The Isotropic TV Norm Compressed Sensing}

The isotropic total variation (TV) norm compressed sensing (CS)  \cite{tvcs2015} is 
\begin{subequations}
% \label{eq:tvcs_primal}
\begin{align}
\label{eq:tvcs_primal-1}
    \min_{u} \|u\|_{TV}\quad \mbox{subject to} \quad  \hat u(\ell)=b_\ell,\quad \forall \ell\in \Omega=\{1, i_2, \cdots, i_m\}, 
\end{align}
where $u$ is a $d$-dimensional image of size $n_1\times n_2\times \cdots \times n_d=N$, $\hat u$ denotes the $d$-dimensional discrete Fourier transform of $u$, $\Omega$ is a set of observed frequency indices  with $m<N$, and 
  $b\in \mathbb{C}^m$ denotes the observed data. 
In \eqref{eq:tvcs_primal-1},  $1\in \Omega$ means that given observed data should include the zeroth frequency of $u$.

We   also regard $u$ as a vector $u\in \R^N\backsimeq \R^{n_1\times n_2\times \cdots \times n_d}$.
Let $\mathcal K  : \R^N \to [\R^N]^d$ denote the discrete gradient operator, which will be defined in Section \ref{sec-notation}.
Then
 the isotropic TV norm is defined as  $\|u\|_{TV} \coloneq \|\mathcal K u\|_{1,2}$ and  $\|\cdot \|_{1,2}$ norm is
\begin{align}
\label{1-2-norm}
    \|v\|_{1,2} =  \sum_{j=1}^N \sqrt{\sum_{i=1}^d|v^i_j|^2},\quad   v= \begin{bmatrix}
        v^1 \\ \vdots \\ v^d
    \end{bmatrix}\in [\R^N]^d,\quad  v^i=\begin{bmatrix}
v^i_1 \\ \vdots \\ v^i_N
    \end{bmatrix}\in \R^N. 
\end{align}
 \end{subequations}
Note that this reduces to the classical $\ell^1$ norm for $\R^N$ 
when $d=1$.

For processing images,
the isotropic TV norm was introduced for  denoising in \cite{rudin1992nonlinear}, and used in many applications such as deconvolution and zooming, image in-painting and motion estimation \cite{chambolle2011first}, as well as compressed sensing \cite{candes2006robust}. Total Variation Compressed Sensing (TVCS) has been used practically in the areas of nuclear medicine and limited view angle tomosynthesis studies \cite{persson2001total,li2002accurate,kolehmainen2003statistical,velikina2007limited}.
Though in this paper we only focus on the Fourier measurements, e.g., MRI \cite{lustig2007sparse},
 the algorithm and our analysis may also be useful for applications using the Radon Transform \cite{leary2013compressed} and radio interferometry  \cite{wiaux2009compressed} since the sampling process can be modeled as samples of the Fourier transform \cite{leary2013compressed,wiaux2009compressed}. 
 
 % Moreover, it was compared to the performance of statistical methods for image reconstruction problems and was found to produce less streaky structures and do well with edge-preservation, however; it can show misleading patchy structures \cite{tang2009performance}.  

\subsection{ADMM for TV Norm Minimization}

For  solving \eqref{eq:tvcs_primal-1},
we focus on the alternating direction method of multipliers (ADMM) \cite{fortin2000augmented}, and study  its asymptotic linear convergence rate. Though the local linear convergence has been established for ADMM solving TV norm minimization \cite{liang2017local} and \cite{aspelmeier2016local}, no explicit rates were given for the multi-dimensional case due to the fact that $\|\cdot\|_{1,2}$ is no longer locally polyhedral for $d\geq 2$. 
There are  other  popular first order splitting methods, such as the primal dual hybrid gradient (PDHG) method \cite{chambolle2011first}. For problem \eqref{eq:tvcs_primal-1}, it has been well known \cite{gabay1979methodes,gabay1983applications,glowinski1989augmented,esser2009applications,esser2010general} that ADMM is also equivalent to quite a few popular first order methods   with special choice of parameters, including Douglas-Rachford splitting (DRS) \cite{lions1979splitting} and split Bregman method \cite{goldstein2009split}. In Section \ref{sec:gprox-pdhg}, we will show that ADMM is also equivalent to G-prox PDHG method introduced in \cite{jacobs2019solving}, which was proven and  shown to be efficient for very large images. 
 
 % However, all of those methods require the solution of a linear system. 
 % In the original PDHG \cite{chambolle2011first}, the largest overhead comes from matrix-vector products, which makes it amenable to acceleration by using GPUs, or distributed clusters. For Linear Programs (LP), PDHG was used as the foundation of a new algorithm PDLP which has the potential to solve large scale problems where the simplex and interior-point methods fail due to their reliance on matrix factorization. 

% Another motivation to study splitting methods for the TV norm minimization is for the fact that it can be reformulated as a Second-Order Cone Program (SOCP) \cite{goldfarb2005second}. For $d=1$, all of the cone constraints are of dimension two or less, which can be posed as a linear program (LP) \cite{alizadeh2003second}. For $d\ge 2$, the cone constraints are all of dimension three or larger and hence it can no longer be reduced to an LP. Therefore, studying the local linear convergence rate of first order splitting methods to the TV norm minimization for $d\geq 2$, may provide some insight on how first order methods   behave for  SOCP problems.

ADMM can be applied to any problem of the form
% \begin{subequations}
% \label{eq:primal}
    \begin{align}
    \label{eq:primala}
    \min_{u\in X} f(\mathcal K u) + g(u),
\end{align}
where $X,Y$ are two finite-dimensional real Hilbert spaces, $\mathcal K:X \to Y$ is a continuous linear operator,  and $g:X \to \R$ and $f: Y \to \R$ are proper, convex, and lower semi-continuous functions. 
To write problem \eqref{eq:tvcs_primal-1} in the form of problem \eqref{eq:primala}, we use
\begin{align*}
% \label{eq:primalb}
  X=\mathbb R^N,\quad   \mathcal K:\R^N \to [\R^{N}]^d, \quad f(v)=\|v\|_{1,2}, \quad g(u)=\iota_{\{u: Au=b\}}(u),
\end{align*}
where $\iota_C(u)=\begin{cases}
    0, & u\in C\\ +\infty, & u\notin C
\end{cases}$ is the indicator function of a set $C$, and $Au=b$ denotes measurements $\hat u_{\ell}=b_{\ell}, \ell\in \Omega$ in  \eqref{eq:tvcs_primal-1}.
% \end{subequations}
  ADMM for \eqref{eq:primala} is described on the following page.
\begin{algorithm}[htbp]
\caption{ADMM with step size $\gamma$.}
\label{alg-ADMM}
\begin{algorithmic}[1]
  % \scriptsize
    \STATE $x_{k+1} = \argmin_x \ g(x) + \langle \mathcal{K}x,z_k \rangle + \frac{\gamma}{2}\|\mathcal Kx-y_k\|^2 $
    \vspace*{0.3cm}
    \STATE $y_{k+1} = \argmin_y f(y) - \langle y,z_k\rangle + \frac{\gamma}{2} \|y -\mathcal Kx_{k+1}\|^2 $
    \vspace*{0.3cm}
    \STATE $z_{k+1} = z_k - \gamma \big(y_{k+1}-\mathcal Kx_{k+1} \big)$
\end{algorithmic}
\end{algorithm}

\subsection{The Main Result: A Local Linear Rate of ADMM}

The Fenchel dual problem of \eqref{eq:primala} can  be written  as
\begin{align}\label{eq:dual}
\min_{p\in \R^{N\times d}} f^*(p) + h^*(-p),\quad  h^*(-p):=g^*(-\mathcal K^*p),
\end{align}
where $f^*,g^*$ are convex conjugates of $f,g$, and $\mathcal K^*$ is the adjoint operator of $\mathcal K$.
For analyzing   Algorithm \ref{alg-ADMM}, we will consider the Fenchel dual problem to \eqref{eq:dual}. As shown in Appendix \ref{appendix-dual},  the dual problem of \eqref{eq:dual} can be given as
    \begin{equation}
    \label{eq:doubleDual}
        \min_{v\in \R^{N\times d}} f(v)+h(v),\quad f(v)=\|v\|_{1,2}, \quad h(v) =\iota_{\mathcal K \{u: Au=b\}}(v),
    \end{equation}     
  where $\mathcal K \{u: Au=b\}:=\{v: v=\mathcal Ku, Au=b\}$.  
It is well known that 
  the ADMM on \eqref{eq:primala} with a step size $\gamma$ is equivalent to   DRS on \eqref{eq:dual}  with a step size $\gamma$, which is also equivalent to 
  DRS on
  \eqref{eq:doubleDual} with a  step size $\frac{1}{\gamma}$ as reviewed in Appendix \ref{appendix-drs-primal-dual}.
  Next, we describe DRS solving \eqref{eq:doubleDual} which will be used to analyze Algorithm \ref{alg-ADMM}.
  Let $\iden$ be the identity operator.
Define the proximal and reflection operators with a step size $\tau>0$ respectively  as
\begin{align*}
    \prox_{f}^{\tau} (x) = \operatorname{argmin}_z  f(z)+\frac{1}{2\tau}\|z-x\|^2, \quad \refl_{f}^\tau=2\prox_{f}^\tau-\iden.
    \label{prox-def}
\end{align*}

DRS on problem \eqref{eq:doubleDual} is defined by a fixed point iteration of the operator $H_\tau=\frac{\iden + \refl_h^{\tau}\refl_f^{\tau}}{2}$. In particular,
 in Algorithm \ref{alg-DR-double-dual}, $q_k$ is an auxiliary variable and $v_k$ converges to the minimizer of \eqref{eq:doubleDual}.
 The equivalence between Algorithm \ref{alg-ADMM} and Algorithm \ref{alg-DR-double-dual} will be reviewed in Section \ref{sec:equivalence-implementation}.

The function  $f(v)=\|v\|_{1,2}$ is sparsity promoting \cite{santosa1986linear}, and its proximal operator $\prox_f^{\tau}$ is the well known Shrinkage operator in multiple dimensions. Let $S_\tau$ denote the shrinkage operator with step size $\tau$.
For any $q=[q^1 \ \cdots \ q^d]^T\in [\R^{N}]^d$ with $q^i=[q^i_1 \ \cdots \ q^i_N]^T\in \R^{N}$, we introduce the notation $q_j=[q_j^1 \ \cdots \ q_j^d]\in \R^d$ and we will call the subscript the {\it spatial index}. 
Then the shrinkage operator $\prox_f^{\tau}(q)=S_\tau(q) \in   [\R^{N}]^d$ can be expressed   as
\begin{equation}
 \label{shrinkage}
    \prox_f^{\tau}(q)_j = S_{\tau}(q)_j = \begin{cases}
          0, \quad &\mathrm{if} \ \|q_j\| \le \tau \\
        q_j - \tau \frac{q_j}{\|q_j\|}, &\mathrm{otherwise}
    \end{cases}.
\end{equation}

 \begin{algorithm}[htbp]
\caption{Douglas-Rachford splitting (DRS) on Problem \eqref{eq:doubleDual} with a step size $\tau>0$.}
\label{alg-DR-double-dual}
\begin{algorithmic}[1]
  % \scriptsize
    \STATE  $q_{k+1} = H_\tau(q_k)=\frac{\iden+\refl_h^{\tau}\refl^{\tau}_f}{2}(q_k) = \prox_h^{\tau}(\refl^{\tau}_f(q_k)) + q_k - \prox_f^{\tau}(q_k)$
    \vspace*{0.3cm}
    \STATE $v_{k+1} = \prox_f^{\tau}(q_{k+1})$
\end{algorithmic}
\end{algorithm}

We need 
proper assumptions so that  \eqref{eq:doubleDual} 
has a unique minimizer.  
\begin{assumption}\label{asmp:sizeOmega}
Let $u_*$ be the true image, $s>0$ be a fixed  accuracy parameter, $\mathcal{K}u_*$ be the gradient of the image, and $\mathcal S$ be the support of $\mathcal{K}u_*$. Let $|\mathcal S|$ denote the number of nonzero entries in $\mathcal{K}u_*$. Assume  $\Omega$ is chosen uniformly at random from sets of size $|\Omega | = m \ge C_s^{-1}\cdot |\mathcal S| \cdot \log(N)$ for some constant $C_s$.
\end{assumption}

\begin{theorem}[Theorem 1.5 in \cite{candes2006robust}]
\label{thm:uniqueness}
    Under Assumption \ref{asmp:sizeOmega} in which $C_s \approx \frac{1}{23(s+1)}$ for $|\Omega| \le N/4, s\ge 2, \ \mathrm{and} \ N\ge 20$, with probability at least $1-O(N^{-s})$, the minimizer $v_*$ to  \eqref{eq:doubleDual} is unique and   $v_*=\mathcal{K}u_*$. 
\end{theorem}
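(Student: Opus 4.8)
The plan is to first recognize that the double-dual problem \eqref{eq:doubleDual} coincides, under the identification $v=\mathcal Ku$, with the TV minimization problem \eqref{eq:tvcs_primal-1}: minimizing $\|v\|_{1,2}$ over $v\in\mathcal K\{u:Au=b\}$ is the same as minimizing $\|\mathcal Ku\|_{1,2}=\|u\|_{TV}$ over feasible images. Since $b$ is the observed partial Fourier data of the true image $u_*$, the point $u_*$ is feasible and hence $v_*=\mathcal Ku_*$ lies in the feasible set of \eqref{eq:doubleDual}. The statement therefore reduces to exact recovery of $u_*$ by TV minimization, which is exactly the content of the cited result; so the task is to reproduce its dual-certificate argument.

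First I would write down the first-order optimality and uniqueness conditions. Because the constraint set $\mathcal C=\mathcal K\{u:Au=b\}$ is affine with direction space $\mathcal K(\ker A)$, the point $v_*$ is a minimizer of \eqref{eq:doubleDual} if and only if some subgradient $w\in\partial\|\cdot\|_{1,2}(v_*)$ satisfies $\mathcal K^*w\in\mathrm{Range}(A^*)$, i.e.\ $\mathcal K^*w$ is a trigonometric polynomial whose frequencies lie in $\Omega$. Writing $\mathcal S$ for the support of $v_*=\mathcal Ku_*$, the subgradient is forced on the support, $w_j=(v_*)_j/\|(v_*)_j\|$ for $j\in\mathcal S$, while off the support $\|w_j\|\le 1$. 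The sufficient condition for $v_*$ to be the \emph{unique} minimizer is the existence of such a $w$ with the \emph{strict} inequality $\|w_j\|<1$ for every $j\notin\mathcal S$, together with injectivity of $A$ restricted to images whose gradient is supported on $\mathcal S$.

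The heart of the argument is to construct this dual certificate explicitly and then verify the off-support strict inequality with the stated probability. Following the cited construction, I would take $\mathcal K^*w$ to be the minimum-energy interpolant among trigonometric polynomials with frequencies in $\Omega$ that match the prescribed gradient direction on $\mathcal S$; this requires invertibility of a Gram matrix built from the random Fourier modes restricted to $\mathcal S$, which holds with high probability by eigenvalue concentration bounds for random partial-Fourier submatrices. The remaining and decisive step is to bound $\|w_j\|$ strictly below $1$ simultaneously for every $j\notin\mathcal S$ by a Bernstein/Hoeffding-type concentration estimate over the random draw of $\Omega$, followed by a union bound over the at most $N$ off-support spatial indices. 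This is precisely where the oversampling $m\ge C_s^{-1}|\mathcal S|\log(N)$ and the explicit constant $C_s\approx\frac{1}{23(s+1)}$ enter, yielding the failure probability $O(N^{-s})$.

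The main obstacle I anticipate is this probabilistic concentration step: unlike the anisotropic $\ell^1$ case, the isotropic certificate is vector-valued, so one must control the Euclidean norm $\|w_j\|$ of a random vector in $\R^d$ rather than a scalar, and the interpolation conditions couple the $d$ gradient components, which complicates the tail estimates. Finally, uniqueness transfers back to the gradient variable: since $1\in\Omega$ fixes the DC component, $u_*$ is the unique feasible minimizer, and because $\mathcal K$ is injective on mean-zero images the gradient $v_*=\mathcal Ku_*$ is uniquely determined, completing the reduction.
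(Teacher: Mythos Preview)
The paper does not prove this theorem at all: it is quoted verbatim from \cite{candes2006robust} (as the theorem header indicates) and used as a black box throughout. There is no proof or proof sketch in the paper to compare against.

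Your proposal is therefore not a comparison target but an attempt to reprove the cited result itself. As a sketch of the Cand\`es--Romberg--Tao argument it is broadly accurate: the reduction of \eqref{eq:doubleDual} to \eqref{eq:tvcs_primal-1} via $v=\mathcal Ku$, the dual-certificate optimality condition $\mathcal K^*w\in\Im(A^*)$ with $w\in\partial\|v_*\|_{1,2}$, the least-norm interpolant construction, and the concentration-plus-union-bound estimate for the off-support strict inequality are all the right ingredients, and your identification of the vector-valued certificate as the place where the isotropic case diverges from the scalar $\ell^1$ analysis is on point. One caveat worth flagging: Theorem~1.5 in \cite{candes2006robust} is stated and proved for the anisotropic TV norm (separate $\ell^1$ penalties on each partial difference), where the certificate decouples coordinatewise and the scalar Bernstein bounds apply directly; the present paper invokes it for the isotropic norm without further comment, so the ``obstacle'' you anticipate is real and is in fact glossed over by the citation rather than resolved. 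If you want a fully rigorous argument in the isotropic case you would need either a vector-valued concentration inequality or a reduction to the anisotropic bound, neither of which appears in the paper.
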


Assume the minimizer $v_*$ to  \eqref{eq:doubleDual} vanishes at $r$ spatial indices, i.e., $(v_*)_j=[(v_*)_j^1 \ \cdots \ (v_*)_j^d]=0$ for $j=j_1, \cdots, j_r$. Let $e_i\in \R^N$   be the standard basis in $\R^N$. 
Denote the basis vectors corresponding to zero components
in $v_*$ as $e_i$ $(i=j_1, \cdots, j_r)$.
Let $B=[ e_{j_1}, \ ... \ , \ e_{j_r} ]^T \in \R^{r\times N}$ be the selector matrix of the zero components of $v_*$.
Let $\widetilde{B}$  be the block diagonal matrix
\begin{equation}\label{eq:bTildeMatrix}
    \widetilde{B} = \begin{pmatrix}
        B & & \\
         & \ddots & \\ && B

    \end{pmatrix}\in \R^{dr}\times \R^{dN}.
\end{equation}

For Algorithm \ref{alg-DR-double-dual}, its fixed point $q_*$ is not unique, depending on the initial guess $q_0$, even if the minimizer $v_*$ to Problem \eqref{eq:doubleDual} is unique. 
Our main result is a local linear rate  of Algorithm \ref{alg-DR-double-dual} solving problem \eqref{eq:doubleDual} for {\it standard} fixed points  similar to the ones defined in \cite{demanet2016eventual}, in the sense of the following. 

\begin{definition}\label{def:qSpace}
{\it For TVCS \eqref{eq:tvcs_primal-1} with 
measurements denoted as $Au=b$,
consider its equivalent problem \eqref{eq:doubleDual}  with a solution $v_*$.
Let $\overline{\mathcal{B}_{\tau}(0)}$ be the closed ball in $\R^d$ of radius $\tau$ centered at $0$,
and $\overline{\mathcal{B}_{\tau}(0)}^c$ be its complement.
Define  
    \begin{align*}
        \mathcal{Q}_j &= \begin{cases}
        \overline{\mathcal{B}_{\tau}(0)}, \quad &\mathrm{if} \ (v_*)_j=\begin{bmatrix}
            (v_*)^1_j &
            \cdots & (v_*)^d_j 
        \end{bmatrix}^T= 0 \\
        \overline{\mathcal{B}_{\tau}(0)}^c, &\mathrm{if} \ (v_*)_j=\begin{bmatrix}
            (v_*)^1_j &
            \cdots & (v_*)^d_j 
        \end{bmatrix}^T\neq 0
    \end{cases}  \subset \R^d, \\
    \mathcal{Q} &= \{v\in   [\R^{N}]^d: S_{\tau}(v)_j =0 \iff (v_*)_j = 0  \} \backsimeq \mathcal{Q}_1 \oplus \hdots \oplus \mathcal{Q}_N,  
\end{align*} 
which is the preimage of the shrinkage operator \eqref{shrinkage} on  vectors with the same support set as $v_*$.
Let $q^0$ be the initial value in DRS, and $q_* = \lim_{k\to \infty} H_{\tau}^k(q^0)$.  
We call $(b, A; q_0)$ a standard problem for the DRS if $q_*$ belongs to
the interior of $\mathcal Q$. In this case, we call  $q_*$  an {interior fixed point}. 
Otherwise, we say that $(b, A; q_0)$
is nonstandard for DRS and that $q_*$
is a boundary fixed point.}
\end{definition}
 Now the main result of this paper can be stated as follows:
\begin{theorem}
\label{thm-main}
    Let $\theta_1$ be the smallest non-zero principal angle  between the two linear spaces $\mathcal{K}\ker{A}= \{ \mathcal{K}u: u \in \ker(A)\}$ and $\ker(\widetilde{B})$ with $\widetilde{B}$ defined in \eqref{eq:bTildeMatrix}. 
    Consider ADMM (Algorithm \ref{alg-ADMM}) solving \eqref{eq:tvcs_primal-1} with a step size $\gamma=\frac{1}{\tau}>0$, which is equivalent to DRS (Algorithm \ref{alg-DR-double-dual}) solving \eqref{eq:doubleDual} with a step size $\tau$. 
    The convexity of the problem \eqref{eq:doubleDual} implies that  DRS iterates $q_k$ converge to a fixed point $q_*$.
    Assume that   $q_*$ is  an interior fixed point.
    Under Assumption \ref{asmp:sizeOmega}, with probability $1-O(N^{-s})$,  for small enough $\tau>0$, there is an integer $K$ such that for all $k\ge K$, $ \|q_k - q_*\| \le \left[\cos\theta_1 + \max_{j:\|(v_*)_j\|\ne 0} \frac{2\tau}{\|(v_*)_j\|_2} \right]^{k-K} \|q_K - q_*\|.$
  
\end{theorem}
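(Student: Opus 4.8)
The plan is to establish the rate by a finite-support-identification argument followed by a local linearization of the DRS operator $H_\tau$, extending the polyhedral analysis of \cite{demanet2016eventual} to the non-polyhedral $\ell^{1,2}$ setting. First I would exploit the hypothesis that $q_*$ is an interior fixed point: since $q_*$ lies in the interior of $\mathcal Q$ and $q_k\to q_*$, continuity gives an index $K$ so that $q_k\in\operatorname{int}(\mathcal Q)$ for all $k\ge K$. On $\operatorname{int}(\mathcal Q)$ the shrinkage $\prox_f^\tau=S_\tau$ is continuously differentiable --- identically zero on the spatial blocks where $(v_*)_j=0$, and equal to the smooth map $q_j\mapsto q_j-\tau q_j/\|q_j\|$ on the blocks where $(v_*)_j\ne 0$ --- while $\prox_h^\tau$ is the affine projection onto the affine set $C=\mathcal K\{u:Au=b\}$. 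Hence $H_\tau$ is smooth near $q_*$, and the iteration reduces, up to an $o(\|q_k-q_*\|)$ remainder, to the linear recursion driven by $J:=DH_\tau(q_*)$.

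Next I would compute $J$. Writing $P_j^{\perp}=\iden-(q_*)_j(q_*)_j^{T}/\|(q_*)_j\|^2$ on each nonzero block, the chain rule gives $DS_\tau(q_*)=\iden-\tfrac{\tau}{\|(q_*)_j\|}P_j^{\perp}$ on nonzero blocks and $0$ on zero blocks, so $R:=D\refl_f^\tau(q_*)$ equals $-\iden$ on the zero blocks and $\iden-\tfrac{2\tau}{\|(q_*)_j\|}P_j^{\perp}$ on the nonzero blocks. Since $\prox_h^\tau$ is affine with linear part the orthogonal projection $P_V$ onto $V:=\mathcal K\ker(A)$, one gets $J=\tfrac12\big(\iden+(2P_V-\iden)R\big)$. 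The key observation is that, as $\tau\to0$, $R\to 2P_T-\iden$, where $P_T$ is the orthogonal projection onto $T:=\ker(\widetilde B)$ --- precisely the subspace of vectors supported on the support of $v_*$. Thus $J=H_0+\tfrac12(2P_V-\iden)E$, where $H_0:=\tfrac12\big(\iden+(2P_V-\iden)(2P_T-\iden)\big)$ is the Douglas--Rachford operator for the two subspaces $V$ and $T$, and $E:=R-(2P_T-\iden)$ is block-diagonal, supported on the nonzero blocks, with $E_j=-\tfrac{2\tau}{\|(q_*)_j\|}P_j^{\perp}$.

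I would then invoke the classical spectral picture of two-subspace DRS. Since $2P_V-\iden$ and $2P_T-\iden$ are reflections, a principal-angle block decomposition shows that on each two-dimensional block at angle $\theta$, $H_0$ acts as $\cos\theta$ times a rotation; the fixed set $\operatorname{Fix}(H_0)=(V\cap T)\oplus(V^{\perp}\cap T^{\perp})$ collects the angle-$0$ directions, and the norm of $H_0$ restricted to $\operatorname{Fix}(H_0)^{\perp}$ equals $\cos\theta_1$. Because $\|2P_V-\iden\|=1$ and $E$ is block-diagonal with $\|E_j\|=\tfrac{2\tau}{\|(q_*)_j\|}$, the perturbation obeys $\|J-H_0\|\le\tfrac12\|E\|=\max_{j}\tfrac{\tau}{\|(q_*)_j\|}\le\max_{j:\|(v_*)_j\|\ne0}\tfrac{2\tau}{\|(v_*)_j\|}$, using $\|(q_*)_j\|=\|(v_*)_j\|+\tau$; the factor two in the theorem thus leaves slack, consistent with the claim that the rate is not sharp. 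Restricting the error to $\operatorname{Fix}(H_0)^{\perp}$ and using the triangle inequality would give the one-step contraction factor $\cos\theta_1+\max_{j}\tfrac{2\tau}{\|(v_*)_j\|}$, which for $\tau$ small enough is strictly below $1$; iterating from $k=K$ yields the stated bound. Note that for $d=1$ one has $P_j^{\perp}=0$, so $E=0$ and the rate collapses to the sharp polyhedral value $\cos\theta_1$, matching \cite{demanet2016eventual}.

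The main obstacle is that $q_*$ is not an isolated fixed point: $\operatorname{Fix}(H_\tau)$ is locally the affine slice $v_*+\tau\{\xi:\xi\in\partial f(v_*),\,-\xi\in\partial h(v_*)\}$, so $J$ carries a nontrivial $+1$ eigenspace and no genuine one-step contraction can hold along arbitrary directions. The delicate point is therefore to show that the error $q_k-q_*$ has asymptotically no component along $\operatorname{Fix}(H_0)$, so that only the contracting part of $J$ governs the decay. I would close this by combining the global firm nonexpansiveness of the convex DRS operator $H_\tau$ (which forces $q_k\to q_*$ and keeps $\|q_k-q_*\|$ nonincreasing), the invariance of $\operatorname{Fix}(H_0)^{\perp}$ under $H_0$, and the $O(\tau)$-smallness of the perturbation, to argue that the neutral component of the error stays controlled while the nonlinear remainder is absorbed into the constant $K$. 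Making this neutral-direction bookkeeping rigorous --- rather than the Jacobian computation itself --- is where the real work lies.
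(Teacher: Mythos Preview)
Your approach is conceptually close to the paper's --- both pivot on the two-subspace structure $V=\mathcal K\ker(A)$, $T=\ker(\widetilde B)$ and treat the $d\ge2$ case as a perturbation of the polyhedral picture --- but the paper's execution differs in one decisive respect, and this resolves exactly the gap you flag at the end.

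The paper does \emph{not} linearize. On $\mathcal Q$ the shrinkage has the exact form $S_\tau(q)=(I-\widetilde B^+\widetilde B)(q-\tau\mathcal N(q))$ (your block identity, written globally), and $\prox_h^\tau$ is affine, so one obtains the \emph{exact} identity (Lemma~\ref{lem8})
\[
H_\tau(q)-H_\tau(q_*)=\widetilde H\,(q-q_*)+\tau\,(I-2C)(I-\widetilde B^+\widetilde B)\bigl(\mathcal N(q)-\mathcal N(q_*)\bigr),
\]
with $\widetilde H=C(I-\widetilde B^+\widetilde B)+(I-C)\widetilde B^+\widetilde B$ your $H_0$. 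The nonlinear residual is then bounded by a global Lipschitz estimate for $q_j\mapsto q_j/\|q_j\|$ on the nonzero blocks (Lemma~\ref{lem7}), yielding the $\max_j\tfrac{2\tau}{\|(v_*)_j\|}$ term directly, with no $o(\|q_k-q_*\|)$ remainder to absorb. This is why the stated contraction holds for every $k\ge K$, not merely asymptotically.

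More importantly, the neutral direction is dispatched in one line. Writing both matrices in the principal-angle basis $(\widetilde C_0|\widetilde C_1|\widetilde C_2)$ (Lemma~\ref{lem-DRS-expression}), the $\widetilde C_2$-row of the nonlinear prefactor $(I-2C)(I-\widetilde B^+\widetilde B)$ is \emph{identically zero}, while $\widetilde H$ acts as the identity there. Hence $\mathbb P(q_{k+1}-q_*)=\mathbb P(q_k-q_*)$ \emph{exactly} for all $k\ge K$ (Lemma~\ref{C2-subspace-Lemma}); since $q_k\to q_*$, this constant is zero. No firm-nonexpansiveness bookkeeping is needed, and nothing is ``absorbed into $K$''. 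Your proposed route --- firm nonexpansiveness of $H_\tau$, invariance of $\operatorname{Fix}(H_0)^\perp$ under $H_0$, $O(\tau)$-smallness of $J-H_0$ --- does not close this: the invariance you cite is for $H_0$, not for $H_\tau$, and a Taylor remainder $o(\|q_k-q_*\|)$ could in principle deposit mass in the neutral direction at every step, with no mechanism in your argument to prevent accumulation. The exact decomposition is precisely what rules this out; replacing it by a Jacobian expansion is where your plan would stall.
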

  We remark that 
the local linear rate above looks similar to the one proven for $\ell^1$-norm compressed sensing in \cite{demanet2016eventual}, but with two differences. The first difference is that the angle $\theta_1$ in this paper for the TV norm is  different from the angle in  \cite{demanet2016eventual}  due to the fact that the set $\mathcal Q$ is more complicated for TV norm.
The second difference is  the  term $\max_{j:\|(v_*)_j\|\ne 0} \frac{2\tau}{\|(v_*)_j\|} $, which arises only in multiple dimensions, $d\ge 2$. When $d=1$, this additional term can be removed in the proof and the main result proven in this paper reduces to  the same local linear convergence rate in \cite{demanet2016eventual}. 
{Hence, the novelty lies in providing an explicit upper bound for the local linear convergence rate of TVCS in higher dimensions. The bound holds for any TVCS problem satisfying Assumption \ref{asmp:sizeOmega}; however, it is not sharp for general   problems.} 
On the other hand, our provable rate is close to the observed local linear convergence rate in our numerical experiments that satisfy Assumption \ref{asmp:sizeOmega}, {even for 3D problems of size $512^3$, although we do not claim that this behavior holds for general large problems.}
This is illustrated in Figure \ref{fig:intro-linear}, which shows the local linear convergence of ADMM for two TVCS  problems: one is a 2D Shepp–Logan phantom of size $64\times64$ and the other one is a 3D phantom of size $512^3$. Although  our provable rate depends on the step size 
\(\gamma = \frac{1}{\tau}\),  the choice of step sizes does not seem to 
significantly affect the local linear convergence rate, e.g., the actual rate seems to be dominated by $\cos\theta_1$, as suggested by Figure~\ref{fig:intro-linear} and
also other numerical tests in Section~\ref{sec:tests_tau}.

% to a 2D Shepp-Logan phantom of size $64\times64$ and a 3D Shepp-Logan phantom of size $512^3$. The step-sizes in ADMM for the 2D and 3D problem were $\gamma = \frac{1}{\tau} = 100$ and $\gamma = \frac{1}{\tau} = 10$, respectively, and in both tests $30\%$ of the Fourier frequencies are observed.}

% On the other hand, as shown in Figure \ref{fig:intro-linear}, even though the proven rate is not sharp, it is not far from a sharp rate for 3D problems of a large size. 

\begin{figure}[h!]
    \centering
    \begin{subfigure}[b]{0.48\textwidth}
        \includegraphics[width=\textwidth]{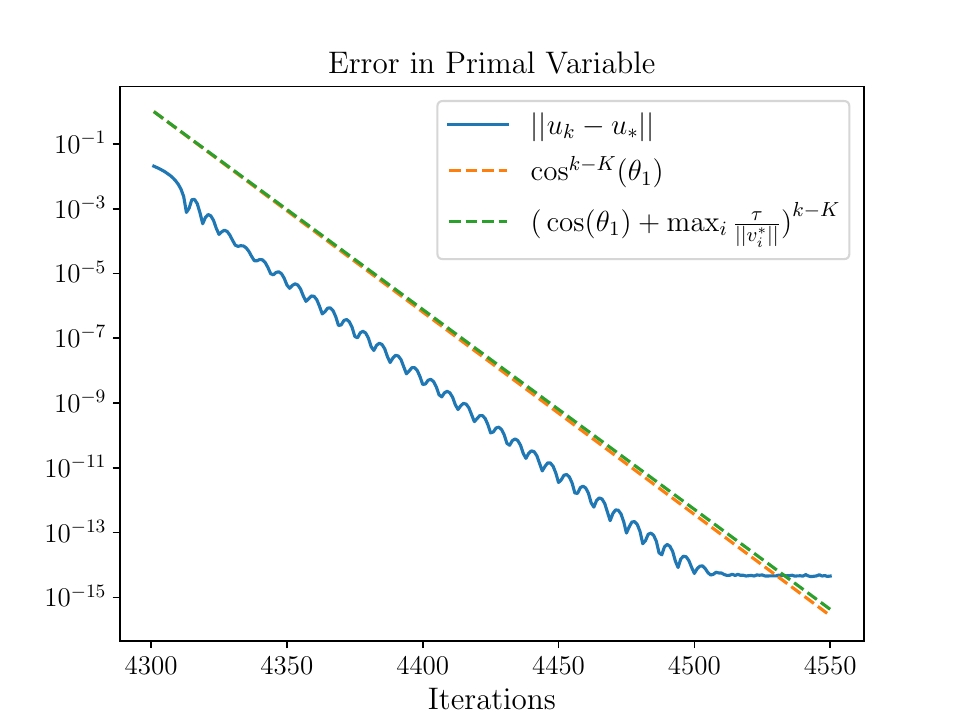}
        % \label{fig:intro}
    \end{subfigure}
    \hfill
    \begin{subfigure}[b]{0.48\textwidth}
        \includegraphics[width=\textwidth]{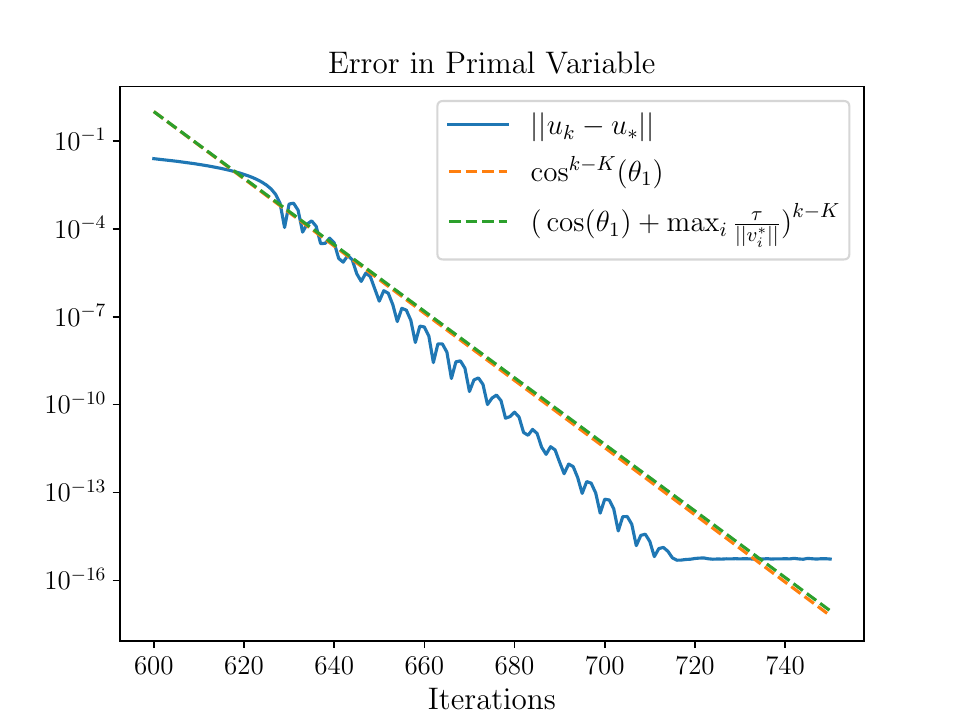}
        % \label{fig:linear}
    \end{subfigure}
    % \caption{The local linear rate of $u_k-u_*$ for TVCS. Here, $u_*$ is the true image, $u_k$ is the image at $k$-th iteration of ADMM, i.e., $x_k$ in Algorithm \ref{alg-ADMM}. Left: a 2D Shepp–Logan phantom image of size $64\times 64$ with a step size $\gamma = \frac{1}{\tau}=100$. Right: a 3D Shepp–Logan phantom image of size $512^3$, with a step size $\gamma=\frac{1}{\tau}=10$.  }
    \caption{{The local linear rate of $u_k-u_*$ for TVCS. Here, $u_*$ is the true image, and $u_k$ is the image at $k$-th iteration of ADMM. Left: 2D Shepp–Logan phantom  $(64\times 64)$,  step size $\gamma = \frac{1}{\tau}=100$, $K=4300$. Right: 3D Shepp–Logan phantom  $(512^3)$, step size $\gamma=\frac{1}{\tau}=10$, $K = 600$. In both tests, about $30\%$ of the Fourier frequencies are observed.  }}
    \label{fig:intro-linear}
\end{figure}

\subsection{Related Work, Contributions and Outline}
Convergence rates of DRS and ADMM have been studied in different settings. In \cite{lions1979splitting}, a global linear convergence was shown when one of the two functions is strongly convex with a Lipschitz continuous gradient. In \cite{giselsson2016linear,davis2016convergence}, local linear convergence was shown under the assumptions of smoothness and strong convexity. For $\ell^1$-norm compressed sensing, local linear rate is related to the first principal angle between two subspaces in \cite{demanet2016eventual}. In \cite{boley2013local}, local linear convergence of ADMM was shown for quadratic and linear programs as long as the solution is unique and the strict complementary condition holds. {Additionally, local linear convergence of ADMM was also shown for quadratic and linear programs restricted to polyhedral sets in \cite{han2013local} using the affine variational inequality. In \cite{goldfarb2005second}, it was shown that the isotropic TV minimization can be reformulated as a Second-Order Cone Problem (SOCP). In particular, TVCS for $d=1$ can be posed as a linear program \cite{alizadeh2003second}, for which the results in \cite{han2013local} applies.}
By the idea of partial smoothness developed in \cite{lewis2002active}, the results of \cite{demanet2016eventual,bauschke2014rate,boley2013local} can be unified under a general framework in \cite{liang2017local}, which shows the existence of local linear convergence for many problems, and provides explicit convergence rates if the  cost functions  are locally polyhedral.  In \cite{aspelmeier2016local}, it was proved that applying DR or ADMM to composite  problems consisting of a convex function and a convex function composed with an injective linear map yields local linear rates. 

% As recently shown in \cite{o2020equivalence}, PDHG method in \cite{chambolle2011first} can also be regarded as DRS applied to a special formulation of the same problem. 
% The work in \cite{lu2024geometry,applegate2021practical,applegate2024infeasibility} developed an algorithm for linear programs by PDHG with restarting and adaptive step sizes, and the convergence towards infeasibility certificates is proven to be sublinear with rates $\bigO(k^{-1})$ and $\bigO(k^{-0.5})$. In \cite{liu2017new}, iterates of DRS and modified fixed point iterations were used to provide infeasibility certificates for conic programs.

The main contribution of this paper is to provide an explicit rate for the local linear convergence of ADMM applied to isotropic TV norm compressed sensing problem. Our explicit rate, albeit not sharp mathematically, provides some insights into   behavior of ADMM for TV norm minimization. On the other hand, the proven rate matches well with observed rate for ADMM with a large step size $\gamma$ for large 3D problems as real 3D MRI data. Moreover, while our proof is largely based on the work in \cite{demanet2016eventual}, we introduce some novel ideas for the isotropic TV norm which might be also useful for other problems such as second order cone programs. 
Our main techniques include exploiting the specific structure of the DRS fixed points for specific problems, and using the equivalences of algorithms to study the local linear convergence through the equivalent problem \eqref{eq:doubleDual}.  
Other contributions consist of adding the recently developed algorithm G-prox PDHG \cite{jacobs2019solving}, to the already known equivalencies among ADMM, DRS, and Split-Bregman method, which will be summarized in Table \ref{tab:relations} in Section \ref{sec:gprox-pdhg} with derivations in the Appendix \ref{appendix-GroxPDHG}.

The rest of this  paper is organized as follows.   Section \ref{sec-notation} contains some preliminaries and notation needed. In Section \ref{sec:equivalence-implementation}, we provide the equivalence between ADMM and G-prox PDHG for general problems and give an explicit implementation formula for the problem \eqref{eq:tvcs_primal-1}. In Section \ref{sec:main}, we provide the theorem and proof of our main result. Section  \ref{sec:tests} includes numerical experiments, which validate the theoretical results and show what performance we can expect for 2D and 3D problems. 
Section \ref{sec:remark} gives concluding remarks.

%%%%% Preliminaries %%%%%%%
\section{Preliminaries}
\label{sec-notation}

\subsection{Notation and Preliminaries}

Let $\iden$ be the identity operator. Let $I$ be the identity matrix and $I_n$ denote the identity matrix of size $n\times n$. 
For any matrix $A$, $A^T$ denotes its transpose,   $A^*$ denotes its conjugate transpose and $A^+$ denotes its pseudo inverse. For a linear operator $\mathcal K$, $\mathcal K^*$ denotes its adjoint operator. For any $v =[v_1\ \cdots \ v_N]^T \in[\R^{N}]^d$, the $\|\cdot \|_{1,2}$ norm is defined in \eqref{1-2-norm}
and its dual norm is   $\|v\|_{\infty,2} = \max_{i=1, \hdots, N} \sqrt{ \sum_{i=1}^N  v_i^T v_i}. $ 
For convenience, we will also regard any $q \in [\R^{N}]^d$ as a vector in $\R^{Nd}$, then $\|q\|$ denotes the $2$-norm in $\R^{Nd}$.

All functions considered in this paper are {\it  closed,  convex,} and {\it proper} \cite{rockafellar,beck2017first}. A closed extended function is also a lower semi-continuous function \cite[Theorem 2.6]{beck2017first}.  
If  $C$ is a closed convex set, 
 the  indicator function $\iota_{C}(x)$ is a {closed convex proper} function thus also 
lower semi-continuous. For a function $f$, its {\it subgradient} is a set $\partial f(x)$.
We summarize a few useful results, see \cite{beck2017first}.
\begin{theorem}\label{eqn:pd_relation}
A closed convex proper function $f$ satisfies:
\begin{itemize}
    \item [(i)] $\prox^{\tau}_{f\circ (-\iden)}(x) = -\prox^{\tau}_f(-x).$
    \item [(ii)] $f^{**}(x) = f(x)$.
    \item [(iii)] $\langle x, y\rangle=f(x)+f^*(y) \Leftrightarrow x\in \partial f^*(y)\Leftrightarrow y\in \partial f(x)$.
   \item [(iv)] $x^* = \argmin_x \langle x, y^* \rangle + f(x) \ \iff \ -y^* \in \pd f(x^*).$
\item [(v)] {\it Moreau Decomposition}: $\prox_f^{\gamma}(x) + \gamma \prox_{f^*}^{\frac{1}{\gamma}}\Big(\frac{x}{\gamma}\Big) = x.$
\end{itemize}
\end{theorem} 
 
\vspace{-0.5cm}

\subsection{Discrete Fourier Transform and Differential Operators}
\label{sec-notation-3D}

{Let $\mathcal{F}$ denote the normalized discrete Fourier transform (DFT) matrix, and $\hat u = \mathcal{F}u \in \mathbb{C}^N$ denote the normalized discrete Fourier transform of $u\in \R^N\backsimeq \R^{n_1\times n_2\times \cdots \times n_d}$. Let $\check v$ denote the inverse DFT of $v$, then 
 $\check v=\mathcal F^* v.$ We have $\langle u, v\rangle_{\R^N}=\langle \mathcal F u, \mathcal F v\rangle_{\mathbb{C}^N},  \forall u, v\in \R^N$,  and  $\langle u, v\rangle_{\mathbb C^N}=\langle \mathcal F^* u, \mathcal F^* v\rangle_{\mathbb R^N}, \forall u, v\in \mathbb C^N$ satisfying $\mathcal F^* u, \mathcal F^* v
\in \mathbb R^N$. }

\paragraph{Notation for One-Dimensional Problems:}

{Let $T$ be the normalized 1D DFT matrix. Then, when $d=1$, $\widehat{u} = \mathcal{F}u = Tu$ and $u\in\R^N = \R^{n_1}$. For the discrete gradient operator, we consider the 1D periodic case. For $u \in \R^{n_1}$, we define the forward difference matrix as,
\begin{equation}
    K=\begin{pmatrix}
              -1 & 1 &  & & \\
                % & -1 & 1 &  & \\    
                &\ddots & \ddots & & \\
                 & & -1 &1 \\
              1  & &   & -1
             \end{pmatrix}. \label{matrix-K}
\end{equation}
Then its transpose  $K^T$ approximates the negative derivative and $D=K^TK$ is the negative discrete Laplacian. For a one-dimensional image $u$, the operators $\mathcal K$ and $\mathcal K^*$ can be expressed as $\mathcal K u=Ku$ and $\mathcal K^* u=K^Tu$. 
% Let $T$ be the normalized DFT matrix for 1D, i.e., $\hat u=\mathcal F u=T u$,
% and $T^*T=I$, where
%  $T^*$ is the conjugate transpose of $T$.
Notice that the matrix $K$ in \eqref{matrix-K} is circulant. Hence, it can be diagonalized by DFT matrix, i.e., $K=T^*\Lambda T$ where $\Lambda$ is diagonal. This is one of the key properties that allows for an explicit implementation formula in Section \ref{sec:equivalence-implementation}.  }

\paragraph{Notation for Multi-Dimensional Problems:}
For multiple dimensions, we focus on $d=2$ as an example of introducing notation.
For simplicity, we assume $n_1=n_2$ 
for a two-dimensional image. For $U\in \R^{n\times n}$, let $u=\mathrm{vec}(U)\in \R^N$ be the column-wise vectorization of the matrix, then $(A\otimes B) u=\mathrm{vec}(BUA^T), \forall A, B\in \mathbb{C}^{N\times N}.$ {Now the DFT of a 2D image, $u = \mathrm{vec}(U) \in \R^N,$ is $\widehat{u} = \mathcal{F}u = (T\otimes T)u = \mathrm{vec}(TUT^T)$.}
Define the
discrete gradient and negative discrete divergence as follows,
\begin{align*}
    & \nabla_h u = \begin{pmatrix}
        K \otimes I \\ I \otimes K   
    \end{pmatrix} u= \begin{pmatrix}
        \mathrm{vec}(U K^T) \\ \mathrm{vec}(KU)   
    \end{pmatrix},  -\nabla_h\cdot \begin{pmatrix}
       u\\ v
    \end{pmatrix}   = \begin{pmatrix}
        K^T \otimes I &  I \otimes K^T
    \end{pmatrix}   \begin{pmatrix}
       u \\  v
    \end{pmatrix},
\end{align*}
 where $U, V\in \R^{n\times n}, u=\mathrm{vec}(U), v=\mathrm{vec}(V).$
The operators $\mathcal K$ and $\mathcal K^*$ can be expressed by $\mathcal K u=\nabla_h u \in \R^N\oplus \R^N,\ \forall u\in \R^N, $ and 
$\mathcal K^* p=-\nabla_h\cdot p\in \R^N,\ \forall  p\in \R^N\oplus \R^N.$
% Let $\mathcal F$ be the  DFT matrix for 2D image $u=\mathrm{vec}(U)\in \R^N$ where $U\in\R^{n\times n}$, then $\hat u=\mathcal F u=(T\otimes T)u=\mathrm{vec}(TUT^T).$
With the fact $$K\otimes I=(T^*\Lambda T)\otimes (T^* I T)=(T^*\otimes T^*)(\Lambda\otimes I)(T\otimes T)=
\mathcal F^* (\Lambda\otimes I) \mathcal F, $$
the operator $\mathcal K:\R^N \to \R^{2N} \cong \R^N \oplus \R^N$ can be decomposed as
\begin{align}
  \label{discreteGrad-Fourier}   \mathcal K &=\nabla_h= \begin{pmatrix}
        \mathcal F^* & 0 \\
        0 & \mathcal F^*
    \end{pmatrix}
 \begin{pmatrix}
        \Lambda \otimes I \\ I \otimes \Lambda   
    \end{pmatrix} \mathcal F= \begin{pmatrix}
        \mathcal F^* & 0 \\
        0 & \mathcal F^*
    \end{pmatrix} \BLambda \mathcal F,\quad \BLambda=\begin{pmatrix}
        \Lambda \otimes I \\ I \otimes \Lambda   
    \end{pmatrix},\\
    \label{discretediv-Fourier}    \mathcal  K^* &=-\nabla_h\cdot = \mathcal F^*\begin{pmatrix}
        \Lambda^* \otimes I & I \otimes \Lambda^*
    \end{pmatrix}
     \begin{pmatrix}
        \mathcal F & 0 \\
        0   & \mathcal F 
    \end{pmatrix}=\mathcal F^*\BLambda^*
     \widetilde{\mathcal F},\quad \widetilde{\mathcal F}=\begin{pmatrix}
        \mathcal F & 0 \\
        0   & \mathcal F 
    \end{pmatrix}.
\end{align}

The d-dimensional case can be defined similarly. We refer to \cite[Section 2.4]{liuGPU} for how to define $\mathrm{vec}(U)$ for a 3D image $U$. Let $K_{n}$ be the matrix in \eqref{matrix-K} of size $n\times n$, 
then consider the matrix constructed by one $K$ matrix and $d-1$ identity matrices via the Kronecker  product
\begin{equation}\label{eq:K_multid}
\mathcal K=\nabla_h=\begin{pmatrix}
    \mathcal K^1\\ \vdots \\ \mathcal K^d
\end{pmatrix},\quad \mathcal K^i=I_{n_1}\otimes \cdots \otimes K_{n_i}\otimes \cdots \otimes I_{n_d} \in \R^{N\times N}.
\end{equation}
Recall $K$ in \eqref{matrix-K} has an eigenvalue decomposition $K=T^*\Lambda T$. Let $\Lambda_{n}$ be the same diagonal eigenvalue matrix of size $n\times n$.
We construct the matrix
$$ \BLambda = \begin{pmatrix}
    \BLambda^1 \\ \vdots \\ \BLambda^d
\end{pmatrix}, \quad \BLambda^i=I_{n_1}\otimes \cdots \otimes\Lambda_{n_i}\otimes \cdots \otimes I_{n_d} \in \R^{N\times N},$$
and let $\lambda^i_{\ell}$ $(\ell=1,\cdots N)$ be the diagonal entries of $\BLambda^i.$

\subsection{The Constraint of Partially Observed Fourier Frequencies}\label{sec-notation-constraints}

For simplicity, we focus on the case $d=2$ and the discussion for $d\ge 3$ is similar.  
In \eqref{eq:tvcs_primal-1}, the constraint $ \hat u(\ell)=b_{\ell},\quad \forall \ell\in \Omega=\{1, i_2, \cdots, i_m\}$ can be denoted as
an  affine constraint $Au = b$ by a linear operator $A:\R^N \to \mathbb{C}^m$ with $m < N$, where the linear operator $A=M \mathcal F$ is a composition of a mask $M$ and the 2D DFT matrix $\mathcal F$ such that $\mathcal F \mathcal F^* = I$.
The mask matrix $M\in \mathbb R^{m\times N}$ is the submatrix of the $I_{N}$.
We define $\Omega = \{1 ,i_2 \hdots, i_{m} \} \subset \{1,\hdots , N \}$ to be the indicator of which frequencies we know a priori, then $M = [ e_{1}; \ e_{i_1}; \ ... \ ; \ e_{i_{m}} ]^T\in \R^{m\times N}$, where $e_{\ell}$ are the standard basis vectors in $\R^N$.
 Notice, $AA^* = I_{m\times m}$, hence its pseudo inverse is $A^+ = A^*$. For convenience, we will use the notation
\begin{equation}
    \label{M-tilde}
     \widetilde{M}=\begin{pmatrix}
    M & 0 \\
    0 & M
\end{pmatrix},\quad \widetilde{A} = \begin{pmatrix}
    A & 0 \\
    0 & A
\end{pmatrix}=\begin{pmatrix}
    M\mathcal F  & 0 \\
    0 & M\mathcal F
\end{pmatrix}.
\end{equation}
Since $M$ is a submatrix of $I_{N}$, $M^*=M^T$.
Since $\Lambda \otimes I\in \R^{N\times N}$ is a diagonal matrix,  $M^* M (\Lambda \otimes I)$ is a diagonal matrix of size $N\times N$.
Therefore, we have $M^* M (\Lambda \otimes I)=[M^* M (\Lambda \otimes I)]^T= (\Lambda \otimes I)M^* M$, thus
\begin{equation}
    \label{m-matrix_property}
    \widetilde{M^*}\widetilde{M}\BLambda=\begin{pmatrix}
    M^* & 0 \\
    0 & M^*
\end{pmatrix}\begin{pmatrix}
    M & 0 \\
    0 & M
\end{pmatrix}\begin{pmatrix}
        \Lambda \otimes I \\ I \otimes \Lambda   
    \end{pmatrix}=\BLambda M^*M.
\end{equation}
Similarly,   $\BLambda^* \BLambda = \Lambda^*\Lambda \otimes I + I \otimes \Lambda^*\Lambda$ is  a diagonal matrix, thus 
\begin{equation}
    \label{m-matrix_property2} M^*M(\BLambda^*\BLambda)^{+} = (\BLambda^*\BLambda)^{+} M^*M.\end{equation}

\section{Equivalence to G-prox PDHG and an   Implementation Formula}
\label{sec:equivalence-implementation}

 \subsection{The Equivalence Between ADMM and G-prox PDHG}
 \label{sec:gprox-pdhg}

% In this section, we first give an equivalent primal dual formulation of ADMM, then provide an implementation formula for the TV compressed sensing problem. 
In this section, we mention when ADMM and G-prox PDHG are equivalent, mention their strengths and weaknesses, give an equivalent primal dual formulation of ADMM, and then provide an implementation formula for the TVCS problem. {It is worth noting that in general ADMM solves the more general problem $$\min_{u,v} f(v) + g(u) \quad \mbox{subject to} \quad  \mathcal{K}u + \mathcal{L}v =a.$$ Thus, ADMM has the strength of being applicable to a wider class of problems and being able to use the theory of variational inequalities or affine variational inequalities, as done in \cite{han2013local}. If $\mathcal{L} = - \iden$ and $a=0$, then we recover problem \eqref{eq:primala} for which we can show that ADMM and G-prox PDHG are equivalent. This is stated in Theorem \ref{thm-equivalence-GproxPDHG}, which will be proven in 
 Appendix \ref{appendix-GroxPDHG}. On problem \eqref{eq:primala}, G-prox PDHG has the advantage of the analysis in \cite{jacobs2019solving} where ergodic convergence of the cost function is established in the setting of general Hilbert spaces, possibly infinite-dimensional.} {While the equivalence on problem \eqref{eq:primala} provides some new theoretical insights 
 from provable results of G-prox PDHG in \cite{jacobs2019solving} to understanding ADMM, it does not yield essential practical advantages for implementing G-prox PDHG rather than ADMM, or conversely.} {If comparing ADMM in Algorithm \ref{alg-ADMM} to G-prox PDHG in Algorithm \ref{alg-Gprox-PDHG} in terms of implementation, we can see that their first steps are the same, and their second steps involve the proximal operator of $f$ and $f^*$ respectively, which are equivalent via Moreau's decomposition.  Thus there are neither advantages nor disadvantages if comparing Gprox PDHG to ADMM in terms of implementation. Both G-prox PDHG and ADMM directly track not only the primal but also dual variables, while DRS is naturally expressed in terms of a single auxiliary variable.}  There are many known equivalent yet seemingly different formulations of the ADMM in Algorithm \ref{alg-ADMM}.
We provide a summary of the variables that are equivalent in these algorithms in Table \ref{tab:relations}. These relations can be modified to extend to the generalized forms of these algorithms.

% The G-prox PDHG method introduced in \cite{jacobs2019solving} 
% for solving a composite convex minimization problem \eqref{eq:primala} can be written as Algorithm \ref{alg-Gprox-PDHG}. The equivalence of G-prox PDHG above and ADMM on this problem is stated in Theorem \ref{thm-equivalence-GproxPDHG}, which will be proven in 
%  Appendix \ref{appendix-GroxPDHG}. 

% \vspace{-0.35cm}

\begin{algorithm}[htbp]
\caption{G-prox PDHG with step sizes $\tau, \sigma>0$. 
\newline Initial guess $u_0\in \R^N, v_0, w_0\in [\R^{N}]^d$.}
\label{alg-Gprox-PDHG}
\begin{algorithmic}[1]
  % \scriptsize
    \STATE $u_{k+1} = \argmin_u g(u) + \langle \mathcal Ku,w_k \rangle + \frac{1}{2\tau}\|\mathcal K(u-u_k)\|^2  $
    \vspace*{0.3cm}
    \STATE $ v_{k+1} = \argmax_v -f^*(v) + \langle \mathcal Ku_{k+1},v\rangle - \frac{1}{2\sigma} \|v -v_k\|^2$
    \vspace*{0.3cm}
    \STATE $w_{k+1} = 2 v_{k+1} -v_{k} $
\end{algorithmic}
\end{algorithm}

% \vspace{-0.5cm}

\begin{theorem}
\label{thm-equivalence-GproxPDHG}
   Algorithm \ref{alg-ADMM} (ADMM) with a step size $\gamma>0$ is equivalent to Algorithm \ref{alg-Gprox-PDHG} (G-prox PDHG) with $\tau=\frac{1}{\sigma}=\frac{1}{\gamma}$ via the change of variables:
  $ u_k: = x_k, \quad p_k: = z_k, \quad
	 w_k: =\frac{1}{\tau} Kx_k + z_k - \frac{1}{\tau} y_k.$
\end{theorem}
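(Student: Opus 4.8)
The plan is to proceed by induction on the iteration index $k$, showing that if the stated change of variables holds at iteration $k$ then it holds at iteration $k+1$. Throughout I set $\tau = 1/\gamma$ and $\sigma = \gamma$, and I read the identification $p_k := z_k$ as asserting that the G-prox PDHG dual variable equals the ADMM multiplier, i.e. $v_k = z_k$. The inductive hypothesis is thus the triple $u_k = x_k$, $v_k = z_k$, and $w_k = \frac{1}{\tau}\mathcal K x_k + z_k - \frac{1}{\tau}y_k$. The base case is arranged by initializing G-prox PDHG from the ADMM data, taking $u_0 = x_0$, $v_0 = z_0$, and $w_0 = \frac{1}{\tau}\mathcal K x_0 + z_0 - \frac{1}{\tau}y_0$.

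First I would handle the primal step. Expanding the quadratic in Step 1 of Algorithm \ref{alg-ADMM} and discarding terms independent of $x$, its objective equals $g(x) + \langle \mathcal K x,\, z_k - \gamma y_k\rangle + \frac{\gamma}{2}\|\mathcal K x\|^2$; doing the same for Step 1 of Algorithm \ref{alg-Gprox-PDHG} gives $g(u) + \langle \mathcal K u,\, w_k - \frac{1}{\tau}\mathcal K u_k\rangle + \frac{1}{2\tau}\|\mathcal K u\|^2$. Because $\frac{1}{2\tau} = \frac{\gamma}{2}$, the quadratic terms already agree, and using $u_k = x_k$ the two linear terms coincide exactly when $w_k - \gamma\mathcal K x_k = z_k - \gamma y_k$, which is precisely the inductive hypothesis on $w_k$. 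Hence the two minimization problems are identical and $x_{k+1} = u_{k+1}$.

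Next I would treat the dual update and the extrapolation. Completing the square in Step 2 of Algorithm \ref{alg-ADMM} identifies $y_{k+1} = \prox_f^{1/\gamma}\big(\mathcal K x_{k+1} + \frac{1}{\gamma}z_k\big)$, while Step 2 of Algorithm \ref{alg-Gprox-PDHG} becomes $v_{k+1} = \prox_{f^*}^{\sigma}\big(v_k + \sigma\mathcal K u_{k+1}\big) = \prox_{f^*}^{\gamma}\big(z_k + \gamma\mathcal K x_{k+1}\big)$. Applying Moreau's decomposition (Theorem \ref{eqn:pd_relation}(v)) with step size $1/\gamma$ at the point $\mathcal K x_{k+1} + \frac{1}{\gamma}z_k$ converts the first proximal evaluation into the second and yields $v_{k+1} = \gamma\mathcal K x_{k+1} + z_k - \gamma y_{k+1}$; comparing with Step 3 of Algorithm \ref{alg-ADMM}, namely $z_{k+1} = z_k - \gamma(y_{k+1} - \mathcal K x_{k+1})$, this is exactly $v_{k+1} = z_{k+1}$, closing the induction on the dual identification. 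Finally, from $w_{k+1} = 2v_{k+1} - v_k = 2z_{k+1} - z_k$ and $z_{k+1} - z_k = \gamma\mathcal K x_{k+1} - \gamma y_{k+1}$ one gets $w_{k+1} = z_{k+1} + \gamma\mathcal K x_{k+1} - \gamma y_{k+1} = \frac{1}{\tau}\mathcal K x_{k+1} + z_{k+1} - \frac{1}{\tau}y_{k+1}$, the change-of-variables relation at $k+1$.

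I expect the main obstacle to be the dual step: one must correctly recognize both updates as proximal operators with their respective step sizes and then invoke Moreau's decomposition with the matching step size $1/\gamma$, so that $\prox_f$ in ADMM and $\prox_{f^*}$ in G-prox PDHG are interchanged without a step-size mismatch. The remainder is careful bookkeeping of the substitutions $\tau = 1/\gamma$, $\sigma = \gamma$ and of the constants dropped when completing the square. It is also worth confirming that the two Step~1 objectives, being literally equal, define the same minimizer (or, equivalently, the same optimality inclusion), so that $x_{k+1} = u_{k+1}$ is genuine rather than an artifact of a particular argmin selection.
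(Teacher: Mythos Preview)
Your proof is correct. Both your argument and the paper's hinge on Moreau's decomposition for the dual step and straightforward algebra for the extrapolation, but you handle the primal step differently. The paper first proves an auxiliary lemma showing that the minimizer of $g(u)+\frac{\beta}{2}\|\mathcal K u-q\|^2$ satisfies $\beta(\mathcal K\hat u-q)=\prox_{g^*\circ(-\mathcal K^*)}^{\beta}(-\beta q)$, rewrites Step~1 of both algorithms in this proximal form, and then matches the two expressions via the change of variables. You bypass this by simply expanding the quadratics, dropping constants, and observing that the two Step~1 objectives coincide as functions of the minimization variable once $w_k-\tfrac{1}{\tau}\mathcal K u_k=z_k-\gamma y_k$. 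Your route is more elementary and self-contained for this theorem; the paper's prox reformulation is longer here but has the side benefit of exhibiting the link to DRS on the dual, which the paper exploits elsewhere. Your closing remark about the argmin being well defined is apt: since the two objectives are literally equal, the argmin sets coincide, so any consistent selection rule yields $x_{k+1}=u_{k+1}$.
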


% \vspace{-0.5cm}

\subsection{An Explicit Implementation Formula of G-prox PDHG}

For any vector $v= \begin{bmatrix}
        v^1  \hdots  v^d
    \end{bmatrix}^T \in [\R^N]^d$ with $v^i=\begin{bmatrix}
v^i_1  \hdots  v^i_N
    \end{bmatrix}^T \in \R^N$, let $v_j$ denote $v_j=\begin{bmatrix}
v^1_j & \cdots & v^d_j
    \end{bmatrix}^T \in \R^d$. Define $|v|:=\begin{bmatrix}
    \|v_1\|  \hdots  \|v_N\|
\end{bmatrix}^T \in \R^N$, and $\frac{v^i}{\max(1,|v|)}=\begin{bmatrix}
v^i_1/\max(1,\|v_1\|)  \hdots  v^i_N/\max(1,\|v_N\|)
    \end{bmatrix}^T\in \R^N.$ Then we define, $$\frac{v}{\max(1, |v|)}:=\begin{bmatrix}
    v^1/\max(1,|v|) \\ \vdots \\ v^d/\max(1,|v|)
\end{bmatrix}\in [\R^N]^d.$$
Let $\overline{\lambda}$ denote the complex conjugate of $\lambda$. For $w_k\in [\R^N]^d$, where $k$ will be the iteration index, we also denote it by $w_k=\begin{bmatrix}
    (w_k)^1 & \cdots & (w_k)^d
\end{bmatrix}^T$ with $(w_k)^i\in \R^N$
and $\widehat{(w_k)^i}$ being the d-dimensional discrete Fourier transform of $(w_k)^i$.
With the notation in Section \ref{sec-notation-3D}, 
for the TV compressed-sensing problem \eqref{eq:tvcs_primal-1}, Algorithm \ref{alg-Gprox-PDHG} can be explicitly implemented in Fourier space as described by Algorithm \ref{alg-Gprox-PDHG-2}. The derivation of Algorithm \ref{alg-Gprox-PDHG-2} will be given in Appendix \ref{appendix-implementation}. {For general convex functions \(f\), \(g\), and a linear operator \(\mathcal{K}\), the implementation of G-prox PDHG (Algorithm~\ref{alg-Gprox-PDHG}), which is equivalent to ADMM, can be difficult in practice. For the TVCS problem, its structure allows several simplifications. The first step of Algorithm~\ref{alg-Gprox-PDHG}   becomes explicit due to the facts that each block \(\mathcal{K}^i\) in \eqref{eq:K_multid} is diagonalizable by the \(d\)-dimensional DFT, and the constraint \(Au=b\) can be enforced in the Fourier domain. 
 Second, since \(f(v)=\|v\|_{1,2}\) admits a closed-form proximal operator, the second step of Algorithm~\ref{alg-Gprox-PDHG} has an explicit update.
Together, these properties make G-prox PDHG (or equivalently ADMM) explicitly implementable for TVCS. 
}

% The special properties that allow for an explicit implementation here are that $f(v)=\|v\|_{1,2}$ has a closed form solution for its proximal operator, and that $g(u)=\iota_{\{u: Au=b\}}(u)$ }

\begin{algorithm}[htbp]
\caption{An implementation formula of G-prox PDHG with a step size $\tau>0$ and $\sigma=\frac{1}{\tau}$ (or equivalently ADMM in Algorithm \ref{alg-ADMM} with $\gamma=\frac{1}{\tau}$) for TV norm compressed sensing.
\newline Initial guess: $u_0\in \R^N, v_0, w_0\in [\R^{N}]^d$
}
\label{alg-Gprox-PDHG-2}
\begin{algorithmic}[1]
  % \scriptsize
    \STATE 
$\begin{cases}
       \widehat{u_{k+1}}(\ell) &= b_{\ell}, \  \ \ell \in  \Omega \\
\widehat{u_{k+1}}(\ell) &=        \widehat{u_k}(\ell) - \tau \left[ \sum\limits_{i=1}^d \overline{\lambda^i_{\ell}}\widehat{(w_k)^i}(\ell)\right]/\left[\sum\limits_{i=1}^d|\lambda_{\ell}^i|^2\right], \  \ell \notin \Omega
\end{cases} $
    \STATE $v_{k+1} =  \frac{v_{k} + \sigma \mathcal K u_{k+1}}{\max (1, |v_{k} + \sigma \mathcal K u_{k+1}|)},$
    \STATE $w_{k+1} = 2 v_{k+1} -v_{k}$.
    \vspace{0.3cm}
\newline Notation:  $k$ is the iteration index and $\ell$ is the frequency index. For any $w_k\in [\R^{N}]^d$, let
$w_k=\begin{bmatrix}
    (w_k)^1 & \cdots & (w_k)^d
\end{bmatrix}^T$ with $(w_k)^i\in \R^N$,  then $\widehat{(w_k)^i}$ denotes the discrete Fourier transform of $(w_k)^i$, and $\widehat{(w_k)^i}(\ell)$ denotes the component of $\widehat{(w_k)^i}$ at the $\ell$-th frequency.
 As defined in Section \ref{sec-notation-3D}, $\lambda_{\ell}^i$ $(\ell=1,\cdots,N)$ are diagonal entries of  $\BLambda^i$.
 % \otimes \cdots \otimes\Lambda_{n_i}\otimes \cdots \otimes I_{n_d}$ where $\Lambda_{n_i}$ is the diagonal matrix with eigenvalues of the periodic forward difference matrix $K$ defined in \eqref{matrix-K} on its diagonal.    
\end{algorithmic}
\end{algorithm} 

\begin{table}[htbp!]
    \caption{A summary of equivalent variables in ADMM, DRS and G-prox PDHG algorithms with proper step sizes: variables in each row are equivalent.}
    \label{tab:relations}
    \vspace*{0.05 cm}
    \centering
    \begin{tabular}{|l|c|c|c|}
    \hline
Method         &  ADMM for  \eqref{eq:primala}  &  
Douglas-Rachford for \eqref{eq:doubleDual}  &   G-prox PDHG for \eqref{eq:primala} \\  \hline 
Formula         &  Alg. \ref{alg-ADMM}   for \eqref{eq:primala} &  
Alg. \ref{alg-DR-double-dual}   for \eqref{eq:doubleDual} &   Alg. \ref{alg-Gprox-PDHG} for \eqref{eq:primala} \\  \hline 
Step Size &$\gamma={\frac{1}{\tau}}$ & $\tau$ & ${\sigma = \frac{1}{\tau}}$ \\
\hline        Primal Iterate  & $\mathcal{K}x_k$  & $q_k - (q_{k-1}-v_{k-1})$ & $\mathcal{K}u_{k}$ \\ 
         \hline
         Dual Iterate & $z_k$ & $\frac{q_k-v_k}{\tau}$ & $p_k$ \\ \hline
         Extragradient & $\frac{1}{\tau}\mathcal{K}x_k +  z_k - \frac{1}{\tau}y_k$  & $ \frac{2q_k - q_{k-1}}{\tau} - \frac{2v_k - v_{k-1}}{\tau}$ & $w_k$\\[0.2ex] \hline
    \end{tabular}
\end{table}

% \vspace{-1cm}

\section{The Main Result on an Explicit Local Linear Rate}
\label{sec:main}

We prove the main result in this section. For simplicity,  we focus on the case $d=2$,  and extensions to higher dimensions are straightforward. 
\vspace{-0.5cm}
\subsection{DRS on the Equivalent Problem}
In order to analyze the local linear convergence of ADMM, we will utilize some of the equivalent formulations.  
Recall that TVCS problem \eqref{eq:tvcs_primal-1} can be written as the primal formulation
\eqref{eq:primala}, and its Fenchel dual formulation is given as
\eqref{eq:dual}. The dual formulation of \eqref{eq:dual} can be written as \eqref{eq:doubleDual}.
We first make a few remarks about total duality. We have strong duality between the primal and dual problem due to Slater's conditions, which are satisfied if $\exists \ x$ s.t. $x\in\mathrm{ri} (\dom f) = \R^N  $ and  $Ax=b$. { Where $\mathrm{ri}$ stands for the relative interior, the relative interior of a set $C\in \R^N$ is defined as $$\mathrm{ri}(C) = \{ x\in C: \mathcal{B}_r(x) \cap \mathrm{aff}(C) \subset C \ \mathrm{for} \ \mathrm{some} \ r>0 \}, $$ and $\mathrm{aff}(C)$ denotes the affine hull of the set $C$, i.e., the set of all affine combinations of points in $C$.  } For strong duality between \eqref{eq:dual} and \eqref{eq:doubleDual}, Slater's conditions
are satisfied by choosing $p=0\in \R^{2N}$ which implies $\|p\|_{\infty,2} < 1$, i.e $p = 0 \in \mathrm{ri}(\dom f^*)$, and $\mathcal K^*p \in \Im(A^*)$.     To show total duality, we need the existence of a solution of \eqref{eq:doubleDual}. By Theorem \ref{thm:uniqueness}, 
    under Assumption \ref{asmp:sizeOmega}, with high probability, \eqref{eq:doubleDual} has a unique minimizer.
    Thus total duality holds.

\vspace{-0.3cm}
\subsection{The Proximal Operators} \label{sec:linearConv}
For the two functions $f$ and $h$ in \eqref{eq:doubleDual}, we need their proximal operators for studying DRS.
Since the function $h(v)=\iota_{\mathcal K\{u:Au = b  \}}(v)$ is an indicator function to an affine set, the proximal operator is the Euclidean projection to the set. With the derivation shown in Appendix \ref{appendix-dual}, the projection formula can be given as
\begin{align}
\label{prox-h}
    \prox_h^{\tau}(q) &= \widetilde{\mathcal{F}}^* (I-\Mt^* \Mt)\Sigma \widetilde{\mathcal{F}}q + \widetilde{\mathcal{F}}^*\Mt^* \Mt \BLambda M^*b.
\end{align}
Here, $\Sigma = \BLambda (\BLambda^* \BLambda)^+ \BLambda^*$ and $ (\BLambda^* \BLambda)^+$ denotes the pseudo inverse of $\BLambda^* \BLambda$. Next, we discuss $S_{\tau}$.

\begin{definition}\label{def:nonLinearOperator}
     For any $q = [ q^1 \cdots q^d]^T \in [\R^{N}]^d$ with $q^i=[ q^i_1 \cdots q^i_N]^T\in \R^N$, 
which can also be represented by $q_j=[ q^1_j \cdots q^d_j]\in \R^d$ with a spatial index $j=1,\cdots, N$, define an operator $\mathcal{N}: [\R^{N}]^d \to [\R^{N}]^d$ via the spatial index as
    \begin{equation*}
       \mathcal{N}(q)_j = \begin{cases} 0, \ & \mathrm{if} \  q_j = 0 ,\\ 
\frac{q_j}{\|q_j\|} \ & \mathrm{otherwise}\end{cases}\in \R^d,\quad j=1,\cdots, N.
    \end{equation*}
\end{definition}
Recall that we have defined $B=[ e_{j_1}, \ ... \ , \ e_{j_r} ]^T \in \R^{r\times N}$ to be the selector matrix of the zero components of $v_*$.
For any $q\in \mathcal{Q}$, with $\mathcal Q$ in Definition \ref{def:qSpace},
it is straightforward to verify that the shrinkage operator can be written as
\begin{equation}
\label{shrinkage-2}
S_{\tau}(q) = (I-\widetilde{B}^+\widetilde{B})(q - \tau \mathcal{N}(q)), \quad \forall q\in \mathcal Q,\end{equation}
in which we regard $q$ and  $\mathcal N(q)$ as column vectors in $\R^{Nd}$.
  
\begin{lemma}\label{thm:KNA_NB_intersection}
    Under Assumption \ref{asmp:sizeOmega}, with probability $1 - O(N^{-s})$, $\mathcal K\ker(A) \cap \ker(\widetilde{B}) = \{0\}$, where   $\mathcal{K}\ker (A) = \{v\in \R^{N\times d} : v = \mathcal Ku, \ \   u\in \ker (A) \}$.
\end{lemma}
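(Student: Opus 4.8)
The plan is to reduce the claimed triviality of the intersection to a restricted-injectivity property of a column-restricted partial DFT, and then to invoke the probabilistic eigenvalue estimate that already underlies Theorem \ref{thm:uniqueness}.

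First I would unwind the two subspaces at the level of the spatial index. Write $T=\{j:(v_*)_j\neq 0\}$ for the spatial support of $v_*$, so that the $j_1,\dots,j_r$ defining $B$ are exactly the complementary indices. By the block structure \eqref{eq:bTildeMatrix}, $w\in\ker(\widetilde B)$ holds precisely when each component $w^i$ $(i=1,\dots,d)$ is supported on $T$. On the other side, any $w\in\mathcal K\ker(A)$ can be written $w=\mathcal K u$ with $Au=0$; using the Fourier diagonalization \eqref{discreteGrad-Fourier}, $\widehat{w^i}=\BLambda^i\hat u$, i.e. $\widehat{w^i}(\ell)=\lambda^i_\ell\,\hat u(\ell)$. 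Since $u\in\ker(A)$ forces $\hat u(\ell)=0$ for every $\ell\in\Omega$, it follows that $\widehat{w^i}$ vanishes on $\Omega$ for each $i$, without needing any further use of the range of $\mathcal K$.

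Combining the two descriptions, a vector $w$ in the intersection has, for each fixed $i$, a component $w^i$ that is supported on $T$ and whose DFT vanishes on $\Omega$. Letting $\mathcal F_{\Omega,T}$ denote the $m\times|T|$ submatrix of the $d$-dimensional DFT with rows indexed by $\Omega$ and columns by $T$, the relation $\widehat{w^i}|_\Omega=M\mathcal F w^i=\mathcal F_{\Omega,T}\,(w^i|_T)$ shows that the restriction $w^i|_T$ lies in $\ker(\mathcal F_{\Omega,T})$. Thus the whole statement collapses to showing that $\mathcal F_{\Omega,T}$ has trivial kernel: if so, $w^i|_T=0$ and hence $w^i=0$ for all $i$, giving $w=0$ as desired.

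The remaining, and genuinely substantive, step is the probabilistic one. For $\Omega$ chosen at random as in Assumption \ref{asmp:sizeOmega}, with $|\Omega|=m\gtrsim|\mathcal S|\log N\ge|T|\log N$, I would appeal to the minimum-eigenvalue estimate for random submatrices of the Fourier matrix from \cite{candes2006robust}, which asserts that $\mathcal F_{\Omega,T}^*\mathcal F_{\Omega,T}$ has its smallest eigenvalue bounded away from zero, and hence $\mathcal F_{\Omega,T}$ has full column rank, with probability $1-O(N^{-s})$. This is exactly the analytic core of Theorem \ref{thm:uniqueness}, and I would cite it for the single fixed support $T$ determined by $u_*$ (the nonuniform regime), which yields the stated failure probability directly. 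I expect this spectral estimate to be the only real obstacle: the reduction above is elementary, but the claim that a random $m$-row slice of the DFT injectively resolves $|T|$-sparse signals once $m\gtrsim|T|\log N$ is precisely what forces the logarithmic oversampling in Assumption \ref{asmp:sizeOmega}. The one bookkeeping point worth flagging is the relation between $|\mathcal S|$ (the number of nonzero scalar entries of $\mathcal K u_*$, lying between $|T|$ and $d|T|$) and $|T|$; since $|\mathcal S|\ge|T|$ the hypothesis on $m$ is more than sufficient, and the isotropic, multidimensional structure enters only through the fact that all $d$ components share the same spatial support constraint, so the scalar injectivity of $\mathcal F_{\Omega,T}$ suffices component by component.
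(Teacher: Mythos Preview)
Your proposal is correct and follows essentially the same route as the paper: both arguments show that any $z$ in the intersection has components supported on the spatial support of $v_*$ and whose DFTs vanish on $\Omega$, and then invoke the restricted injectivity of the partial Fourier matrix from \cite{candes2006robust} (the paper cites it as Theorem~3.1 there). Your component-by-component computation $\widehat{w^i}(\ell)=\lambda^i_\ell\,\hat u(\ell)$ is in fact a slightly cleaner way of reaching $\widetilde A z=0$ than the paper's matrix manipulation via \eqref{m-matrix_property}, and your remark that $|T|\le|\mathcal S|$ handles a bookkeeping point the paper leaves implicit.
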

\begin{proof}
   Consider any $z \in \mathcal{K}\ker(A) \cap \ker(\widetilde{B})$. First, $$z\in \mathcal{K}\ker(A)\Rightarrow z = \mathcal{K}u, u\in \ker(A).$$ By the fact that $MM^* = I_{m \times m }$ and the notation in \eqref{discreteGrad-Fourier} and \eqref{M-tilde},
\[ \widetilde{A}\mathcal K=\begin{pmatrix}
    M\mathcal F  & 0 \\
    0 & M\mathcal F
\end{pmatrix}  \begin{pmatrix}
        \mathcal F^* & 0\\
        0 & \mathcal F^*
    \end{pmatrix} \BLambda \mathcal{F}=\begin{pmatrix}
    M   & 0 \\
    0 & M 
\end{pmatrix} \BLambda \mathcal{F}=\widetilde{M}  \BLambda \mathcal{F}=(\widetilde{M} \widetilde{M^*})\widetilde{M} \BLambda \mathcal{F}. \]
By the property \eqref{m-matrix_property} and $u\in \ker(A)$, we have 
\begin{align*}
    \widetilde{A}z &= \widetilde{A}\mathcal Ku 
    = \widetilde{M} \widetilde{M^*}\widetilde{M}\BLambda \F u = \widetilde{M}\BLambda M^*M \F u = \widetilde{M}\BLambda M^*Au = 0.
\end{align*}
Second,
$z\in \ker(\widetilde{B})$ implies
 the support of $z$ is contained in the same support, $S$, as the unique solution $v_*$ to \eqref{eq:doubleDual}. Let $A_S$ denote the partial Fourier Transform restricted to signals with the support included in the set $S$.  Then
 $$\begin{pmatrix}
     A_S & 0 \\
     0 & A_S
 \end{pmatrix}z=\begin{pmatrix}
     A & 0 \\
     0 & A
 \end{pmatrix}z= \widetilde{A}z =  0.$$ By Theorem 3.1 in \cite{candes2006robust}, $A_S$ is injective, which implies $z = 0.$ 
\qed    
\end{proof}

\begin{remark}
For $\ell^1$-norm compressed sensing,  there are necessary \cite{zhang2015necessary} and  sufficient \cite{fuchs2004sparse} conditions to ensure a unique solution to \eqref{eq:tvcs_primal-1}, and the same techniques can be used to show $\mathcal{K} \ker(A)\cap \ker(\widetilde{B}) = \{0\}$ for one-dimensional TVCS problem, i.e.,  Problem \eqref{eq:tvcs_primal-1} with $d=1$. However, such a proof breaks down for $d\ge 2$. As shown in Lemma \ref{thm:KNA_NB_intersection} above, $\mathcal{K} \ker(A)\cap \ker(\widetilde{B}) = \{0\}$ can be ensured by the robust uncertainty principle in \cite{candes2006robust}.
\end{remark}

\subsection{Characterization of the Fixed Points to DRS}
 
For the function $h(v) =\iota_{\mathcal K\{u:Au = b  \}}(v)$, we have $$\partial h(q)  = \{q:\mathcal K^*q \in \Im(A^*)\} = (\mathcal K^*)^{-1}\big[\Im (A^*) \big],$$ 
where  $(\mathcal K^*)^{-1}\big[\Im (A^*)\big]$ denotes the pre-image of $\Im (A^*)$ under the operator $\mathcal K^*$.
By the optimality condition of \eqref{eq:doubleDual}, its minimizer $v_*$ satisfies $0\in \pd f(v_*) +(\mathcal K^*)^{-1}\big[\Im (A^*) \big]$, therefore $\pd f(v_*) \cap(\mathcal K^*)^{-1}\big[\Im (A^*) \big] \ne \emptyset$. 
Any vector $\eta \in \pd f(v_*) \cap(\mathcal K^*)^{-1}\big[\Im (A^*) \big]$ is called a {\it dual certificate}. The subgradient of $f = \| \cdot \|_{1,2}$ is given below as 
\begin{equation}\label{subderiv-f}
    \partial f(v_*) = \left\{ w \in \R^{Nd}: w_j \in \begin{cases}   \frac{(v_*)_j}{||(v_*)_j||} \quad & \mathrm{if} \ (v_*)_j \ne 0  \\ \mathcal{B}_1(0) \quad &\mathrm{else}\end{cases} \right\} .
\end{equation}

Theorem \ref{thm:uniqueness} (Theorem 1.5 in \cite{candes2006robust}) gives existence and uniqueness of the minimizer $v_*$, which implies the existence of a dual certificate.

\begin{lemma}\label{lem6}
    The set of fixed points of DRS iteration operator  $H_\tau=\frac{\iden+\refl_h^{\tau}\refl^{\tau}_f}{2}$ for the problem \eqref{eq:doubleDual} is given by:
    \begin{align*}
        \{v_* + \tau \eta : \eta \in \pd f(v_*) \cap(\mathcal K^*)^{-1}\big[\Im (A^*) \big]  \},
    \end{align*}
    and the fixed point is unique if and only if $(\mathcal K^*)^{-1}\big[\Im (A^*) \big]\cap \Im(\widetilde{B}^T) = \{ 0\}$ where $\widetilde{B}^T$ is the transpose matrix of  $\widetilde{B}$ with $\widetilde{B}$ defined in \eqref{eq:bTildeMatrix}.
\end{lemma}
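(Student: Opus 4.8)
The plan is to reduce the fixed point equation $H_\tau(q)=q$ to a pair of subgradient inclusions and then read off both assertions. First I would rewrite $H_\tau(q)=q$ as $\refl_h^{\tau}\refl_f^{\tau}(q)=q$, set $v:=\prox_f^{\tau}(q)$ so that $\refl_f^{\tau}(q)=2v-q$, and note that after applying $\prox_h^{\tau}$ the condition collapses to $\prox_h^{\tau}(2v-q)=v$. Invoking the first-order optimality characterization of the two proximal subproblems (Theorem \ref{eqn:pd_relation}(iv)), this is equivalent to $\frac{q-v}{\tau}\in\partial f(v)$ and $\frac{v-q}{\tau}\in\partial h(v)$ holding simultaneously. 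Writing $\eta:=\frac{q-v}{\tau}$, these say $\eta\in\partial f(v)$ and $-\eta\in\partial h(v)$; summing the inclusions gives $0\in\partial f(v)+\partial h(v)\subseteq\partial(f+h)(v)$, so $v$ solves \eqref{eq:doubleDual}, and by uniqueness in Theorem \ref{thm:uniqueness} we must have $v=v_*$. Thus every fixed point has the form $q=v_*+\tau\eta$.

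Next I would identify $\partial h(v_*)$. Since $h=\iota_C$ with $C=\mathcal K\{u:Au=b\}$ an affine set whose direction space is $\mathcal K\ker(A)$, the subgradient at the feasible point $v_*=\mathcal Ku_*$ is the normal cone, i.e. the orthogonal complement $(\mathcal K\ker(A))^\perp$. Using $\langle\mathcal Ku,\eta\rangle=\langle u,\mathcal K^*\eta\rangle$ together with $(\ker(A))^\perp=\Im(A^*)$, this complement equals $(\mathcal K^*)^{-1}[\Im(A^*)]$, matching the formula for $\partial h$ recorded just before the lemma. Because this set is a linear subspace, $-\eta\in\partial h(v_*)$ is the same as $\eta\in\partial h(v_*)$, so the fixed point set is exactly $\{v_*+\tau\eta:\eta\in\partial f(v_*)\cap(\mathcal K^*)^{-1}[\Im(A^*)]\}$, i.e. $v_*$ shifted by $\tau$ times a dual certificate. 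These steps are routine once the right optimality conditions are in place.

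For the uniqueness criterion, since $v_*$ is fixed the fixed point is unique iff the certificate $\eta$ is unique. Given two certificates $\eta_1,\eta_2$, the explicit form \eqref{subderiv-f} of $\partial f(v_*)$ forces $(\eta_i)_j=(v_*)_j/\|(v_*)_j\|$ on the support of $v_*$, so $\eta_1-\eta_2$ is supported only on the vanishing spatial indices of $v_*$, which is precisely $\Im(\widetilde{B}^T)$; since both lie in the subspace $(\mathcal K^*)^{-1}[\Im(A^*)]$, so does their difference. Hence $\eta_1-\eta_2\in(\mathcal K^*)^{-1}[\Im(A^*)]\cap\Im(\widetilde{B}^T)$, and triviality of this intersection gives $\eta_1=\eta_2$, proving the ``if'' direction. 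For the converse I would take a nonzero $z$ in the intersection and perturb a certificate along $z$: since $z\in\Im(\widetilde{B}^T)$ it alters only the vanishing blocks (leaving the support blocks at their forced value), and since $z$ lies in the subspace the perturbed vector stays in $(\mathcal K^*)^{-1}[\Im(A^*)]$.

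The main obstacle is exactly this converse perturbation step: to keep $\eta+tz$ inside $\partial f(v_*)$ I need its vanishing-index blocks to remain in the closed unit balls of \eqref{subderiv-f}, which can fail for an arbitrary certificate whose off-support blocks already sit on the unit sphere (in particular when $z$ is tangent there). I would resolve this by perturbing a strict (nondegenerate) dual certificate, one with $\|\eta_j\|<1$ strictly at every vanishing index; such a certificate is provided by the dual-certificate construction underlying Theorem \ref{thm:uniqueness} under Assumption \ref{asmp:sizeOmega}. For small $t$ its off-support blocks stay strictly inside the balls, so $\eta+tz\in\partial f(v_*)$ and the two certificates are genuinely distinct, completing the ``only if'' direction.
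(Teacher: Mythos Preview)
Your argument is correct and follows a cleaner, more abstract route than the paper's. For the characterization of fixed points, the paper verifies directly that $v_*+\tau\eta$ is fixed by an explicit matrix computation with the projection formula \eqref{prox-h} for $\prox_h^\tau$, and for the converse invokes convergence of DRS to force $\prox_f^\tau(q)=v_*$; you instead reduce $H_\tau(q)=q$ symmetrically to the pair of resolvent inclusions $\eta\in\partial f(v)$, $-\eta\in\partial h(v)$, which handles both directions at once and bypasses the matrix algebra entirely. For uniqueness, the paper only shows that any difference of two certificates lies in $(\mathcal K^*)^{-1}[\Im(A^*)]\cap\Im(\widetilde B^T)$ and then asserts the ``if and only if'' without justifying the converse; you correctly flag this gap and close it by perturbing a strict dual certificate furnished by the construction behind Theorem~\ref{thm:uniqueness} under Assumption~\ref{asmp:sizeOmega}. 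Your route thus proves slightly more than the paper's, at the cost of importing strict-certificate existence from the probabilistic setting; note that without such a certificate the ``only if'' direction can genuinely fail for $d\ge 2$ (a boundary certificate with a purely tangential $z$-block cannot be perturbed in either sign), so your caution here is warranted rather than excessive.
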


\begin{proof}
Consider any $\eta \in \pd f(v_*) \cap(\mathcal K^*)^{-1}\big[\Im (A^*) \big] $. First, since $S_{\tau}$ is the proximal operator of $f(v)$, $\eta \in \pd f(v_*)$  implies   $S_{\tau}(v_* + \tau \eta) = v_*$. Second, by \eqref{discretediv-Fourier} and $A=M\mathcal F$,   we have 
$$\eta \in(\mathcal K^*)^{-1}\big[\Im (A^*) \big]\Rightarrow 
\mathcal K^*\eta\in \Im (A^*)
\Rightarrow  {\mathcal F}^*\BLambda^*  \widetilde{\mathcal F} \eta\in \Im (\mathcal F^* M^*)$$
\[\Rightarrow  \BLambda^*\widetilde{\mathcal F}\eta \in \Im (M^*)
\Rightarrow (I-M^*M)\BLambda^*  \widetilde{\mathcal{F}}\eta = 0.\]
By  \eqref{m-matrix_property} and \eqref{m-matrix_property2}, we have
$$(I-\Mt^* \Mt)\BLambda (\BLambda^* \BLambda)^+ \BLambda^*= \BLambda(\BLambda^* \BLambda)^+(I-M^*M)\BLambda^*$$
\[ \Rightarrow(I-\Mt^* \Mt)\BLambda (\BLambda^* \BLambda)^+ \BLambda^* \widetilde{\mathcal{F}}\eta =\BLambda(\BLambda^* \BLambda)^+(I-M^*M)\BLambda^*  \widetilde{\mathcal{F}}\eta = 0. \]
Since $v_*=\mathcal K u_*$   and $Au_*=b$, by \eqref{prox-h} and \eqref{discreteGrad-Fourier}, we have
\begin{align*}
     \prox_h^{\tau}(v_*-\tau \eta ) 
    = &\widetilde{\mathcal{F}}^* (I-\Mt^* \Mt)\BLambda (\BLambda^* \BLambda)^+ \BLambda^* \widetilde{\mathcal{F}}(v_*-\tau \eta) + \widetilde{\mathcal{F}}^*\Mt^* \Mt \BLambda M^*b \\
     =& \widetilde{\mathcal{F}}^* (I-\Mt^* \Mt)\BLambda (\BLambda^* \BLambda)^+ \BLambda^* \widetilde{\mathcal{F}}\mathcal K u_* + \widetilde{\mathcal{F}}^*\Mt^* \Mt \BLambda M^*A u_* \\
     = & \widetilde{\mathcal{F}}^* (I-\Mt^* \Mt)\BLambda(\mathcal F u_*) + \widetilde{\mathcal{F}}^* \Mt^* \Mt \BLambda \mathcal F u_* = \widetilde{\mathcal{F}}^*\BLambda \mathcal F u_* = \mathcal Ku_* = v_*.
\end{align*}
Moreover, $\prox_f^{\tau}(v_* + \tau \eta)  = v_*$ implies   $\refl_f^\tau (v_* + \tau \eta)=v_* - \tau \eta$. Thus,
\begin{align*}
    H_{\tau}(v_* + \tau \eta) &= \prox_h^{\tau}(\refl_f^\tau (v_* + \tau \eta)) + v_* + \tau \eta - \prox_f^{\tau}(v_* + \tau \eta) \\
    &= \prox_h^{\tau}(v_* - \tau \eta) + \tau \eta= v_* + \tau \eta.
\end{align*}

Conversely, suppose $H_{\tau}(q) = q$ and define $\eta = \frac{q - v_*}{\tau}$. We want to show $\eta \in \pd f(v_*) \cap (\mathcal K^*)^{-1}\big[\Im (A^*)\big]$. 
By the convergence of the DRS iteration \cite{lions1979splitting},  $\prox_f^{\tau}(q) = v_*$, which implies that $\eta = \frac{q-v_*}{\tau} \in \pd f(v_*)$. Second, $H_{\tau}(q) = q$ and $\prox_f^{\tau}(q) = v_*$ implies $v_* = \prox_h^{\tau}(2v_* -q)$, which gives $-\eta \in \pd h(v_*) = (\mathcal K^*)^{-1}\big[\Im (A^*)\big]$ thus $\eta \in (\mathcal K^*)^{-1}\big[\Im (A^*)\big]$. 

To discuss uniqueness, let $q_1=v_*+\tau \eta_1, q_2=v_*+\tau \eta_2$ be two fixed points of $H_{\tau}$. Then $q_1 - q_2 = \tau (\eta_1 - \eta_2)$, where $\eta_1, \eta_2 \in\partial f(v_*)\cap  (\mathcal K^*)^{-1}\big[\Im (A^*)\big]$. From \eqref{subderiv-f}, note that $\eta_1,
\eta_2 \in \pd f(v_*)$  implies that $\pm (\eta_1- \eta_2) \in \Im(\widetilde{B}^T)$. Hence, $q_1 - q_2  \in (\mathcal K^*)^{-1}\big[\Im (A^*)\big] \cap \Im(\widetilde{B}^T)$, so  the fixed point is unique if and only if   $(\mathcal K^*)^{-1}\big[\Im (A^*)\big] \cap \Im(\widetilde{B}^T) = \{0 \}$. 
    \qed
\end{proof}

\subsection{Characterization of the DRS operator $H_{\tau}$}

Next, we estimate the nonlinear DRS operator $H_{\tau}$.

\begin{lemma}\label{lem7} For any fixed point $q_*$ of $H_\tau=\frac{\iden+ \refl^\tau_h\refl^\tau_f}{2}$, it satisfies 
    $$\|(I-\widetilde{B}^+\widetilde{B})\mathcal{N}(q)-\mathcal{N}(q_*)\| \le \max_{j:\|(v_*)_j\|\ne 0}\frac{2}{\|(v_*)_j\|} \|q-q_*\|,\quad \forall q\in [\R^{N}]^d \backsimeq \R^{Nd},$$
    where  $\|\cdot\|$ is the 2-norm in $\R^{Nd}$.
\end{lemma}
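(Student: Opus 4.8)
The plan is to reduce the inequality to a per-spatial-index bound on the normalization map $x\mapsto x/\|x\|$. Since $B$ selects the coordinates where $v_*$ vanishes and has orthonormal rows ($B^+=B^T$), the matrix $\widetilde{B}^+\widetilde{B}$ is the orthogonal projection onto the ``zero'' spatial indices, so $I-\widetilde{B}^+\widetilde{B}$ is the orthogonal projection onto the support indices $S=\{j:(v_*)_j\neq 0\}$, acting blockwise in $j$: it keeps each block $\mathcal{N}(q)_j$ with $j\in S$ and annihilates each block with $j\notin S$. Writing $\|\cdot\|^2$ as a sum over spatial indices, the left-hand side reduces to a sum over $j\in S$, provided $\mathcal{N}(q_*)$ is itself supported on $S$; this is exactly the structure inherited from the shrinkage formula \eqref{shrinkage-2}, where the same projection multiplies both $q$ and $q_*$.

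Second, I would pin down $\mathcal{N}(q_*)$ using Lemma \ref{lem6}. A fixed point has the form $q_*=v_*+\tau\eta$ with $\eta\in\pd f(v_*)$, and DRS convergence gives $\prox_f^\tau(q_*)=S_\tau(q_*)=v_*$. For $j\in S$ this forces $(q_*)_j$ to be a positive multiple of $(v_*)_j$ with $\|(q_*)_j\|=\|(v_*)_j\|+\tau$, so $\mathcal{N}(q_*)_j=(v_*)_j/\|(v_*)_j\|$ on $S$; the off-support contribution is eliminated by the same projection (via \eqref{shrinkage-2}), reducing the quantity to $\sum_{j\in S}\|\mathcal{N}(q)_j-\mathcal{N}(q_*)_j\|^2$.

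Third, the core is an elementary bound on the normalization map: for nonzero $x,y\in\R^d$,
\[
\left\|\frac{x}{\|x\|}-\frac{y}{\|y\|}\right\|\le \frac{2\|x-y\|}{\max(\|x\|,\|y\|)},
\]
which follows by writing $x\|y\|-y\|x\|=x(\|y\|-\|x\|)+(x-y)\|x\|$ (and its symmetric counterpart) and applying the reverse triangle inequality; the degenerate case $q_j=0$ is checked directly, since then $\|\mathcal{N}(q)_j-\mathcal{N}(q_*)_j\|=1\le 2\|(q_*)_j\|/\|(v_*)_j\|$. Applying this with $y=(q_*)_j$ and using $\|(q_*)_j\|=\|(v_*)_j\|+\tau\ge\|(v_*)_j\|$ gives $\|\mathcal{N}(q)_j-\mathcal{N}(q_*)_j\|\le 2\|q_j-(q_*)_j\|/\|(v_*)_j\|$ for each $j\in S$. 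Summing over $j\in S$, factoring out $\max_{j\in S}2/\|(v_*)_j\|$, and bounding $\sum_{j\in S}\|q_j-(q_*)_j\|^2\le\|q-q_*\|^2$ yields the claim.

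The main obstacle I expect is the second step: making the projection genuinely collapse the expression to a support-only sum, i.e. correctly accounting for the off-support behavior of $\mathcal{N}(q_*)$ so that no uncontrolled unit-vector term survives. Once that structural reduction is secured, the normalization inequality in the third step is routine, and the dependence on $\tau$ (which reappears as $2\tau/\|(v_*)_j\|$ in Theorem \ref{thm-main}) enters only later, through the factor $\tau$ multiplying $\mathcal{N}$ in \eqref{shrinkage-2}.
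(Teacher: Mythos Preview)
Your proposal is correct and matches the paper's approach: reduce to a sum over support indices, use the fixed-point form $(q_*)_j=(v_*)_j+\tau(v_*)_j/\|(v_*)_j\|$ (from Lemma~\ref{lem6}) to get $\|(q_*)_j\|\ge\|(v_*)_j\|$, and apply the elementary normalization inequality (the paper uses $\|a/\|a\|-b/\|b\|\|\le 2\|a-b\|/\|a\|$ rather than your symmetric $\max$ version, but either suffices). The off-support obstacle you flag is handled in the paper simply by asserting the spatial-index decomposition directly, consistent with the projection applying to the full difference $\mathcal N(q)-\mathcal N(q_*)$ as it does in Lemma~\ref{lem8}.
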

\begin{proof}
By Definition \ref{def:nonLinearOperator} and the definition of $\widetilde B$ in \eqref{eq:bTildeMatrix}, we have
\begin{align*}
    &\|(I-\widetilde{B}^+\widetilde{B})\N (q)-\N(q_*)\|^2  = 
    % =& \sum_{i:(v_*) \ne 0} \|\N(q)_i-\N(q_*)_i\|^2 = 
    \sum_{i: (v_*)_i\ne 0} \left  \|\frac{q_i}{\|q_i\|} - \frac{(q_*)_i}{\|(q_*)_i\|} \right \|^2.
\end{align*}
For any nonzero $a, b\in \R^d$, we have 
$\left\|\frac{a}{\|a\|}-\frac{b}{\|b\|} \right\| \leq     \left\|\frac{a}{\|a\|}-\frac{b}{\|a\|} \right\|+  \left\|\frac{b}{\|a\|}-\frac{b}{\|b\|} \right\|   =  \frac{1}{\|a\|}  \left\|a-b \right\|+ \|b\|  \left| \frac{  \|b\|-\|a\| }{\|a\| \|b\|} \right| 
 =  \frac{1}{\|a\|} \Big(  \left\|a-b \right\|+   \big| \|b\|-\|a\|  \big|  \Big) \leq   \frac{2}{\|a\|}  \left\|a-b \right\|.$
By Lemma \ref{lem6}, $q_*=v_*+\tau \eta$ for a dual certificate $\eta$. 
For any index $i$ satisfying $(v_*)_i \ne 0$, we have
$q_i^* = (v_*)_i + \tau \frac{(v_*)_i}{\|(v_*)_i\|}$, which is implied by $\eta\in \partial \|v_*\|_{1,2}$. Hence, $\|(q_*)_i\| \ge \|(v_*)_i\|$. If we also use the inequality above with $a = q_i, \ b = (q_*)_i$, we obtain $ \left  \|\frac{q_i}{\|q_i\|} - \frac{(q_*)_i}{\|(q_*)_i\|} \right \|\le \frac{2}{\|(v_*)_i\|}\|q_i - (q_*)_i\|.$ 
   \qed
\end{proof}

\begin{lemma}\label{lem8}
    For any $q\in \mathcal{Q}$   
    and any DRS fixed point $q_*$,  
        $$H_{\tau}(q) - H_{\tau}(q_*) = \widetilde{H} (q - q_*) + \tau \Big[ (I-2C)(I-\widetilde{B}^+\widetilde{B}) \Big] (\mathcal{N}(q) - \mathcal{N}(q_*)),$$ where $\widetilde{H} = \Big[C(I-\widetilde{B}^+\widetilde{B}) +(I-C)\widetilde{B}^+\widetilde{B} \Big]$  and $C = \widetilde{\mathcal{F}}^* (I-\Mt^* \Mt)\BLambda (\BLambda^* \BLambda)^+ \BLambda^* \widetilde{\mathcal{F}}.$
\end{lemma}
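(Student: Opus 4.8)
The plan is to prove this identity by direct computation, exploiting the affine structure of $\prox_h^\tau$ together with the closed-form expression \eqref{shrinkage-2} for the shrinkage operator on $\mathcal{Q}$. First I would record two facts. From \eqref{prox-h}, the projection $\prox_h^\tau$ is an affine map $\prox_h^\tau(x) = Cx + c_0$ with $C = \widetilde{\mathcal{F}}^*(I-\Mt^*\Mt)\BLambda(\BLambda^*\BLambda)^+\BLambda^*\widetilde{\mathcal{F}}$ and constant vector $c_0 = \widetilde{\mathcal{F}}^*\Mt^*\Mt\BLambda M^* b$; this $C$ is exactly the one appearing in the statement. Second, writing $q_* = v_* + \tau\eta$ from Lemma \ref{lem6}, I would check that $q_* \in \mathcal{Q}$ so that \eqref{shrinkage-2} is available at both $q$ and $q_*$: for indices $j$ with $(v_*)_j \neq 0$ one has $(q_*)_j = (v_*)_j + \tau (v_*)_j/\|(v_*)_j\|$, giving $\|(q_*)_j\| > \tau$, while for $(v_*)_j = 0$ the certificate satisfies $\|\eta_j\| \le 1$, giving $\|(q_*)_j\| \le \tau$; hence $S_\tau(q_*)$ has precisely the support of $v_*$.

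Next I would rewrite the DRS operator. Using $H_\tau(q) = \prox_h^\tau(\refl_f^\tau(q)) + q - \prox_f^\tau(q)$ and $\refl_f^\tau(q) = 2S_\tau(q) - q$, together with the affine form of $\prox_h^\tau$, I obtain
\begin{equation*}
H_\tau(q) = (2C - I)\,S_\tau(q) + (I - C)\,q + c_0.
\end{equation*}
Substituting \eqref{shrinkage-2}, namely $S_\tau(q) = (I - \widetilde{B}^+\widetilde{B})(q - \tau\,\mathcal{N}(q))$, and forming the difference $H_\tau(q) - H_\tau(q_*)$ (in which the constant $c_0$ cancels) splits the result into a term linear in $q - q_*$ and a term in $\mathcal{N}(q) - \mathcal{N}(q_*)$.

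Finally I would collect the two pieces. Writing $P = I - \widetilde{B}^+\widetilde{B}$, the coefficient of $q - q_*$ is $(2C - I)P + (I - C)$, and a short expansion ($= C - 2C\widetilde{B}^+\widetilde{B} + \widetilde{B}^+\widetilde{B}$) shows this equals $C P + (I - C)\widetilde{B}^+\widetilde{B} = \widetilde{H}$; the coefficient of $\mathcal{N}(q) - \mathcal{N}(q_*)$ is $-\tau(2C - I)P = \tau(I - 2C)(I - \widetilde{B}^+\widetilde{B})$, giving precisely the claimed nonlinear term. Since every step is an exact identity, no estimation obstacle arises; the only point requiring care is the verification that $q_*$ lies in $\mathcal{Q}$, so that the non-smooth operator $S_\tau$ admits the same linear-plus-nonlinear decomposition \eqref{shrinkage-2} at both $q$ and $q_*$ with identical support structure.
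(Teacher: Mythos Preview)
Your proposal is correct and follows essentially the same approach as the paper: both arguments write $\prox_h^\tau$ in its affine form $Cq+\text{const}$, substitute the shrinkage expression \eqref{shrinkage-2} valid on $\mathcal Q$, and then collect the linear and nonlinear pieces in the difference $H_\tau(q)-H_\tau(q_*)$. Your extra step verifying $q_*\in\mathcal Q$ is a useful clarification that the paper handles only later (in the proof of the main theorem), but otherwise the computations are identical up to the cosmetic choice of first writing $H_\tau(q)=(2C-I)S_\tau(q)+(I-C)q+c_0$.
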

\begin{proof} By \eqref{shrinkage-2}, we have
$\prox_f^{\tau}(q) =  S_{\tau}(q)  = (I-\widetilde{B}^+\widetilde{B})\big(q - \tau \mathcal{N}(q)\big),$
        thus $\refl_f^{\tau}(q)=2(I-\widetilde{B}^+\widetilde{B})\big(q - \tau \mathcal{N}(q)\big)-q.$
 By \eqref{prox-h} and $C$ in Lemma \ref{lem8}, we have
 $\prox_h^{\tau}(q) = C q +\widetilde{b},\quad  \widetilde{b}=\widetilde{\mathcal{F}}^*\Mt^* \Mt \BLambda M^*b.$ Hence,
 
        \begin{align*}
    H_{\tau}(q) &= \prox_h^{\tau}\big(\refl_f^{\tau}(q)\big) + q - \prox_f^{\tau}(q)\\
                &= C\big[ (I-2\Bt^+\Bt)q - 2\tau (I-\Bt^+\Bt)\mathcal{N}(q) \big] + 
\widetilde{b} +\widetilde{B}^+\widetilde{B}q + \tau (I-\widetilde{B}^+\widetilde{B})\mathcal{N}(q),
\end{align*}
and
\begin{align*}
    & H_{\tau}(q) - H_{\tau}(q_*) \\
    % = &C \Big[ (I-2\Bt^+\Bt)(q-q_*) - 2\tau(I-\Bt^+\Bt) \big(\mathcal{N}(q) - \mathcal{N}(q_*) \big)  \Big] \\
    % &+ \Bt^+\Bt(q-q_*) + \tau (I-\Bt^+\Bt)(\mathcal{N}(q)-\mathcal{N}(q_*)) \\
     =& \Big[C(I-\widetilde{B}^+\widetilde{B}) +(I-C)\widetilde{B}^+\widetilde{B} \Big] (q - q_*) + \tau \Big[ (I-2C)(I-\widetilde{B}^+\widetilde{B}) \Big] (\mathcal{N}(q) - \mathcal{N}(q_*)) \\ 
     =& \widetilde{H}(q - q_*) + \tau \Big[ (I-C)(I-\widetilde{B}^+\widetilde{B}) - C(I-\Bt^+\Bt) \Big] (\mathcal{N}(q) - \mathcal{N}(q_*)). \quad   
\end{align*}
This concludes the proof. \qed
\end{proof}

Notice that $C$ in Lemma \ref{lem8} is the projection matrix onto $\mathcal K \ker(A)$, and $I-C$ is the projection matrix onto $( \mathcal K^*)^{-1}\big[\Im (A^*)\big]$. Since $B$ and $\widetilde B$ in \eqref{eq:bTildeMatrix} are also projection matrices, we may rewrite them as follows. 
Define $C_0$ as the $2N \times(N-m)$ matrix whose columns form an orthonormal bases of $\mathcal K\ker (A)$, and $C_1$ the $2N \times (N+m)$ matrix whose columns form an orthonormal bases of $(\mathcal K^*)^{-1}\big[\Im(A^*)\big]$. Similarly we define the $2N\times 2(N-r)$ matrix $B_0$ and $2N\times 2r$ matrix  $B_1$ to be the matrices whose columns are orthonormal bases of $\ker(\Bt)$ and $\Im (\Bt^*)$ respectively. Therefore, we have 
\begin{equation}
\label{property-B-C}
    C_0C_0^*+C_1C_1^*=I,\quad B_0B_0^*+B_1B_1^*=I,
\end{equation}
and we can rewrite expression in Lemma \ref{lem8} as
\begin{align*}
    H_{\tau}(q) - H_{\tau}(q_*) = &\big[C_0C_0^*B_0B_0^* + C_1C_1^*B_1B_1^*\big](q-q_*) \\
    &+ \tau \big[ C_1C_1^*B_0B_0^* - C_0C_0^*B_0B_0^*\big](\mathcal{N}(q) - \mathcal{N}(q_*)). \notag
\end{align*}

{We note that the matrix {$C_0C_0^*B_0B_0^* + C_1C_1^*B_1B_1^*$} is similar to the matrix analyzed in \cite[Eq.~(2.5)]{demanet2016eventual}, which underlies their local convergence rate estimate. It will also play an important role in our estimate so we define 
\begin{equation}\label{eq:hTilde}
  {\widetilde{H} = C_0C_0^*B_0B_0^* + C_1C_1^*B_1B_1^*.}
\end{equation}
}

\begin{definition}  \cite{bjorck1973numerical}
     Let $\mathcal{U}$ and $\mathcal{V}$ be two subspaces of a linear space  with $\mathrm{dim}(\mathcal{U}) = s \le \mathrm{dim}(\mathcal{V})$. The principal angles $\theta_k \in [0,\frac{\pi}{2}]$ $(k=1, \ \dots,\  p)$ between $\mathcal{U}$ and $\mathcal{V}$, and principal vectors vectors $u_j$ and $v_j$ are defined recursively as
    \begin{equation*}
        \cos \theta_k = \max_{u \in \mathcal{U}} \max_{v \in \mathcal{V}} \langle u_k, v_k \rangle, \|u\|  = \|v\|  = 1,    \ \langle u_k, u_j \rangle  = \langle v_k, v_j \rangle  = 0,  \ \forall j < k. 
    \end{equation*}  
\end{definition}

Without loss of generality, assume $N-m \le 2(N-r)$. Let $\theta_i$  ($i=1,\  \hdots, \ N-m$) be the principal angles between $\mathcal K\ker(A)$ and $\ker(\Bt)$. Then $\theta_1 >0$ since $\mathcal K\ker(A) \cap \ker(\Bt) = \{0\}$ by Lemma \ref{thm:KNA_NB_intersection}. Let $\cos \Theta$ be the $(N-m)\times(N-m)$ diagonal matrix with diagonal entries $(\cos\theta_1,\ \hdots, \ \cos \theta_{N-m})$. By \cite[Theorem 1]{bjorck1973numerical}, the Singular Value Decomposition  (SVD) of the $(N-m)\times2(N-r)$ matrix $E_0 = C_0^*B_0$ is $E_0 = U_0\cos\Theta V^*$, with $V^*V =U_0^*U_0=U_0U_0^*= I_{(N-m)}$, and columns of $C_0U_0$ and $B_0V$ give the principal vectors. By the definition of SVD, $V$ is a matrix of size $2(N-r)\times(N-m)$, with orthonormal columns. Let $V'$ be a matrix of size $2(N-r)\times (N-2r+m)$ whose columns are normalized and orthogonal to columns of $V$. 
Define $\widetilde{V} = ( V | V')$, then $\widetilde{V} $ is a unitary matrix of size $2(N-r)\times 2(N-r)$. Now consider $E_1 = C_1^*B_0$, then by \eqref{property-B-C} we have
\begin{align*}
    E_1^*E_1 = B_0^*C_1C_1^*B_0 &= B_0^*B_0 - B_0^*C_0C_0^*B_0  = I_{(2N-2r)} - E_0^*E_0  \\
     &= I_{(2N-2r)} - V\cos^2\Theta {V}^* =\widetilde{V} \begin{pmatrix}
         \sin^2 \Theta & 0 \\
         0 & I_{(N-2r+m)}
     \end{pmatrix}\widetilde{V}^*,
\end{align*}
which implies   
  the SVD  $E_1 = U_1\begin{pmatrix}
         \sin \Theta & 0 \\
         0 & I_{(N-2r+m)}
     \end{pmatrix}\widetilde{V}^*$. Thus we have
\begin{align*}
    E_0E_0^*&=U_0\cos^2\Theta U_0^*, \quad E_1E_1^*= U_1\begin{pmatrix}
         \sin^2 \Theta & 0 \\
         0 & I_{(N-2r+m)}
     \end{pmatrix}U_1^*,\\
  E_1E_0^*&=U_1 \left( \begin{array}{ c}
          \sin\Theta\cos\Theta  \\ 0   
    \end{array} \right)  U_0^*,\quad  E_0E_1^*=U_0\left( \begin{array}{ c|c}
          \cos\Theta \sin\Theta & 0 \\  
    \end{array} \right) U_1^*.
\end{align*} 
Notice that $B_0=(C_0C_0^*+C_1C_1^*)B_0=C_0E_0+C_1E_1$, so we obtain 
\begin{align*}
    B_0B_0^* &=(C_0 | C_1 ) \left( \begin{array}{c|c}
        E_0E_0^* & E_0E_1^* \\ \hline
        E_1E_0^* &  E_1E_1^*
    \end{array} \right) (C_0| C_1)^* \\
    &= (C_0U_0 | C_1 U_1) \left( \begin{array}{c|cc}
        \cos^2\Theta & \cos\Theta \sin\Theta & 0 \\ \hline
        \sin \Theta \cos\Theta  & \sin^2\Theta & 0\\
        0 & 0 & I_{(N-2r+m)} 
    \end{array} \right) (C_0U_0 | C_1U_1)^*.
\end{align*}
Define $\Ct_0 = C_0U_0$ and $\Ct_1 = C_1U_1$, which are $2N\times (N-m)$ and $2N\times 2(N-r)$ matrices respectively. Then the columns of $\Ct_0$ form an orthonormal basis of $\mathcal K\ker(A)$, and columns of $\Ct_1$ are orthonormal vectors in $(\mathcal K^*)^{-1}\big[\Im(A^*)\big]$. Let $D$ denote the orthogonal complement of $(\mathcal K^*)^{-1}\big[\Im(A^*)\big] \cap \Im (\Bt^*)$ in the subspace $(\mathcal K^*)^{-1}\big[\Im(A^*)\big]$. Then $\mathrm{dim}(D)=2(N-r)$. Since $\theta_1 >0$, the largest angle between $(\mathcal K^*)^{-1}\big[\Im(A^*)\big]$ and $\ker(\Bt)$ is less than $\pi/2$. So none of the column vectors of $\Ct_1$ are orthogonal to $\ker(\Bt)$. Hence, by counting the dimension of $D$ and columns of $\Ct_1$, we conclude that columns of $\Ct_1$ form an orthonormal basis of $D$. Define $\Ct_2$ to be the $2N\times (m+2r-N)$ matrix whose columns form an orthonormal basis of $\Im (\Bt^*) \cap (\mathcal K^*)^{-1}\big[\Im(A^*)\big]$. Then,
\begin{align}\label{eq:B0}
    B_0B_0^* = (\Ct_0 | \Ct_1 | \Ct_2) \left( \begin{array}{c|cc|c}
        \cos^2\Theta & \cos\Theta \sin\Theta & 0  & 0\\ \hline
        \sin \Theta \cos\Theta  & \sin^2\Theta & 0 & 0\\
        0 & 0 & I_{(N-2r+m)} & 0 \\ \hline
        0 & 0 & 0 & 0
    \end{array} \right) (\Ct_0 | \Ct_1 | \Ct_2)^*.
\end{align}
By \eqref{property-B-C}, $ B_1B_1^* =I- B_0B_0^*$, thus
\begin{align}\label{eq:B1}
    B_1B_1^* = (\Ct_0 | \Ct_1 | \Ct_2) \left( \begin{array}{c|cc|c}
        \sin^2\Theta & -\cos\Theta \sin\Theta & 0  & 0\\ \hline
        -\sin \Theta \cos\Theta  & \cos^2\Theta & 0 & 0\\
        0 & 0 & 0 & 0 \\ \hline
        0 & 0 & 0 & I_{(m+2r-N)}
    \end{array} \right) (\Ct_0 | \Ct_1 | \Ct_2)^*.
\end{align}

{Notice that $C_0C_0^*$ and $C_1C_1^*$ are, relatively, the projection matrices to the spaces $\mathcal K \ker(A)$ and $(\mathcal K^*)^{-1}\big[\Im(A^*)\big]$. Therefore, since the columns of $\Ct_0$ lie in $\mathcal K \ker(A)$, whereas the columns of $\Ct_1$ and $\Ct_2$ lie in $(\mathcal K^*)^{-1}\big[\Im(A^*)\big]$ we obtain, 
\begin{equation}\label{c0_ortho}
    C_0C_0^*(\Ct_0 | \Ct_1 | \Ct_2) = (\Ct_0 | 0 | 0), \quad \ C_1C_1^*(\Ct_0 | \Ct_1 | \Ct_2) = (0 | \Ct_1 | \Ct_2).
\end{equation}}

{Now start from \eqref{eq:hTilde} and use equations \eqref{eq:B0}, \eqref{eq:B1}, and \eqref{c0_ortho} to obtain  }
\begin{align} 
 % \notag  \Ht &= C_0C_0^*B_0B_0^* + C_1C_1^*B_1B_1^* \\
   \Ht
   &= (\Ct_0 | \Ct_1 | \Ct_2) \left( \begin{array}{c|cc|c}
        \cos^2\Theta & \cos\Theta \sin\Theta & 0  & 0\\ \hline
        -\sin \Theta \cos\Theta  & \cos^2\Theta & 0 & 0\\
        0 & 0 & 0 & 0 \\ \hline
        0 & 0 & 0 & I_{(m+2r-N)}
    \end{array} \right) (\Ct_0 | \Ct_1 | \Ct_2)^*.\label{eq:H_op}
\end{align}
{Similarly, one can derive}
\begin{align*}
    &C_1C_1^*B_0B_0^* - C_0C_0^*B_0B_0^* \\
    =& (\Ct_0 | \Ct_1 | \Ct_2) \left( \begin{array}{c|cc|c}
        -\cos^2\Theta & -\cos\Theta \sin\Theta & 0  & 0\\ \hline
        \sin \Theta \cos\Theta  & \sin^2\Theta & 0 & 0\\
        0 & 0 & I_{(N-2r+m)} & 0 \\ \hline
        0 & 0 & 0 & 0
    \end{array} \right) (\Ct_0 | \Ct_1 | \Ct_2)^*.
\end{align*}
  We now summarize the discussion above as the following result.
\begin{lemma}\label{lem-DRS-expression}
   For any $q\in \mathcal{Q}$ 
    and any DRS fixed point $q_*$,    
    \begin{align*}
     & H_{\tau}(q) - H_{\tau}(q_*) =\Ct \left( \begin{array}{c|cc|c}
        \cos^2\Theta & \cos\Theta \sin\Theta & 0  & 0\\ \hline
        -\sin \Theta \cos\Theta  & \cos^2\Theta & 0 & 0\\
        0 & 0 & 0 & 0 \\ \hline
        0 & 0 & 0 & I_{(m+2r-N)}
    \end{array} \right) \Ct^*(q-q_*)  \\ 
    &+\Ct  \left( \begin{array}{c|cc|c}
        -\cos^2\Theta & -\cos\Theta \sin\Theta & 0  & 0\\ \hline
        \sin \Theta \cos\Theta  & \sin^2\Theta & 0 & 0\\
        0 & 0 & I_{(N-2r+m)} & 0 \\ \hline
        0 & 0 & 0 & 0
    \end{array} \right)\Ct^* \tau\big[ \mathcal{N}(q) - \mathcal{N}(q_*)\big],
\end{align*}
where $\Ct = (\Ct_0|\Ct_1 | \Ct_2)$. $\Ct_0$, $\Ct_1$, and $\Ct_2$ are the matrices whose columns form an orthonormal basis of $\mathcal K\ker(A)$, $(\mathcal K^*)^{-1}\big[\Im(A^*)\big]$, and $\Im (\Bt^*) \cap (\mathcal K^*)^{-1}\big[\Im(A^*)\big]$ respectively.
\end{lemma}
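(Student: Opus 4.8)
The plan is to read this lemma as the bookkeeping step that assembles the block diagonalizations already obtained above, so essentially no new analysis is required. I would begin from the formula in Lemma \ref{lem8},
\[
H_{\tau}(q) - H_{\tau}(q_*) = \Ht\,(q - q_*) + \tau\big[(I-2C)(I-\Bt^+\Bt)\big]\big(\N(q) - \N(q_*)\big),
\]
and translate each orthogonal projection into its orthonormal-basis factorization: $C = C_0C_0^*$ projects onto $\mathcal K\ker(A)$, $I-C = C_1C_1^*$ onto $(\mathcal K^*)^{-1}\big[\Im(A^*)\big]$, $I-\Bt^+\Bt = B_0B_0^*$ onto $\ker(\Bt)$, and $\Bt^+\Bt = B_1B_1^*$ onto $\Im(\Bt^*)$. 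Using $C_0C_0^* + C_1C_1^* = I$ from \eqref{property-B-C}, the nonlinear coefficient collapses to $(I-2C)(I-\Bt^+\Bt) = C_1C_1^*B_0B_0^* - C_0C_0^*B_0B_0^*$, which is exactly the matrix whose $\Ct$-basis form is displayed immediately before the statement, while $\Ht = C_0C_0^*B_0B_0^* + C_1C_1^*B_1B_1^*$ is the matrix given in $\Ct$-basis form by \eqref{eq:H_op}.

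The second step is simply to substitute these two representations. Both are written as $\Ct(\,\cdot\,)\Ct^*$ against the common $2N\times 2N$ unitary $\Ct = (\Ct_0\mid\Ct_1\mid\Ct_2)$ with the identical four-block partition of sizes $N-m$, $N-m$, $N-2r+m$, and $m+2r-N$; here the $2(N-r)$ columns of $\Ct_1$ split into the $N-m$ directions coupled to $\Ct_0$ through the principal angles $\Theta$ and the remaining $N-2r+m$ directions on which $B_0B_0^*$ acts as the identity. Because the partition is shared, I can factor $\Ct$ and $\Ct^*$ out of both terms and read the two $4\times4$ block matrices directly off \eqref{eq:H_op} and the preceding display, leaving $\N(q)-\N(q_*)$ and $q-q_*$ untouched. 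This yields the asserted expression verbatim.

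The only genuinely delicate point, and where I expect the real obstacle to be, is justifying that $\Ct$ is a bona fide unitary matrix with the stated block structure; everything else is substitution. This rests on three ingredients already assembled above: the orthogonality of $\mathcal K\ker(A)$ and $(\mathcal K^*)^{-1}\big[\Im(A^*)\big]$, so that the columns of $\Ct_0$ are orthogonal to those of $\Ct_1$ and $\Ct_2$; the dimension count $(N-m)+2(N-r)+(m+2r-N)=2N$; and the fact that $\theta_1>0$, guaranteed by Lemma \ref{thm:KNA_NB_intersection}, which forces the columns of $\Ct_1$ to span exactly the orthogonal complement $D$ of $\Im(\Bt^*)\cap(\mathcal K^*)^{-1}\big[\Im(A^*)\big]$ inside $(\mathcal K^*)^{-1}\big[\Im(A^*)\big]$. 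Since the SVD of $E_0 = C_0^*B_0$ has already produced $\cos\Theta$, $\sin\Theta$ and the refined partition in \eqref{eq:B0}--\eqref{eq:H_op}, the last thing to verify is the consistency of the block orderings across those displays, after which the lemma follows immediately.
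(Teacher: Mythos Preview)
Your proposal is correct and follows essentially the same route as the paper: the lemma is presented there as a summary of the preceding discussion, so the proof really is the substitution you describe, starting from Lemma~\ref{lem8}, rewriting $(I-2C)(I-\Bt^+\Bt)=C_1C_1^*B_0B_0^*-C_0C_0^*B_0B_0^*$, and inserting the $\Ct$-basis block forms from \eqref{eq:B0}--\eqref{eq:H_op} and \eqref{c0_ortho}. Your identification of the one delicate point---that $\Ct$ is genuinely unitary, resting on the dimension count, the orthogonality of $\mathcal K\ker(A)$ and $(\mathcal K^*)^{-1}[\Im(A^*)]$, and the fact (from $\theta_1>0$) that the columns of $\Ct_1$ span exactly $D$---matches the paper's justification as well.
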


\begin{lemma}
\label{C2-subspace-Lemma}
    Assume   DRS iterates $q_k$ converge to an interior fixed point $q_*$. Then there exists $K \in \mathbb{N}$ such that for all $k\ge K$, 
     $\mathbb{P}(q_k- q_*) = 0,$
    where $\mathbb{P}$ is the Euclidean projection to $\Im(\widetilde{B}^*) \cap (\mathcal K^*)^{-1}[\Im(A^*)]$.
\end{lemma}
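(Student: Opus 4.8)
The plan is to read off an invariance directly from the block representation of the DRS operator in Lemma~\ref{lem-DRS-expression}. First I would make the projection concrete: because the columns of $\widetilde{C}_2$ form an orthonormal basis of $\Im(\widetilde{B}^*) \cap (\mathcal K^*)^{-1}[\Im(A^*)]$, the projection in the statement is $\mathbb{P} = \widetilde{C}_2\widetilde{C}_2^*$. The crucial structural fact is that in Lemma~\ref{lem-DRS-expression} the $\widetilde{C}_2$-diagonal block of the first matrix is $I_{(m+2r-N)}$ while the $\widetilde{C}_2$-block of the second matrix (the one multiplying $\mathcal{N}(q)-\mathcal{N}(q_*)$) is zero; thus $H_\tau$ leaves the $\widetilde{C}_2$-component of $q-q_*$ untouched.

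Second, I would use the interior fixed point hypothesis to force the iterates into $\mathcal{Q}$. Since $q_*$ lies in the interior of $\mathcal{Q}$ (Definition~\ref{def:qSpace}), some open ball around $q_*$ is contained in $\mathcal{Q}$; as $q_k \to q_*$, there is a $K$ with $q_k \in \mathcal{Q}$ for all $k \ge K$, which is exactly what is required to apply Lemma~\ref{lem-DRS-expression} at $q = q_k$.

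Third, for $k \ge K$ I would write $q_{k+1}-q_* = H_\tau(q_k) - H_\tau(q_*)$ using $H_\tau(q_*)=q_*$, apply $\mathbb{P}=\widetilde{C}_2\widetilde{C}_2^*$ to the identity of Lemma~\ref{lem-DRS-expression}, and invoke the mutual orthonormality of the column families $\widetilde{C}_0,\widetilde{C}_1,\widetilde{C}_2$ (which follows since $\mathcal K\ker(A)$ and $(\mathcal K^*)^{-1}[\Im(A^*)]$ are orthogonal complements and $\widetilde{C}_1,\widetilde{C}_2$ are built orthonormally). Reading off the blocks, the first term contributes $\mathbb{P}(q_k-q_*)$ and the $\mathcal{N}$-term contributes $0$, giving the invariance $\mathbb{P}(q_{k+1}-q_*) = \mathbb{P}(q_k-q_*)$. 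Iterating, $\mathbb{P}(q_k-q_*)$ is constant for $k \ge K$; since $q_k \to q_*$ forces $\mathbb{P}(q_k-q_*)\to 0$, that constant must be $0$, which yields $\mathbb{P}(q_k-q_*)=0$ for all $k \ge K$.

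The step I expect to be the main obstacle is isolating the $\widetilde{C}_2$-invariance without residual coupling: one must verify that after projecting, the off-diagonal blocks linking $\widetilde{C}_2$ to $\widetilde{C}_0$ and $\widetilde{C}_1$ drop out, which rests entirely on the orthonormality of the three bases, and that the $\mathcal{N}$-term genuinely vanishes on the $\widetilde{C}_2$-block. The other delicate point is the justification that the iterates enter $\mathcal{Q}$ — this is precisely where the interior fixed point assumption is indispensable, since for a boundary fixed point Lemma~\ref{lem-DRS-expression} would not apply and the invariance, and hence the conclusion, could fail.
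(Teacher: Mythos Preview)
Your proposal is correct and follows essentially the same route as the paper: use the interior fixed point hypothesis to force $q_k\in\mathcal{Q}$ for $k\ge K$, then read off from the block representation in Lemma~\ref{lem-DRS-expression} that the $\widetilde{C}_2$-component of $q_k-q_*$ is invariant under $H_\tau$, and conclude from $q_k\to q_*$ that this constant must vanish. The paper's proof is terser (it cites Lemmas~\ref{lem8} and~\ref{lem-DRS-expression} together and argues by contradiction), but the substance is identical to yours.
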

\begin{proof}
Since $q_*$ is in the interior of $\mathcal Q$, there exists $K$ such that $q_k \in \mathcal Q$ for all $k\geq K$.
 Since columns of $\Ct_2$  span the subspace $\Im(\widetilde{B}^*) \cap (\mathcal K^*)^{-1}[\Im(A^*)]$,   Lemma \ref{lem-DRS-expression} and Lemma \ref{lem8} imply that $ \mathbb{P}(q_k- q_*) = \mathbb{P}(q_K- q_*)$ for all $k\geq K$.
 If $\mathbb{P}(q_K- q_*)\ne 0$,
 then  $\mathbb{P}(q_k- q_*)$ is a constant {vector} for $k\geq K$, which contradicts $q_k\to q_*$. So $\mathbb{P}(q_K- q_*)=0$, which implies $\mathbb{P}(q_k- q_*)=0$ for any $k\geq K$. \qed 
\end{proof}

\vspace{-0.5cm}
\subsection{The Proof of the Main Theorem}

Now we are ready to prove Theorem \ref{thm-main}.

\begin{proof}

First of all, by Definition \ref{def:qSpace} and Lemma \ref{lem6}, any DRS fixed point is in the set $\mathcal Q$. 
The convexity of the problem \eqref{eq:doubleDual}
ensures that DRS iterates converges to the minimizer $v_*$, i.e.,  $q_k$ converges to some fixed point $q_*$ to DRS and $S_\tau (q_*)=\prox_f^\tau (q_*)=v_*$. 
For a standard problem, $q_*$ {belongs to the interior of the set $\mathcal Q$}. We first discuss a simple case that  $\Im(\Bt^*) \cap (\mathcal K^*)^{-1}\big[\Im(A^*)\big] = \{0\} $. By Lemma \ref{lem6} and the definition of $\widetilde{C}_2$, we deduce that the fixed point is unique and $m + 2r = N$.
  Notice   \eqref{eq:H_op} shows that $\| \widetilde{H} \|_2 = \cos \theta_1$, where $\|\widetilde{H}\|_2$ denotes the matrix spectral norm. 
For any $q\in \mathcal Q$,   by the fact that $C$ is a projection matrix and  Lemma \ref{lem7}, we have $\|[ (I-2C)(I-\widetilde{B}^+\widetilde{B}) ] (\mathcal{N}(q) - \mathcal{N}(q_*))\|\leq \max_{j:\|(v_*)_j\|\ne 0}\frac{2}{\|(v_*)_j\|} \|q-q_*\|$, thus by the triangle inequality
\begin{align*}
    \|H_{\tau}(q) - H_{\tau}(q_*)\| &\le \|\widetilde{H}\|_2 \|q-q_*\| + \tau \|[ (I-2C)(I-\widetilde{B}^+\widetilde{B}) ] (\mathcal{N}(q) - \mathcal{N}(q_*))\| \\
     &\leq \left(\cos\theta_1+ \max_{j:\|(v_*)_j\|\ne 0}\frac{2\tau}{\|(v_*)_j\|} \right)\|q - q_* \|.
\end{align*}
Since $q_k$ converges to $q_*$ and $q_*$ is in the interior of $\mathcal Q$,
there exists $K$ such that for all $k\ge K$, $q_k\in \mathcal Q$.
Hence, there exists $K$ such that for all $k\ge N$, we have 
$$\|H_{\tau}(q_k) - H_{\tau}(q_*)\| \le \left(\cos(\theta_1) + \max_{j:\|(v_*)_j\|\ne 0}\frac{2\tau}{\|(v_*)_j\|}\right)^{k-K}\|q_K - q_*\|.$$ 

Now we consider the case when $\Im(\Bt^*) \cap (\mathcal K^*)^{-1}\big[\Im(A^*)\big] \ne \{0\} $ for which DRS fixed points are not unique by Lemma \ref{lem6}.   By Lemma \ref{C2-subspace-Lemma}, there exists $K$  such that $\mathbb{P}_{\Im(\widetilde{B}^*) \cap (\mathcal K^*)^{-1}[\Im(A^*)]}(q_k- q_*) = 0,\  \forall k\geq K.$ 
Since $\widetilde{C}_2$ is the basis for $\Im(\widetilde{B}^*) \cap (\mathcal K^*)^{-1}[\Im(A^*)]$, the decomposition in \eqref{eq:H_op} implies  $\|\widetilde{H}(q_k-q_*)\| \le \cos \theta_1 \|q_k-q_*\|$ for $k\geq K$, thus
\begin{align*}
   \|H_{\tau}(q_k) - H_{\tau}(q_*)\| &\le \|\widetilde{H}(q_k-q_*)\| + \tau \|[ (I-2C)(I-\widetilde{B}^+\widetilde{B}) ] (\mathcal{N}(q_k) - \mathcal{N}(q_*))\| \\ 
     &\leq  \left(\cos\theta_1+ \max_{j:\|(v_*)_j\|\ne 0}\frac{2\tau}{\|(v_*)_j\|} \right)\|q_k - q_* \|.
\end{align*}
This concludes the proof.\qed
\end{proof}

\begin{remark}
{ Both $\theta_1$ and $v_*$ depend on the minimizer, 
thus it is in general very difficult to find an a priori range of $\tau$ such that 
$$ \cos\theta_1 + \max_{j:\|(v_*)_j\|\ne 0} \frac{2\tau}{\|(v_*)_j\|_2} <1,$$
although a very small $\tau>0$ suffices.
In our numerical tests reported, we first find a numerical minimizer by performing many iterations of ADMM. Then, we compute its $\theta_1$ and $v_*$, with which we plotted figures such as Figure \ref{fig:intro-linear}. }
\end{remark}

\subsection{Possible extensions}

{The more general DRS operator can be written as $H^\lambda_\tau=(1-\lambda) \iden + \lambda\frac{\iden+\refl_h^\tau\refl_f^\tau }{2}$ with a relaxation parameter $\lambda\in (0,2)$. Since $H^\lambda_\tau$ is very similar to $H_\tau$, it is straightforward to extend Theorem \ref{thm-main} for generalized ADMM. An outline of the argument is provided below. \begin{lemma}\label{lem_generalized}
    The set of fixed points of the generalized DRS iteration operator, $H^{\lambda}_\tau = (1-\lambda)I + \lambda \frac{I + \refl_h^\tau \refl_f^\tau}{2}$,
for the problem~\eqref{eq:doubleDual} coincides with the set of fixed points of the standard DRS iteration:
\[
\big\{\, v_* + \tau \eta : \eta \in \partial f(v_*) \cap (\mathcal K^*)^{-1}[\Im(A^*)] \,\big\}.
\]
Moreover, the fixed point is unique if and only if $
(\mathcal K^*)^{-1}[\Im(A^*)] \cap \Im(\widetilde B^{\,T}) = \{0\},$
where $\widetilde B^{\,T}$ is the transpose of the matrix $\widetilde B$ defined in~\eqref{eq:bTildeMatrix}.
\end{lemma}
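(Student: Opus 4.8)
The plan is to reduce the claim entirely to Lemma~\ref{lem6} by observing that the relaxation parameter $\lambda$ leaves the fixed-point set unchanged. First I would write the generalized operator in the form $H^\lambda_\tau = (1-\lambda)\iden + \lambda H_\tau$, where $H_\tau = \frac{\iden + \refl_h^\tau \refl_f^\tau}{2}$ is the standard DRS iteration operator from Lemma~\ref{lem6}. A point $q$ is fixed by $H^\lambda_\tau$ precisely when $(1-\lambda)q + \lambda H_\tau(q) = q$, which rearranges to the single relation $\lambda\big(H_\tau(q) - q\big) = 0$.

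The decisive step is then to divide out $\lambda$. Since the admissible range is $\lambda \in (0,2)$, we have $\lambda \neq 0$, so the equation above is equivalent to $H_\tau(q) = q$. Consequently the fixed-point set of $H^\lambda_\tau$ coincides exactly with that of $H_\tau$ for every admissible $\lambda$; no inclusion is lost in either direction.

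With this equivalence in hand, both assertions follow immediately by invoking Lemma~\ref{lem6} verbatim. The explicit description $\big\{v_* + \tau\eta : \eta \in \partial f(v_*) \cap (\mathcal K^*)^{-1}[\Im(A^*)]\big\}$ and the uniqueness criterion $(\mathcal K^*)^{-1}[\Im(A^*)] \cap \Im(\widetilde{B}^{\,T}) = \{0\}$ are already proven there for $H_\tau$, and since the two operators share the same fixed points, these conclusions transfer directly to $H^\lambda_\tau$.

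I do not anticipate a genuine obstacle here; the entire content is the algebraic reduction $\lambda(H_\tau(q)-q)=0 \iff H_\tau(q)=q$. The only point deserving explicit care is confirming that $\lambda$ stays nonzero over the whole interval $(0,2)$, which is what guarantees the two fixed-point equations are genuinely equivalent rather than one merely implying the other. Once that is noted, everything else is a direct citation of Lemma~\ref{lem6}.
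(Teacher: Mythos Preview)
Your proposal is correct and is in fact cleaner than the paper's own argument. You proceed by the purely algebraic observation that $H^\lambda_\tau(q)=q$ is equivalent to $\lambda(H_\tau(q)-q)=0$, hence to $H_\tau(q)=q$ since $\lambda\in(0,2)$, and then you invoke Lemma~\ref{lem6} as a black box. The paper instead reruns the computation of Lemma~\ref{lem6} directly for $H^\lambda_\tau$: for the forward inclusion it verifies by hand that $H^\lambda_\tau(v_*+\tau\eta)=v_*+\tau\eta$ using the identities $\prox_f^\tau(v_*+\tau\eta)=v_*$ and $\prox_h^\tau(v_*-\tau\eta)=v_*$ already established in Lemma~\ref{lem6}; for the converse it appeals to the convergence theory of the \emph{generalized} DRS iteration from \cite{lions1979splitting} to conclude $\prox_f^\tau(q)=v_*$ at a fixed point, and then derives $v_*=\prox_h^\tau(2v_*-q)$ from the fixed-point equation. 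Your route sidesteps the need to invoke any convergence result for the relaxed iteration specifically, since the reduction to $H_\tau(q)=q$ lets you inherit the converse direction from Lemma~\ref{lem6} (which only needs convergence of the unrelaxed DRS). Both arguments are short, but yours makes the logical structure more transparent: the relaxation parameter is genuinely irrelevant to the fixed-point set, and this is a one-line consequence of $\lambda\neq 0$.
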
 
\begin{proof}
From the proof of Lemma \ref{lem6}, for any $\eta \in \pd f(v_*) \cap(\mathcal K^*)^{-1}\big[\Im (A^*) \big]$, then $S_{\tau}(v_* + \tau \eta) = v_*$, $\prox_h^{\tau}(v_*-\tau \eta) = v_* $, and $\refl_f^{\tau}(v_*+\tau \eta) = v_*-\tau \eta$. Hence, 
\begin{align*}
    H_{\tau}^{\lambda}(v_*+\tau \eta) &= v_*+\tau \eta + \lambda \left[  \prox_h^{\tau}(\refl_f^{\tau}(v_*+\tau \eta)) - \prox_f^{\tau}(v_*+\tau \eta) \right] \\ 
    &= v_*+\tau \eta + \lambda \left[ \prox_h^{\tau}(v_*-\tau \eta) - v_*  \right] = v_* +\tau \eta.
\end{align*}
Conversely, if $H_{\tau}^{\lambda}(q) = q$ and $\eta = \frac{q-v_*}{\tau}$, then once again by the convergence of the generalized DRS iteration \cite{lions1979splitting}, $\prox_f^{\tau}(q) = v_*,$ thus $\eta \in \partial f(v_*)$. Secondly, $H_{\tau}^{\lambda}(q) = q$ implies that {$v_* = \prox_h^{\tau}(2v_*-q)$}, which shows $\eta \in (\mathcal K^*)^{-1}\big[\Im (A^*)\big]$. For the discussion on uniqueness, reference the proof in Lemma \ref{lem6}.
\end{proof} 
The next two Lemmas are the generalized versions of Lemmas \ref{lem8} and \ref{lem-DRS-expression}.
\begin{lemma}\label{lem-gDRS-expression}
    For any $q\in\mathcal{Q}$ and any generalized DRS fixed point $q_*$,
    \begin{align*}
        H^{\lambda}_{\tau}(q) - H^{\lambda}_{\tau}(q_*) = \widetilde{H}^{\lambda}(q-q_*) +\tau \lambda \left[ I-2C(I-\Bt^+\Bt)\right]( \mathcal{N}(q) - \mathcal{N}(q_*)),
    \end{align*}
    where $\widetilde{H}^{\lambda} = (1-\lambda) I + \lambda \widetilde{H}$, and $\widetilde{H}$ and $C$ are defined in Lemma \ref{lem8}.
\end{lemma}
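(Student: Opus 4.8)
The plan is to exploit the affine structure of the relaxed operator. Writing $H^\lambda_\tau = (1-\lambda)\iden + \lambda H_\tau$ with $H_\tau=\frac{\iden+\refl_h^\tau\refl_f^\tau}{2}$, the identity term contributes linearly, so for any $q$ and any fixed point $q_*$ one has
\begin{equation*}
  H^\lambda_\tau(q) - H^\lambda_\tau(q_*) = (1-\lambda)(q-q_*) + \lambda\big(H_\tau(q) - H_\tau(q_*)\big).
\end{equation*}
What makes this reduction legitimate is Lemma \ref{lem_generalized}: the fixed-point set of $H^\lambda_\tau$ coincides with that of $H_\tau$, so the same $q_*$ is a fixed point of the \emph{standard} DRS operator, and Lemma \ref{lem8} applies verbatim to the bracketed term.

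Next I would substitute the identity from Lemma \ref{lem8}, which is available because $q\in\mathcal Q$ makes the closed-form shrinkage \eqref{shrinkage-2} valid,
\begin{equation*}
  H_\tau(q) - H_\tau(q_*) = \widetilde{H}(q-q_*) + \tau\big[(I-2C)(I-\widetilde{B}^+\widetilde{B})\big]\big(\mathcal{N}(q)-\mathcal{N}(q_*)\big),
\end{equation*}
and then separate the linear and nonlinear contributions. The linear part collapses to $\big[(1-\lambda)I+\lambda\widetilde{H}\big](q-q_*)$, which is exactly $\widetilde{H}^\lambda(q-q_*)$ by the definition $\widetilde{H}^\lambda=(1-\lambda)I+\lambda\widetilde{H}$; the nonlinear part inherits the coefficient of Lemma \ref{lem8} scaled by $\lambda$, namely $\tau\lambda\big[(I-2C)(I-\widetilde{B}^+\widetilde{B})\big]\big(\mathcal{N}(q)-\mathcal{N}(q_*)\big)$. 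This delivers the stated identity.

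The algebra here is light; the step that deserves the most care is the justification for invoking Lemma \ref{lem8} at a \emph{relaxed} fixed point. One must read off from Lemma \ref{lem_generalized} that $q_*=v_*+\tau\eta$ with $\eta\in\partial f(v_*)\cap(\mathcal K^*)^{-1}[\Im(A^*)]$, so that $\prox_f^\tau(q_*)=v_*$ and $\prox_h^\tau(\refl_f^\tau(q_*))=v_*$ hold exactly as in the original computation. As a self-contained alternative that sidesteps the fixed-point characterization, I would instead expand $H^\lambda_\tau(q)=q+\lambda\big(\prox_h^\tau(\refl_f^\tau(q))-\prox_f^\tau(q)\big)$ directly, using $\prox_h^\tau(q)=Cq+\widetilde b$ and $\prox_f^\tau(q)=(I-\widetilde{B}^+\widetilde{B})(q-\tau\mathcal{N}(q))$ on $\mathcal Q$; the affine constant $\widetilde b$ cancels in the difference, and the identity $C(I-2\widetilde{B}^+\widetilde{B})-I+\widetilde{B}^+\widetilde{B}=\widetilde{H}-I$ reproduces $\widetilde{H}^\lambda$ on the linear part. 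Either route is routine once Lemma \ref{lem8} is in hand.
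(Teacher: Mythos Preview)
Your argument is correct and is essentially the paper's own proof, which merely says to repeat the computation of Lemma~\ref{lem8} for $H_\tau^\lambda$; your reduction via $H^\lambda_\tau=(1-\lambda)\iden+\lambda H_\tau$ together with Lemma~\ref{lem_generalized} is the natural shortcut to that same computation. One remark: your derivation produces the nonlinear coefficient $(I-2C)(I-\widetilde{B}^+\widetilde{B})$, in agreement with Lemma~\ref{lem8} and with the block form in Lemma~\ref{lem-grs_struct}; the bracketing $[I-2C(I-\widetilde{B}^+\widetilde{B})]$ printed in the statement is a parenthesization slip in the paper, not an error in your argument.
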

\begin{proof}
    The proof follows the same steps as the proof of Lemma~\ref{lem8}, applied to the operator $H_{\tau}^{\lambda}$.
\end{proof}
\begin{lemma}\label{lem-grs_struct}
   For any $q\in \mathcal{Q}$ 
    and any generalized DRS fixed point $q_*$,   {\small
    \begin{align*}
    & H_{\tau}^{\lambda}(q) - H_{\tau}^{\lambda}(q_*) = \\& \Ct \left( \begin{array}{c|cc|c}
        \cos^2\Theta + (1-\lambda)\sin^2\Theta & \lambda\cos\Theta  \sin\Theta & 0  & 0\\ \hline
        -\lambda \sin \Theta \cos\Theta  & \cos^2\Theta + (1-\lambda)\sin^2\Theta & 0 & 0\\
        0 & 0 & (1-\lambda) I_{(N-2r+m)} & 0 \\ \hline
        0 & 0 & 0 & I_{(m+2r-N)}
    \end{array} \right) \Ct^*(q-q_*)  \\ 
    &+\Ct  \left( \begin{array}{c|cc|c}
        -\cos^2\Theta & -\cos\Theta \sin\Theta & 0  & 0\\ \hline
        \sin \Theta \cos\Theta  & \sin^2\Theta & 0 & 0\\
        0 & 0 & I_{(N-2r+m)} & 0 \\ \hline
        0 & 0 & 0 & 0
    \end{array} \right)\Ct^* \lambda\tau\left( \mathcal{N}(q) - \mathcal{N}(q_*)\right),
\end{align*}
}
where $\Ct = (\Ct_0|\Ct_1 | \Ct_2)$. $\Ct_0$, $\Ct_1$, and $\Ct_2$ are the matrices whose columns form an orthonormal basis of $\mathcal K\ker(A)$, $(\mathcal K^*)^{-1}\big[\Im(A^*)\big]$, and $\Im (\Bt^*) \cap (\mathcal K^*)^{-1}\big[\Im(A^*)\big]$ respectively.
\end{lemma}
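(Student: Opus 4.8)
The plan is to obtain the stated $\widetilde C$-basis formula directly from the affine combination defining $H^\lambda_\tau$, rather than re-running the block computation of Lemma~\ref{lem8} from scratch. First I would use Lemma~\ref{lem_generalized}, which guarantees that the generalized DRS fixed points coincide with the ordinary DRS fixed points; hence any generalized fixed point $q_*$ is an admissible $q_*$ for Lemma~\ref{lem-DRS-expression}. Writing $H^\lambda_\tau=(1-\lambda)\iden+\lambda H_\tau$ gives
\begin{align*}
 H^\lambda_\tau(q)-H^\lambda_\tau(q_*) = (1-\lambda)(q-q_*) + \lambda\big[H_\tau(q)-H_\tau(q_*)\big],
\end{align*}
into which I would substitute the two-term expression of Lemma~\ref{lem-DRS-expression} for $H_\tau(q)-H_\tau(q_*)$.

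To merge the leading $(1-\lambda)(q-q_*)$ term into the same $\widetilde C(\cdot)\widetilde C^*$ form, I need $\widetilde C=(\widetilde C_0\mid\widetilde C_1\mid\widetilde C_2)$ to be a genuine orthonormal basis of $\R^{2N}$, i.e. $\widetilde C\widetilde C^*=\iden$. This is already implicit in the construction: adding the block representations \eqref{eq:B0} and \eqref{eq:B1} and invoking $B_0B_0^*+B_1B_1^*=\iden$ from \eqref{property-B-C} yields $\widetilde C\widetilde C^*=\iden$, since the four diagonal blocks sum to the identity via $\cos^2\Theta+\sin^2\Theta=\iden$ and the off-diagonal $\pm\cos\Theta\sin\Theta$ cancel. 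Consequently $(1-\lambda)(q-q_*)=\widetilde C\,[(1-\lambda)\iden]\,\widetilde C^*(q-q_*)$, and the entire linear part collapses to $\widetilde C\,[(1-\lambda)\iden+\lambda M_1]\,\widetilde C^*(q-q_*)$, where $M_1$ denotes the first block matrix of Lemma~\ref{lem-DRS-expression}.

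The only genuine computation is then the block-by-block evaluation of $(1-\lambda)\iden+\lambda M_1$. On each of the two $(N-m)$-blocks the diagonal becomes $(1-\lambda)\iden+\lambda\cos^2\Theta$, which I would rewrite using $\iden=\cos^2\Theta+\sin^2\Theta$ as $\cos^2\Theta+(1-\lambda)\sin^2\Theta$, exactly the diagonal entries claimed; the off-diagonals carry only the factor $\lambda$ from $M_1$, giving $\pm\lambda\cos\Theta\sin\Theta$. On the $I_{(N-2r+m)}$-block $M_1$ contributes nothing, so the entry is $(1-\lambda)\iden$, and on the $I_{(m+2r-N)}$-block $M_1$ contributes the identity, so the entry is $(1-\lambda)\iden+\lambda\iden=\iden$. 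This reproduces the first matrix in the statement. For the nonlinear part, the added $(1-\lambda)(q-q_*)$ contributes nothing, so $H^\lambda_\tau$ merely rescales the $\tau(\mathcal N(q)-\mathcal N(q_*))$ term of Lemma~\ref{lem-DRS-expression} by $\lambda$; its $\widetilde C$-representation is therefore unchanged apart from the prefactor, yielding the claimed second matrix with the $\lambda\tau$ scaling.

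I expect no serious obstacle here: once the fixed-point-set identification (Lemma~\ref{lem_generalized}) and the orthonormality $\widetilde C\widetilde C^*=\iden$ are in place, the argument is linear bookkeeping. The single point that requires care is the Pythagorean rewriting $(1-\lambda)\iden+\lambda\cos^2\Theta=\cos^2\Theta+(1-\lambda)\sin^2\Theta$, which is precisely what forces the generalized diagonal blocks into the advertised form. As a consistency check, one should note that organizing the proof instead around Lemma~\ref{lem-gDRS-expression} gives the identical result, since $\widetilde H^\lambda=(1-\lambda)\iden+\lambda\widetilde H$ and the nonlinear coefficient there is exactly $\lambda$ times that of Lemma~\ref{lem8}.
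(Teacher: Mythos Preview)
Your proposal is correct and follows essentially the same approach as the paper: the paper's proof simply says it ``follows from Lemma~\ref{lem-DRS-expression} and Lemma~\ref{lem-gDRS-expression},'' which amounts precisely to writing $\widetilde H^\lambda=(1-\lambda)\iden+\lambda\widetilde H$, expressing $(1-\lambda)\iden$ in the $\widetilde C$-basis via $\widetilde C\widetilde C^*=\iden$, and reading off the blocks. Your explicit verification that $\widetilde C\widetilde C^*=\iden$ by summing \eqref{eq:B0} and \eqref{eq:B1} is a nice detail the paper leaves implicit, but the argument is otherwise identical.
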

\begin{proof}
    The proof follows from Lemma \ref{lem-DRS-expression} and Lemma \ref{lem-gDRS-expression}.
\end{proof}
Notice that $\widetilde{H}^{\lambda}$ is a normal matrix and for any $\lambda \in (0,2),$ $ \cos\theta_1 \le \| \widetilde{H}^{\lambda}\|_2 = \sqrt{\lambda(2-\lambda)\cos^2\theta_1 + (1-\lambda)^2} < 1.$ Therefore, we can state the following theorem for generalized ADMM.
\begin{theorem}
\label{thm-main_gen}
    Let $\theta_1$ be the smallest non-zero principal angle  between the two linear spaces $\mathcal{K}\ker{A}= \{ \mathcal{K}u: u \in \ker(A)\}$ and $\ker(\widetilde{B})$ with $\widetilde{B}$ defined in \eqref{eq:bTildeMatrix}. 
    Consider generalized ADMM solving \eqref{eq:tvcs_primal-1} with a step size $\gamma=\frac{1}{\tau}>0$, which is equivalent to generalized DRS solving \eqref{eq:doubleDual} with a step size $\tau$. 
    The convexity of the problem \eqref{eq:doubleDual} implies that  generalized DRS iterates $q_k$ converge to a fixed point $q_*$.
    Assume that   $q_*$ is  an interior fixed point.
    Under Assumption \ref{asmp:sizeOmega}, with probability $1-O(N^{-s})$,  for small enough $\tau>0$, there is an integer $K$ such that for all $k\ge K$, 
    \begin{align*} \|q_k - q_*\| \le \left[\|\widetilde{H}^{\lambda} \|_2 + \max_{j:\|(v_*)_j\|\ne 0} \frac{2\tau\lambda}{\|(v_*)_j\|_2} \right]^{k-K} \|q_K - q_*\|,\end{align*} where $\|\widetilde{H}^{\lambda}\|_2 \in [\cos\theta_1,1)$ for all $\lambda \in (0,2).$
 \end{theorem}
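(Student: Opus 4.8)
The plan is to mirror the proof of Theorem \ref{thm-main}, substituting the generalized lemmas \ref{lem_generalized}, \ref{lem-gDRS-expression}, and \ref{lem-grs_struct} for their standard counterparts and tracking where the relaxation parameter $\lambda$ enters. First I would record that, by Lemma \ref{lem_generalized}, the generalized operator $H^\lambda_\tau$ has exactly the same fixed-point set as $H_\tau$; combined with Lemma \ref{lem6} and Definition \ref{def:qSpace}, every fixed point $q_*$ lies in $\mathcal Q$ and satisfies $\prox_f^\tau(q_*)=v_*$. Since $\lambda\in(0,2)$, one has $H^\lambda_\tau=(1-\tfrac\lambda2)\iden+\tfrac\lambda2 R_h^\tau R_f^\tau$, which is $(\lambda/2)$-averaged and hence produces iterates $q_k$ converging to such a $q_*$ \cite{lions1979splitting}. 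The interior-fixed-point hypothesis then gives an index $K_0$ with $q_k\in\mathcal Q$ for all $k\ge K_0$, which is precisely the regime in which Lemmas \ref{lem-gDRS-expression} and \ref{lem-grs_struct} apply.

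Second, I would apply Lemma \ref{lem-gDRS-expression} (equivalently, read off the $\Ct$-coordinate form in Lemma \ref{lem-grs_struct}) to split $H^\lambda_\tau(q_k)-H^\lambda_\tau(q_*)$ into a linear part $\widetilde H^\lambda(q_k-q_*)$ and a nonlinear part proportional to $\mathcal N(q_k)-\mathcal N(q_*)$. The nonlinear coordinate matrix in Lemma \ref{lem-grs_struct} is exactly $\lambda$ times the one appearing in Lemma \ref{lem-DRS-expression}, so the estimate used in Theorem \ref{thm-main}, which combines the spectral structure of that matrix with Lemma \ref{lem7}, applies verbatim and yields the bound $\max_{j:\|(v_*)_j\|\ne0}\tfrac{2\tau\lambda}{\|(v_*)_j\|}\|q_k-q_*\|$, precisely the second term in the stated rate.

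Third comes the linear part, where the relaxation genuinely enters. Here I would use the block form of $\widetilde H^\lambda=(1-\lambda)\iden+\lambda\widetilde H$ recorded in Lemma \ref{lem-grs_struct}. The remark preceding the theorem shows $\widetilde H^\lambda$ is normal, its $2\times2$ rotation–scaling block at angle $\theta_1$ attaining $\sqrt{\lambda(2-\lambda)\cos^2\theta_1+(1-\lambda)^2}$, which dominates the $(1-\lambda)I_{(N-2r+m)}$ block since $\lambda(2-\lambda)>0$. The only obstruction is the $\Ct_2$ block $\Im(\widetilde B^*)\cap(\mathcal K^*)^{-1}[\Im(A^*)]$, on which $\widetilde H^\lambda$ acts as the identity. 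When $m+2r=N$ this block is absent and $\|\widetilde H^\lambda\|_2=\sqrt{\lambda(2-\lambda)\cos^2\theta_1+(1-\lambda)^2}\in[\cos\theta_1,1)$ directly. Otherwise I would establish the generalized analogue of Lemma \ref{C2-subspace-Lemma}: because Lemma \ref{lem-grs_struct} shows the $\Ct_2$ block equals the identity on the linear part and vanishes on the nonlinear part, the Euclidean projection $\mathbb P$ onto this subspace satisfies $\mathbb P(q_{k+1}-q_*)=\mathbb P(q_k-q_*)$, so $q_k\to q_*$ forces $\mathbb P(q_{K_1}-q_*)=0$ for some $K_1$ and hence for all larger $k$; on the orthogonal complement the linear factor is again $\sqrt{\lambda(2-\lambda)\cos^2\theta_1+(1-\lambda)^2}$.

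Finally I would combine the two bounds by the triangle inequality to obtain, for all $k\ge K:=\max(K_0,K_1)$, the per-step contraction $\|q_{k+1}-q_*\|\le\bigl[\|\widetilde H^\lambda\|_2+\max_{j:\|(v_*)_j\|\ne0}\tfrac{2\tau\lambda}{\|(v_*)_j\|}\bigr]\|q_k-q_*\|$, and iterate from $K$ (with small $\tau>0$ ensuring the bracket is below one, so the bound is a genuine linear rate). I expect the main obstacle to be the generalized version of Lemma \ref{C2-subspace-Lemma}: one must confirm from Lemma \ref{lem-grs_struct} that the relaxation leaves the $\Ct_2$ component of the error exactly invariant, namely that the identity block is unchanged and the nonlinear contribution still vanishes on that subspace, so that this component cannot persist in the limit. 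Everything else is a routine transcription of the proof of Theorem \ref{thm-main} with the scalar $\lambda$ inserted, together with the spectral-norm computation already recorded in the remark.
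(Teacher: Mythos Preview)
Your proposal is correct and follows precisely the approach the paper intends: the paper does not write out a separate proof of Theorem \ref{thm-main_gen} but simply records Lemmas \ref{lem_generalized}, \ref{lem-gDRS-expression}, \ref{lem-grs_struct} and the spectral-norm computation for $\widetilde H^\lambda$, implicitly deferring to the argument of Theorem \ref{thm-main}. Your plan reproduces that argument faithfully, including the two-case treatment of the $\Ct_2$ block via a generalized Lemma \ref{C2-subspace-Lemma}, which is indeed straightforward since Lemma \ref{lem-grs_struct} shows the $\Ct_2$ block of $\widetilde H^\lambda$ is still the identity and the nonlinear part still vanishes there.
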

}

It is possible to extend the discussion to more general problems and algorithms, but we do not pursue these extensions. The following extensions can be considered:
\begin{enumerate}
\item  In Theorem \ref{thm-main}, we only considered the case that $q_*\in \mathcal Q$ lies in the interior of $\mathcal Q$. For the non-standard cases, iterates $q_k$ converge to a fixed point lying on the boundary of the set $\mathcal Q$, and it is possible to have a similar result with a redefined angle $\theta_1$ following the arguments for such non-standard cases in \cite{demanet2016eventual}.  Whether the converged fixed point is standard or non-standard depends on the data $(A,b)$ and initial guess $q_0$ of the DRS iteration. In our numerical tests,  we have not observed non-standard cases.
     
    \item One can also consider adding regularization \cite{friedlander2008exact} to problem \eqref{eq:tvcs_primal-1}. One suitable way of adding regularization is to add $\ell^2$ regularization to the equivalent problem \eqref{eq:doubleDual} with parameter $\alpha$. 
     \begin{equation}\label{eq:DD_regularize}
       \min_{v\in \R^{N\times d}} \|v\|_{1,2}+\iota_{\mathcal K \{u: Au=b\}}(v)+
       \frac{1}{2\alpha}\|v\|^2,
 \end{equation}
  where $\|\cdot\|$ is the 2-norm for $\R^{Nd}$.
 When $\alpha$ is large enough, \eqref{eq:DD_regularize} gives the same minimizer \cite{yin2010analysis}.
%  It corresponds to the following regularization in the primal form: 
%     \begin{equation}\label{eq:ddK_regularize}
%        \min_{u} \|\mathcal K u\|_{1,2}+\frac{1}{2\alpha}\|\mathcal K u\|^2\quad \mbox{s.t.} \quad  \hat u(k)=b_k, k\in \Omega=\{0, i_2, \cdots, i_m\}.
% \end{equation}
% We refer to  \cite{demanet2016eventual} for techniques of incorporating the relaxation $\lambda$ and regularization into analyzing $\widetilde{H}$, which seems possible to be combined with the discussion in the previous subsection. 
{It is still possible to characterize the set of fixed points for the DRS operator, $H^{\lambda, \alpha}_{\tau}$, corresponding to \eqref{eq:DD_regularize}. We can also find the expression, $$H^{\lambda, \alpha}_{\tau}(q) - H^{\lambda, \alpha}_{\tau}(q_*) = \widetilde{H}^{\lambda,\alpha}(q-q_*) + \frac{\tau\lambda \alpha}{\alpha + \tau} (I-2C)(I-\Bt^+\Bt)\left(\mathcal{N}(q) - \mathcal{N}(q_*) \right),$$ for any $q\in\mathcal{Q}$ and generalized DRS fixed point $q_*$. However, the matrix $\widetilde{H}^{\lambda,\alpha}$ is no longer normal, which causes additional difficulties in our current argument for estimating the rate, due to the extra nonlinear operator $\mathcal N$. Some new techniques are needed to prove a local linear rate, which will be our future work.}
\end{enumerate}
 
\section{Numerical Tests}
\label{sec:tests}

We report numerical results of implementing Algorithm \ref{alg-Gprox-PDHG-2} with step sizes $\sigma=\frac{1}{\tau}$ for solving the TVCS problem \eqref{eq:tvcs_primal-1} formulated as in \eqref{eq:primala}, which is equivalent to ADMM, with step size $\gamma=\frac{1}{\tau}$ on \eqref{eq:primala} by the relations in Table \ref{tab:relations}. We construct TVCS problems using 2D and 3D Shepp-Logan images \cite{shepp1974fourier} as well as {3D MRI data of the human brain provided with the consent of individual(s) who wished to remain unacknowledged. More information on this data can be found in the Data Acknowledgement section.} The 2D tests were performed on a MacBook Air with M1 Chip (8 core) with 16GB memory, while the 3D tests were performed on one Nvidia A100 GPU card with {80GB} memory, implemented in Python with single and double precision.
 Similar to \cite{liuGPU}, the Python package JAX was used to achieve a simple implementation on the GPU. 
  Unless stated otherwise, the initial conditions used for all the tests were the given data $u_0 =  {\mathcal{F}}^* M  {\mathcal{F}} u_*$ and $p_0 = 0$, where $u_*$ is the true image (Shepp-Logan or MR image). The mask matrix $M$ is generated  randomly. {The primary goal of the numerical tests is to validate the linear convergence rate for 2D and 3D TVCS problems. In addition, the experiments aim to examine how the step size \(\tau\) influences the local convergence rate in practice, study the effects of the regularization and relaxation parameters, evaluate the performance on real MRI data of the human brain, and investigate how single- and double-precision arithmetic impact both the convergence rate and the computational time.
}

\subsection{2D Shepp-Logan Image}
 We first study how sharp the estimate in Theorem \ref{thm-main} is for small $\tau$. We construct a test problem for TVCS starting out with a 2D Shepp-Logan image \cite{shepp1974fourier}  where we randomly observe  $30 \%$ of the frequencies, including the zeroth frequency.
\subsubsection{Local Linear Rate Validation}
 Figure \ref{fig:intro-linear} (left) shows result $\tau=0.01$   for 2D image of size $64\times 64$.
For computing the angle $\theta_1$, we need the minimizer $v_*$, to \eqref{eq:doubleDual}, which is approximated by running $10000$ iterations of ADMM on \eqref{eq:tvcs_primal-1} and then using Table \ref{tab:relations} to transform the ADMM variables into the physical variable for DR on \eqref{eq:doubleDual}.
The angle between the subspaces  $\mathcal{K}\ker(A)$ and $\ker(\Bt)$  is then computed by SVD
In Figure \ref{fig:intro-linear} (left), we observe that  $\cos\theta_1$ matches quite well with the actual local linear rate. The estimate in Theorem \ref{thm-main} is more conservative, but for  $\tau=0.01$ it still seems a good estimate in practice. 
On the other hand, the linear convergence regime is not reached until iteration number 4300, and the number of iterations needed to enter the linear convergence regime can be sensitive to $\tau$ in practice.  A larger $\tau$ may give fewer iterations needed to enter the linear convergence regime \cite{liang2017local}. 
 
\subsubsection{The Effects of Different step size $\tau$}\label{sec:tests_tau}
For the same 2D problem,  Figure \ref{fig:stepSize_comp} shows that the results for different step sizes ranging from $\tau = 0.01$ to $\tau = 20$, which does not induce a big change in the local linear rate, even though our provable rate does contain $\tau$ in the estimate.
  We remark that the dependence on $\tau$ in Theorem \ref{thm-main} can be removed in our proof when $d=1$, i.e., the local linear rate of Algorithm \ref{alg-ADMM} for $\ell^1$-norm CS problem does not depend on step size in both analysis and numerical tests \cite{demanet2016eventual}.
On the other hand, Figure \ref{fig:stepSize_comp} shows that different step sizes significantly affect the number of iterations needed to enter the linear convergence regime. As shown in Figure \ref{fig:stepSize_comp}, for $\tau=20.0$, the number of iterations it takes to enter linear convergence regime is $l=0$; that is, numerically, this suggests global linear convergence.

\begin{figure}[htbp!]
    \centering
    \includegraphics[width=0.5\textwidth]{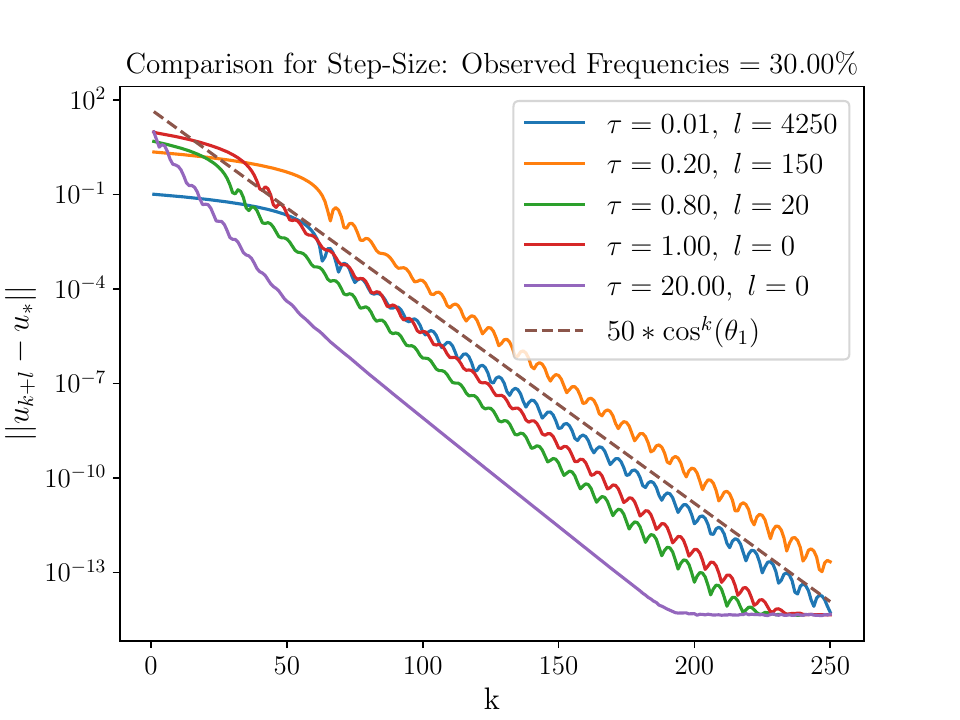}
    \caption{Algorithm \ref{alg-ADMM} with $\gamma = \frac{1}{\tau}$ for \eqref{eq:tvcs_primal-1} with $30 \%$ observed frequencies for  a 2D Shepp-Logan image of size $64\times 64$. {\bf Here $k$ is not the iteration number. Instead, $k+l$ is the iteration number where $l$ is the number of iteration needed to enter the linear convergence regime. }}
    \label{fig:stepSize_comp}
\end{figure}

\subsection{Effects of Regularization and Relaxation}\label{sec-regularization}

Consider a generalized version of ADMM by applying the general DRS operator  
 $H^\lambda_\tau=(1-\lambda) \iden + \lambda\frac{\iden+\refl_h^\tau\refl_f^\tau }{2}$ with a relaxation parameter $\lambda\in (0,2)$
to the regularized problem \eqref{eq:DD_regularize} with a regularization parameter $\alpha$.
See Figure \ref{fig:linear_regularized} for results with different $\lambda$ and  $\alpha = 100$. For these tests, the 2D Shepp-Logan image is $128 \times 128$, the step size is $\gamma = \frac{1}{\tau} = \frac{1}{22}$, and $30\%$ of the frequencies are observed. As proven in \cite{demanet2016eventual},   special choices of parameters $\alpha$ and $\lambda$ can speed up the local linear convergence rate for $\ell^1$-norm CS problem. Figure \ref{fig:linear_regularized} shows that this is also the case for TVCS in two dimensions.

\begin{figure}[tbhp!]
    \centering
    \includegraphics[width=0.5\textwidth]{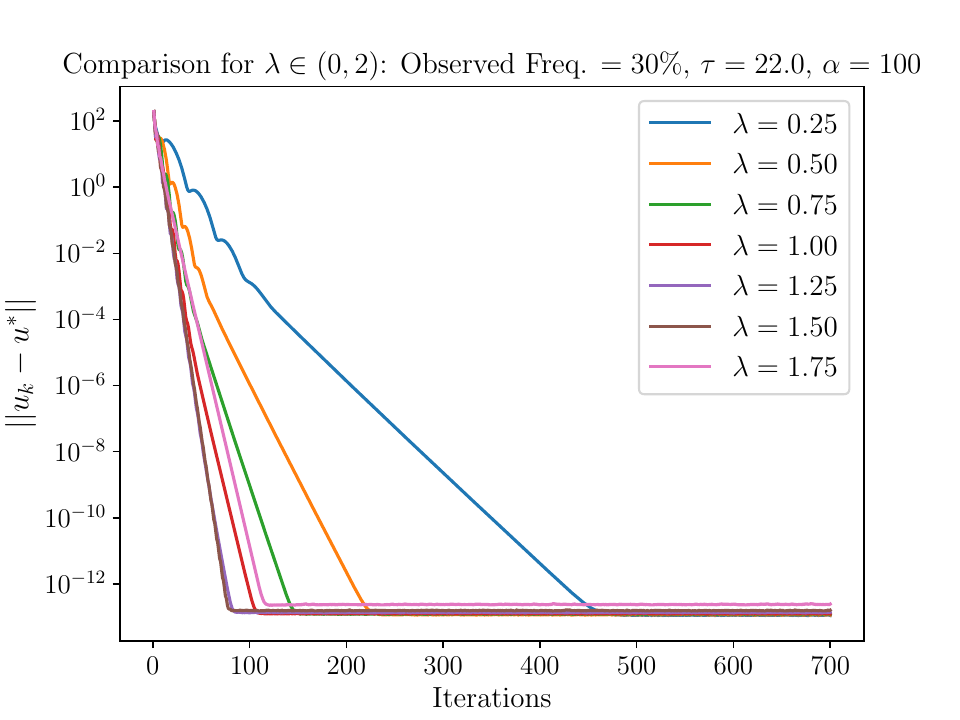}
    \caption{Local convergence rate of generalized ADMM (corresponding to the general DRS operator $H_{\tau}^{\lambda}$) solving \eqref{eq:DD_regularize} with different parameters $\lambda$, $\alpha = 100$. A 2D Shepp-Logan image of size $128\times 128$ with $30 \%$ observed frequencies. }
    \label{fig:linear_regularized}
\end{figure}

\subsection{3D Images}

 \begin{figure}[htbp]
    \centering
    \includegraphics[width=0.8\textwidth]{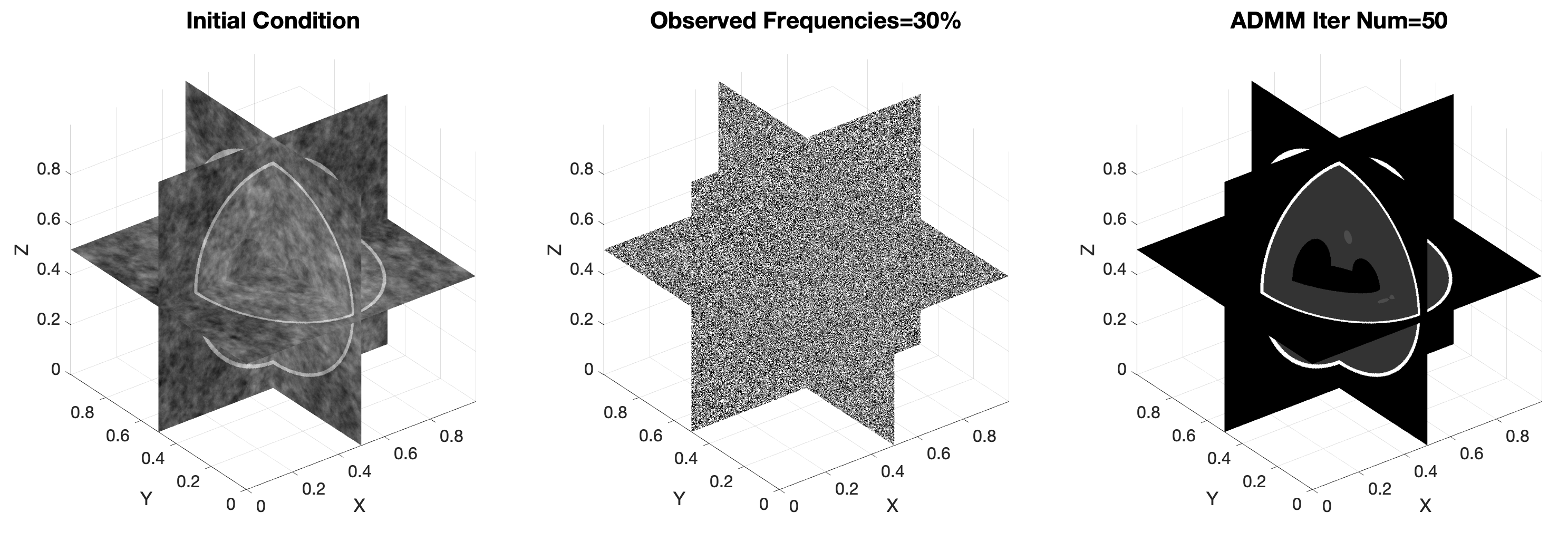}
    \caption{ Left: The initial guess in ADMM. Middle: the non-zero entries of the mask, observed frequencies are $30\%$. Right: the primal iterate output by ADMM after $50$ iterations $u_{50}$ for a 3D Shepp-Logan image
    of size $512^3$.}
    \label{fig:3dsheppLogan}
\end{figure}

\begin{figure}[tbhp!]
    \centering
    \includegraphics[width=0.8\textwidth]{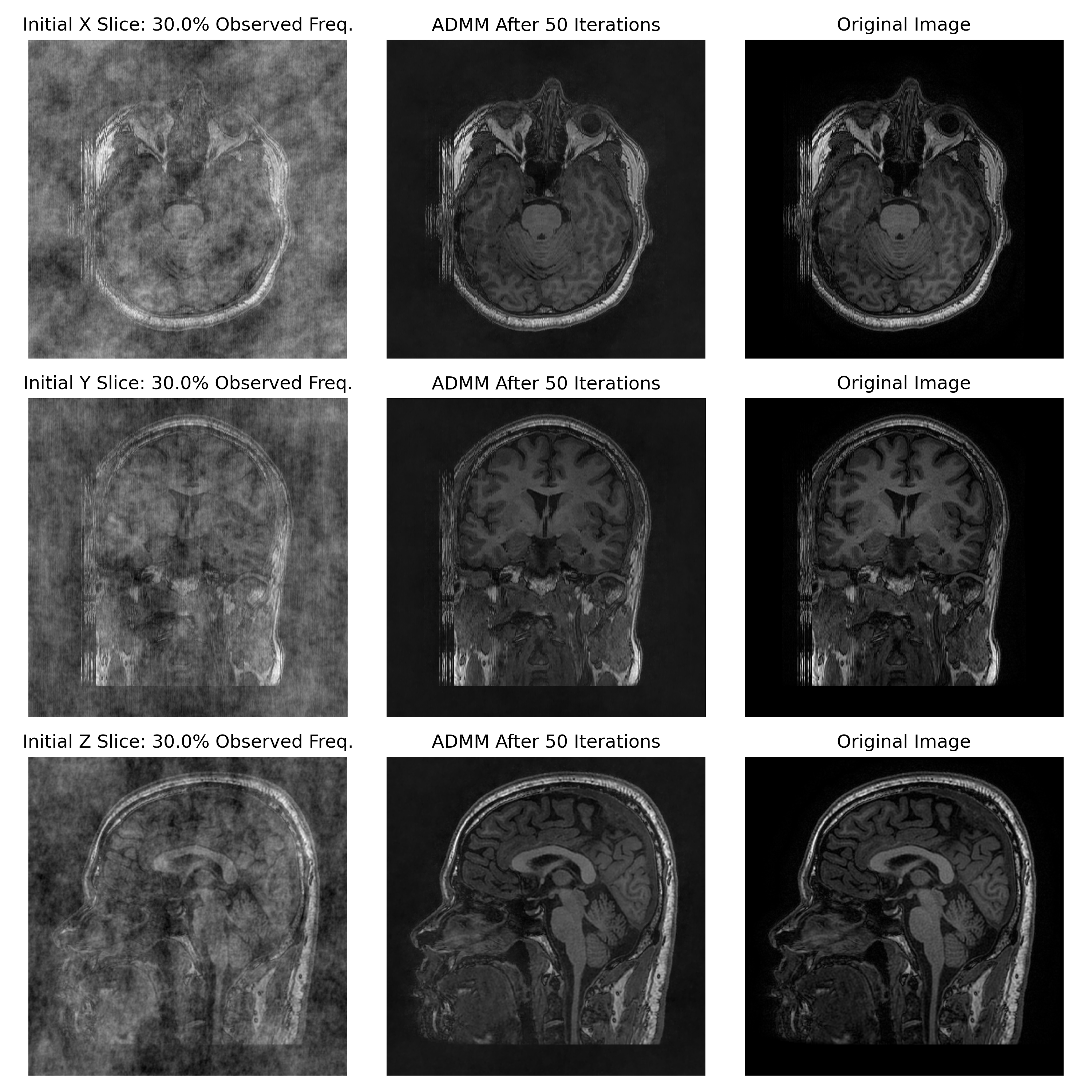}
    \caption{3D MR image of size $512^3$. Left: slices of the initial condition for the primal variable in ADMM. Middle: slices of the primal variable of ADMM with $\gamma = \frac{1}{22}$ after $50$ iterations. Right: slices of the true MR image.  }
    \label{fig:mri_freq70}
\end{figure}

\begin{table}[ht]
 \caption{GPU Time (minutes) vs. relative error $\left(\frac{\|u_k-u_*\|}{\|u_*\|}\right)$ of ADMM (Algorithm \ref{alg-ADMM} with step size $\gamma = \frac{1}{\tau}=\frac{1}{22}$) solving \eqref{eq:tvcs_primal-1} with $30 \%$ observed frequencies for real MRI data of size $512^3$. Double precision computation in Python Jax on one Nvidia A100 card with 80GB memory.}
    \centering
    \begin{tabular}{|c|c|c|c|c|c|}
       
        \hline
        Iteration Number & 1 & 10 & 20 & 80 & 350\\ \hline
        GPU Time (min) & $0.02$ & $0.06$ & $0.1$ & $0.33$ & $1.23$  \\ \hline
        Relative Error & $6.2\times 10^{-1}$ & $2.9\times 10^{-2}$ & $7.2\times 10^{-3}$ & $8.7\times 10^{-4}$ & $9.5\times 10^{-5}$ \\ \hline
    \end{tabular}
    \label{tab:gpuVsError}
\end{table}

\begin{figure}[tbhp!]
    \centering
    \includegraphics[width=0.45\textwidth]{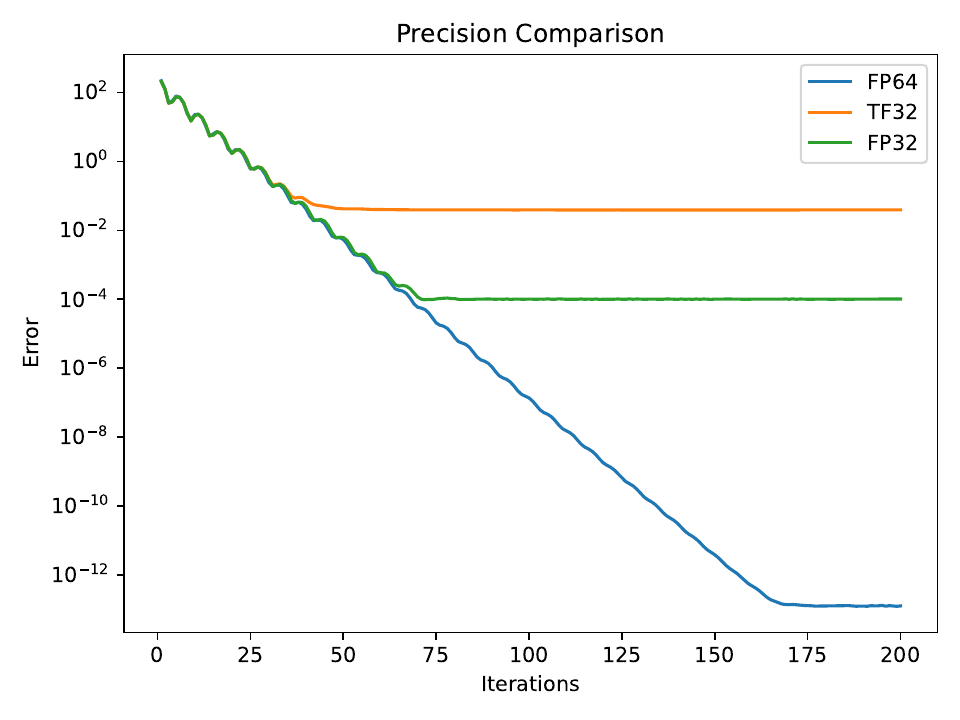}
    \caption{3D  Shepp-Logan image of size $128^3$ with $30\%$ observed frequencies. ADMM with $\gamma = \frac{1}{\tau} = \frac{1}{22}$. Performance of the algorithm using Double Precision vs Single Precision (FP32 and TF32) in Python Jax on Nvidia A100. }
    \label{fig:precision_comparison}
\end{figure}

\begin{table}[htbp!]
\caption{Comparison of the computational time (in seconds) of Algorithm \ref{alg-ADMM} with $\gamma = \frac{1}{\tau}=\frac{1}{22}$ to perform 250 iterations of ADMM implemented by Python Jax on one Nvidia A100 80GB card: double--precision (FP64) V.S. single--precision (FP32 and TF32). 3D Shepp-Logan of different sizes with $30\%$ observed frequencies. For FP64, memory is not sufficient to compute the problem size $700^3$.  }
    \centering
    \begin{tabular}{|c|c|c|c|c|}
        \hline
         Problem Size & $128^3$ & $256^3$ & $512^3$ & $700^3$  \\ \hline
        FP64 & 2.36 & 7.70 & 57.14 & -  \\ \hline
        FP32 & 2.34 & 4.78 & 31.14 & 86.98 \\ \hline
        TF32 & 2.30 & 4.30 & 23.79 & 62.22  \\ \hline
    \end{tabular}

    \label{tab:example}
\end{table}

We test large 3D problems using the 3D  Shepp-Logan image as well as some MRI data with $30\%$ observed frequencies. The step size is taken to be $\tau=0.1$. An estimate of $v_*$ was obtained by running ADMM on \eqref{eq:tvcs_primal-1} for 10,000 iterations and then using the relations in Table \ref{tab:relations} to obtain the physical variable of DR $v_k$. The angle between two subspaces is approximated by the procedure in  \cite[Appendix B]{demanet2016eventual}.

First, we consider a  3D  Shepp-Logan image of size $512^3$, and the performance is shown in  Figure \ref{fig:intro-linear} (right) and also Figure \ref{fig:3dsheppLogan}.
Next we verify the performance on some MR images of size $512^3$ with $30 \%$ frequencies observed. Figure \ref{fig:mri_freq70} shows that   $50$ iterations of ADMM with $\gamma = \frac{1}{22}$  produce a result satisfactory to the human eye.  Table \ref{tab:gpuVsError} shows 
the computational time on GPU, and the reference $u_*$ is the numerical solution after $5000$ ADMM iterations.

 Finally, we consider single precision computation on GPU, which is sufficient for many imaging purposes. 
 Results in \cite{liuGPU} show that single precision computation allows computation of larger problems on one GPU card due to the consumption of less memory. The python package JAX offers two options for single-precision computing with default Float-32 (FP32), and also TensorFloat-32 (TF32), see \cite{liuGPU} for technical details. These tests were conducted for 3D  Shepp-Logan images with $30\%$ observed frequencies, and ADMM with  $\gamma = \frac{1}{\tau} = \frac{1}{22}$. Figure \ref{fig:precision_comparison} shows that single-precision computation does not affect the local linear rate. 
In Table \ref{tab:example}, we see that single-precision computing is not only faster than double-precision (FP64), but it also allows us to compute problems of size $700^3$ while double-precision runs out of memory for any problem larger than $512^3$ on one Nvidia A100 80GB card. Moreover, the difference in speed between double-precision and single-precision is widened as the size of the problem grows larger.

% \begin{figure}[tbhp!]
%     \centering
%     \includegraphics[width=0.65\textwidth]{figures/gpuTime_vs_relError_freq_70_tau_22.000_beta_0.0_alpha_1.00.pdf}
%     \caption{GPU Time (minutes) vs. relative error of ADMM (Algorithm \ref{alg-ADMM} with step size $\gamma = \frac{1}{\tau}=\frac{1}{22}$) solving \eqref{eq:tvcs_primal} with $30 \%$ observed frequencies for real MRI data of size $512^3$.
%     Here $u_*$ is the true image and $u_k$ is the image at $k$-th iteration of ADMM, i.e., $x_k$ in Algorithm \ref{alg-ADMM}.} 
%     % \caption{GPU Time (minutes) vs. Relative Error in the primal iterate of ADMM}
%     \label{fig:3d_gpu_error}
% \end{figure}

% \begin{figure}[tbhp!]
%     \centering
%     \includegraphics[width=0.95\textwidth]{figures/freq90_tau_22.000_beta_0.0_alpha_1.00_slice_256.png}
%     \caption{GProx-PDHG ($\lambda = 1.0$ on the TVCS Problem ($\alpha = \infty$)  }
%     \label{fig:mri_freq90}
% \end{figure}

% \begin{figure}[tbhp!]
%     \centering
%     \includegraphics[width=0.95\textwidth]{figures/freq90_tau_22.000_beta_250.0_alpha_1.25_slice_256.png}
%     \caption{GProx-PDHG ($\lambda = 1.25$ on the TVCS Problem ($\alpha = 250$)  }
%     \label{fig:mri_freq70_reg}
% \end{figure}

\section{Concluding Remarks}
\label{sec:remark}
In this paper, we have provided an asymptotic linear convergence rate of ADMM applied to the Total Variation Compressed Sensing (TVCS) problem by applying DRS to an equivalent problem.  The explicit rate shows the similarities and differences between TVCS and Basis Pursuit. The results were validated with large 3D tests, where a simple but efficient GPU implementation was provided. Among these results, it was shown that the generalized version of ADMM on the regularized TVCS problem has the potential to speed up the convergence rate as in Basis Pursuit. This intuition could shed some light on how to choose parameters for the TVCS problem as well.
{Future work includes possible extensions to regularized problems.}

\begin{acknowledgements}
E.T. and X.Z. were supported by NSF grant DMS-2208515, and  M.J. was supported by NSF grant  DMS-2400641.\end{acknowledgements}
\begin{datavail}
    MRI data was provided with the consent of the individual(s). The source wished to remain unacknowledged, and all data handling complied with applicable privacy and ethical guidelines to maintain confidentiality and respect the source's wishes for anonymity. All other datasets generated during and/or analysed during the current study are available from the authors on reasonable request.
\end{datavail}

\section*{Appendix}
\appendix 
\section{Derivation of Dual Problems}
\label{appendix-dual}

 For   $G^*=g^*\circ (-\mathcal K^*)$ where $g=\iota_{\{u\in \R^N: Au=b\}}(u)$ as defined in Section \ref{sec-notation-constraints}, we derive its convex dual function $G=(G^*)^*$.
 Let $\mathrm{P}(v)$ denote the projection of $v\in \R^N$ onto the affine set $\{u\in \R^N: Au=b\}$.
 Recall that $A\in \mathbb C^{m\times N}$ defined in  Section \ref{sec-notation-constraints} satisfies $AA^*=I$, thus
$\mathrm{P}(v)=v+A^*(AA^*)^{-1}(b-Av) = v + A^*(b-Av).$
For any $p \in \Im(A^*)$, let $p = A^*z$, then $z = Ap$ due to the fact $AA^* = I_{m\times m}$.
The convex conjugate of $g=\iota_{\{u\in \R^N: Au=b\}}(u)$ is the support function of the affine set, which can be simplified as follows. For $p \in \Im(A^*)$, 
\begin{equation*}
   g^*(p) = \sup_{u: Au=b}\langle u, p\rangle=\sup_{u: Au=b}\langle u, A^*z\rangle =\sup_{u: Au=b}\langle Au, Ap\rangle = \langle b, Ap \rangle= \langle A^*b, p \rangle.
\end{equation*}
Thus
$ g^*(p) = \begin{cases}
                \langle p, A^* b \rangle_{\R^N}, \quad &\mathrm{if} \ p \in \Im(A^*) \\
                +\infty, & \mathrm{otherwise}
    \end{cases}$.

Let $(\mathcal K^*)^{-1}\big[\Im (A^*)\big]$ be the pre-image of $\Im (A^*)$ under $\mathcal K^*$.
By the Lemma above, 
\begin{align*}
    g^*(-\mathcal K^*p) &= \begin{cases}
                -\langle \mathcal K^*p, A^* b \rangle \quad &\mathrm{if} \ \mathcal K^*p \in \Im(A^*) \\
                +\infty & \mathrm{otherwise}.
    \end{cases} =  -\langle p, \mathcal KA^* b \rangle + \iota_{ (\mathcal K^*)^{-1}\big[\Im (A^*)\big]}(p).
\end{align*}
  Notice that  $\langle p, \mathcal KA^* b \rangle$ is continuous in $p$. Since $\Im (A^*)$ is a closed set and $K^*$ is a  bounded linear transformation, $(\mathcal K^*)^{-1}\big[\Im (A^*)\big]$ is a closed convex set. Since an indicator function of a closed convex set is a closed convex proper function, $G^* = g^*\circ -K^*$ is a closed convex proper function. By the  regularity condition $\Im(A^*) \cap \Im (\mathcal K^*) \ne \emptyset$, we have 
\begin{align*}
    -\mathcal K\{u:Au=b\}=-\mathcal K\pd g^*(-K^*p) = \pd(g^*\circ -\mathcal K^*)(p) = \pd G^*(p),\quad \forall p \in (\mathcal K^*)^{-1}\big[\Im (A^*)\big].
\end{align*}
So  
    $G(v)=[G^*]^*(v) =  \sup_{p}[ \langle v, p\rangle_{\R^{2N}} - G^*(p)] = \begin{cases}
        0 \quad &\mathrm{if} \ v \in -\mathcal K\{u:Au = b\} \\
        +\infty &\mathrm{otherwise}
    \end{cases}.$
Define $h(v):= G(-v) =\iota_{\mathcal K\{u:Au = b  \}}(v)$, then we have derived the formulation \eqref{eq:doubleDual}. 
Since $G(v)$ is an indicator function, its proximal operator is a projection, which can be written as
    $$\prox_{G^*}^{\gamma}(q) = \mathcal{F}^*  \Big[\widetilde{M}^*\widetilde{M}+ (I-\widetilde{M}^* \widetilde{M^*}) (I- \Lambda (\Lambda^* \Lambda)^+ \Lambda^*)\Big] \mathcal{F}(q+\gamma \mathcal KA^*b),$$
    where $\widetilde{M}$ is defined in \eqref{M-tilde}.
Then by Theorem \ref{eqn:pd_relation} (iv), we obtain  $\prox_h^\tau$  as \eqref{prox-h}.

\section{Equivalence of DRS on Primal and Dual Problems}\label{appendix-drs-primal-dual}
Consider $ (P)   \min_x f(x) + g(x)  $ and $
 (D)    \min_p f^*(p) + g^*(-p) $ for two closed convex proper functions $f(x)$ and $g(x)$.
DRS with a step size $\gamma>0$ and a relaxation parameter $\lambda$ for (P) is
\begin{equation}
\label{DR-primal}
\begin{cases}
      s_{k+1} &= s_k - \lambda t_k + \lambda  \prox^{\gamma}_g(2t_k-s_k),\quad \lambda\in (0,2) \\
    t_{k} &= \prox_f^{\gamma}(s_{k})
\end{cases}.
\end{equation}
With the fact $\prox_{g^*\circ(-\iden)}^\tau(p)=-\prox_{g^*}^\tau(-p)$,  DRS with a step size $\frac{1}{\gamma}$  and a relaxation parameter $\lambda$ for the dual problem can be written as
\begin{equation}
\label{DR-dual}
\begin{cases}
    q_{k+1} &= q_k -  \lambda p_k - \lambda  \prox_{g^*}^{\frac{1}{\gamma}}(-2p_k+q_k),\quad \lambda\in (0,2) \\
    p_{k} &= \prox_{f^*}^{\frac{1}{\gamma}} (q_{k})
    \end{cases}.
\end{equation}
With
 Moreau Decomposition,  
\eqref{DR-primal} is equivalent to \eqref{DR-dual}
via 
$    q_k = \frac{s_k}{\gamma},    p_k = \frac{s_k - t_k}{\gamma}.$

\section{Proof of Equivalence of G-prox PDHG and ADMM}\label{sec:equiv}
 \label{appendix-GroxPDHG}
 We give the proof of  Theorem \ref{thm-equivalence-GproxPDHG}
 The main tool we will need is the following lemma:
\begin{lemma}\label{theo:chainRule}
For a closed convex proper function $h$, $\beta >0$, and a matrix $\mathcal K$,
\[ \hat{p} = \argmin_p h(p)+\frac{\beta}{2}||\mathcal Kp-q||^2 \implies \beta(\mathcal K\hat{p}-q) = \prox^{\beta}_{h^*\circ (-\mathcal K^*)}(-\beta q).\]
\end{lemma}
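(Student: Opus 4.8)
The plan is to characterize both sides of the claimed identity through first-order optimality conditions and then match them using the conjugate subgradient relation in Theorem \ref{eqn:pd_relation}(iii). I would begin by writing the optimality condition for the left-hand minimization. Since $\hat p$ minimizes the convex function $p\mapsto h(p)+\frac{\beta}{2}\|\mathcal Kp-q\|^2$, Fermat's rule gives
\begin{equation*}
0\in \pd h(\hat p)+\beta\,\mathcal K^*(\mathcal K\hat p-q),
\end{equation*}
that is, $-\beta\,\mathcal K^*(\mathcal K\hat p-q)\in \pd h(\hat p)$. The natural move is to introduce the shorthand $w:=\beta(\mathcal K\hat p-q)$, which is exactly the quantity appearing on the left of the target equation, so the optimality condition reads $-\mathcal K^* w\in \pd h(\hat p)$.

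Next I would transfer this membership to the conjugate $h^*$. By Theorem \ref{eqn:pd_relation}(iii), the relation $-\mathcal K^* w\in \pd h(\hat p)$ is equivalent to $\hat p\in \pd h^*(-\mathcal K^* w)$. Applying the linear map $\mathcal K$ then yields $\mathcal K\hat p\in \mathcal K\,\pd h^*(-\mathcal K^* w)$, and using the subdifferential chain rule for the affine composition $\psi(z):=h^*(-\mathcal K^* z)$, namely $\pd\psi(z)=-\mathcal K\,\pd h^*(-\mathcal K^* z)$, this rearranges to $-\mathcal K\hat p\in \pd\psi(w)$, hence $-\beta\,\mathcal K\hat p\in \beta\,\pd\psi(w)$.

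It then remains to verify that $w$ satisfies the defining optimality condition of $\prox^{\beta}_{\psi}(-\beta q)$. Writing out $\prox^{\beta}_{\psi}(-\beta q)=\argmin_z \psi(z)+\frac{1}{2\beta}\|z+\beta q\|^2$, its minimizer $z^*$ is characterized by $-(z^*+\beta q)\in \beta\,\pd\psi(z^*)$. Substituting $z^*=w$ and using $w=\beta\mathcal K\hat p-\beta q$, a one-line computation gives $-(w+\beta q)=-\beta\,\mathcal K\hat p$, which is precisely the membership established in the previous step. Therefore $w$ solves the prox problem, i.e. $\beta(\mathcal K\hat p-q)=\prox^{\beta}_{h^*\circ(-\mathcal K^*)}(-\beta q)$, as desired.

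I expect the only genuine subtlety to be the subdifferential chain rule for $\psi=h^*\circ(-\mathcal K^*)$, which in full generality requires a constraint qualification (e.g. a relative-interior condition on $\mathrm{dom}\,h^*$ relative to the range of $-\mathcal K^*$); in the present setting this is guaranteed by the same total-duality/regularity hypotheses invoked earlier in the paper, so I would simply cite those. The remaining work is careful bookkeeping of the adjoint $(-\mathcal K^*)^*=-\mathcal K$ and of the factors of $\beta$, together with the standard observation that $z\in\pd\psi(w)$ implies $\beta z\in\beta\,\pd\psi(w)$; none of this presents a real obstacle.
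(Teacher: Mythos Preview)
Your proposal is correct and follows essentially the same route as the paper: optimality condition for $\hat p$, pass to $h^*$ via Theorem~\ref{eqn:pd_relation}(iii), apply the chain rule to $h^*\circ(-\mathcal K^*)$, and then recognize the resolvent/prox characterization at $w=\beta(\mathcal K\hat p-q)$. The only cosmetic difference is that the paper manipulates directly into the form $-\beta q\in (I+\beta\,\pd\psi)(w)$ before inverting, whereas you verify the prox optimality condition; the two are equivalent, and your remark about the constraint qualification for the chain rule is well placed.
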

\begin{proof} 
By  Theorem \ref{eqn:pd_relation} (iii), we have $0 \in \pd h(\hat{p}) + \beta \mathcal K^*(\mathcal K\hat{p} -q)$, which holds if and only if $\hat{p} \in \pd h^* \Big( -\beta K^*(K\hat{p}-q) \Big)$.
Multiplying both sides  by $-\mathcal K$, we get
$-\mathcal K\hat{p} \in -\mathcal K \pd h^*\Big[ -\beta \mathcal K^* (\mathcal K\hat{p}-q) \Big].$
Let $y= \beta \Big( \mathcal K\hat{p} - q \Big)$ and $g(x) = -\mathcal K^*x$. By chain rule, we have $$-\mathcal K\pd h^*\Big[g(y) \Big] = \pd (h^*\circ g)(y) = \pd [h^* \circ (-\mathcal K^*)] \Big( \beta[\mathcal K\hat{p}-q] \Big)\Rightarrow -\mathcal K\hat{p} \in \pd [h^* \circ (-\mathcal K^*)]\Big(\beta[\mathcal K\hat{p}-q]  \Big).$$
By adding $\mathcal K\hat{p}-q$ then multiplying $\beta$ to both sides, we get
	\begin{align*}
		-\beta q \in \beta(\mathcal K\hat{p}-q) + \beta \pd(h^* \circ -\mathcal K^*)\Big[\beta(\mathcal K\hat{p}-q) \Big] = \Big[ I + \beta \pd [h^* \circ (-\mathcal K^*)] \Big] \Big(\tau(\mathcal K\hat{p}-q)  \Big),
	\end{align*}
which implies  $\beta(\mathcal K\hat{p}-q) = \Big[ I + \beta \pd (h^* \circ -\mathcal K^*)  \Big]^{-1}(-\beta q) = \prox^{\beta}_{h^*\circ -\mathcal K^*}(-\beta q).$  
\qed
\end{proof}	
The first line of G-prox PDHG with step size $\tau$ in Algorithm \ref{alg-Gprox-PDHG}  can be written as $ u_{k+1} = \argmin_u g(u) +   \frac{1}{2\tau}||\mathcal Ku-(\mathcal Ku_k-\tau w_k)||^2.$
Apply Lemma \ref{theo:chainRule} to the line above with $h = g$, $\widehat{p}=u_{k+1}$, $\beta = \frac{1}{\tau}$, and $q = \mathcal Ku_k - \tau w_k$, we get
$ \mathcal Ku_{k+1} -\mathcal Ku_k + \tau w_k = \tau \prox^{\frac{1}{\tau}}_{g^*\circ(-\mathcal K^*)}(w_k - \frac{1}{\tau} \mathcal Ku_k  ).$
 By Moreau  Decomposition, the second line of G-prox PDHG with $\tau = \frac{1}{\sigma}$ can be written as
\begin{align*}
    v_{k+1} &=  \argmin_v f^*(v)  + \frac{\tau}{2} ||v -(v_k+\mathcal Ku_{k+1})||^2 = v_k + \frac{1}{\tau}\mathcal Ku_{k+1} - \frac{1}{\tau}\prox_f^{\tau}(\tau v_k + \mathcal Ku_{k+1}).
\end{align*}
Thus, the G-prox PDHG in Algorithm \ref{alg-Gprox-PDHG}  with $\tau = \frac{1}{\sigma}$ yields
\begin{subequations}
    \label{gprox-form}
    \begin{align}
		\mathcal Ku_{k+1} -\mathcal Ku_k + \tau w_k &= \tau \prox^{\frac{1}{\tau}}_{g^*\circ(-\mathcal K^*)}(w_k - \frac{1}{\tau}\mathcal  Ku_k)     \label{gprox-form-1} \\
		\tau v_{k+1} &= \tau v_k + \mathcal Ku_{k+1} - \prox_f^{\tau}(\tau v_k + \mathcal Ku_{k+1}) \label{gprox-form-2} \\
		w_{k+1} &= 2v_{k+1} - v_k. \label{gprox-form-3}
\end{align}
\end{subequations}
The first line in Algorithm \ref{alg-ADMM} 
can be written as $ x_{k+1}= \argmin_x \ g(x) + \frac{\gamma}{2}||\mathcal Kx-(y_k-\frac{1}{\gamma}z_k)||^2.$
By Lemma \ref{theo:chainRule} with $h = g$, $\beta = \gamma$, and $\widehat{p} = y_k - \frac{1}{\gamma}z_k$, we get
\[ \resizebox{0.99\textwidth}{!}{$ -\gamma \mathcal Kx_{k+1} - (z_k - \gamma y_k) = \prox_{g^*\circ \mathcal K^*}^{\gamma}\big[\gamma y_k - z_k \big] 
   \Longleftrightarrow \gamma \mathcal Kx_{k+1} +(z_k - \gamma y_k)= \prox_{g^*\circ (-\mathcal K^*)}^{\gamma}\big[ z_k-\gamma y_k \big].$}\]
By the definition of the proximal operator, the second line of  in Algorithm \ref{alg-ADMM} reduces to
\[y_{k+1} = \argmin_y f(y) - \langle y,z_k\rangle + \frac{\gamma}{2} ||y -\mathcal Kx_{k+1}||^2  = \prox_{f}^{\frac{1}{\gamma}}\Big[ \frac{1}{\gamma} (z_k + \gamma \mathcal K x_{k+1})\Big].\]
Thus  the ADMM in Algorithm \ref{alg-ADMM}  is equivalent to
\begin{subequations}
    \label{ADMM-form}
\begin{align} 
		\gamma \mathcal Kx_{k+1} +(z_k - \gamma y_k) &= \prox_{g^*\circ -\mathcal K^*}^{\gamma}\big[ z_k-\gamma y_k \big] \label{ADMM-form-1} \\
		y_{k+1} &= \prox_{f}^{\frac{1}{\gamma}}\Big[ \frac{1}{\gamma} (z_k + \gamma \mathcal K x_{k+1})\Big] \label{ADMM-form-2} \\
		z_{k+1} &= z_k - \gamma(y_{k+1}-\mathcal Kx_{k+1}). \label{ADMM-form-3}
\end{align}    
\end{subequations}

Finally, we prove the equivalence between \eqref{gprox-form} and \eqref{ADMM-form}. Define the following variables,
\begin{equation*}
    \tau: = \frac{1}{\gamma},\quad
	v_k: = z_k, \quad
	u_k: = x_k, \quad
	\tau w_k: =\mathcal Kx_k +\tau z_k - y_k,
\end{equation*}
 then \eqref{ADMM-form-1} becomes   \eqref{gprox-form-1} by
\begin{align*}
	&\frac{1}{\tau} \mathcal Kx_{k+1} +(z_k - \frac{1}{\tau} y_k) = \prox_{g^*\circ (-\mathcal K^*)}^{\frac{1}{\tau}}\big[ z_k-\frac{1}{\tau} y_k \big] \\
	\iff &\mathcal Kx_{k+1} +\tau z_k - y_k = \tau \prox_{g^*\circ (-\mathcal K^*)}^{\frac{1}{\tau}}\big[ z_k-\frac{1}{\tau} y_k \big] \\
	 \iff& \mathcal Ku_{k+1} +\tau v_k - \big[\mathcal Ku_k + \tau (v_k - w_k) \big] = \tau \prox^{\frac{1}{\tau}}_{g^*\circ (-\mathcal K^*)}\Big[v_k - \frac{1}{\tau}\big(\mathcal Ku_k + \tau (v_k - w_k) \big) \Big] \\
	 \iff& \mathcal K(u_{k+1} - u_k) + \tau w_k = \tau \prox^{\frac{1}{\tau}}_{g^*\circ (-\mathcal K^*)} \Big[w_k - \frac{1}{\tau}\mathcal Ku_k \Big],
\end{align*}
  \eqref{ADMM-form-2} becomes   \eqref{gprox-form-2} by
\begin{align*}
	y_{k+1} = \prox^{\frac{1}{\gamma}}_f\Big[ \frac{1}{\gamma}(z_k + \gamma \mathcal Kx_{k+1}) \Big]
	&\iff \mathcal Ku_{k+1} + \tau(v_{k+1} -w_{k+1}) = \prox^{\tau}_f\Big[\tau v_k + \mathcal Ku_{k+1} \Big],
\end{align*}
and  \eqref{ADMM-form-3} becomes   \eqref{gprox-form-3} by
\begin{align*}
    z_{k+1} = z_k - \frac{1}{\tau}(y_{k+1}-\mathcal Kx_{k+1}) &\iff 
v_{k+1} = v_k + (w_{k+1}-v_{k+1}) \iff w_{k+1}=2v_{k+1}-v_k.
\end{align*}

\section{Derivation of the Explicit Implementation Formula}
\label{appendix-implementation}

With $  g(u)=\iota_{\{u\in \R^N: \widehat{u}(\ell) = b_{\ell}, \ell\in S\}}(u), \quad f^*(v)=\iota_{\{v\in [\R^N]^d:||v||_{\infty,2} \le 1\}}(v),$
we reformulate
\eqref{eq:tvcs_primal-1} into the form of
\eqref{eq:primala},
then we apply G-prox PDHG to \eqref{eq:primala} to obtain
\begin{subequations}
    \begin{align} 
        u_{k+1} &= \argmin_{\{u\in \R^N: \widehat{u}(\ell) = b_{\ell}, \ell\in S \}} \langle \mathcal K u , w_k\rangle + \frac{1}{2\tau}||\mathcal K(u-u_k)||^2  \label{explict-1}\\
        v_{k+1} &= \argmax_{ \{v\in [\R^N]^d : ||v||_{\infty,2} \le 1\} } \langle \mathcal K u_{k+1} , v\rangle - \frac{1}{2\sigma} ||v-v_k||^2  \label{explict-2} \\
        w_{k+1} &= 2v_{k+1} - v_k.
\end{align}
\end{subequations}

From now on, we focus on the two-dimensional problem and the extension to higher dimensions is straightforward.
We first derive an explicit formula of \eqref{explict-1}. With the notation in Section \ref{sec-notation}, let $\mathcal F u$ and $\widehat u$ be the normalized discrete Fourier transform, i.e., $\mathcal F u=\widehat u$ and $\langle u, v  \rangle_{\R^N}=\langle \mathcal F u,\mathcal F v  \rangle_{\mathbb C^N}$. 
Notice that the matrix $K$ is circulant thus diagonalizable by the 1D normalized DFT
matrix $T$. 
Regard $\mathcal F$ as an $N\times N$ matrix, then with \eqref{discreteGrad-Fourier}, we get
\begin{align*} 
&  \argmin_{\{u\in \R^N: \widehat{u}(\ell) = b_{\ell}, \ell\in S \}} \langle \mathcal K u , w_k\rangle_{\R^{2N}} + \frac{1}{2\tau}||\mathcal K(u-u_k)||_{\R^{2N}}^2  \label{explict-1}\\
    =& \argmin_{\{u: \widehat{u}(\ell) = b_{\ell}, \ell\in S \}}  \langle  \begin{pmatrix}
        \mathcal F & 0 \\
        0 & \mathcal F
    \end{pmatrix} \mathcal K u, \begin{pmatrix}
        \mathcal F & 0 \\
        0 & \mathcal F
    \end{pmatrix} w_k \rangle_{\mathbb C^{2N}} + \frac{1}{2\tau }||\begin{pmatrix}
        \mathcal F & 0 \\
        0 & \mathcal F
    \end{pmatrix} \mathcal K(u-u_k)||_{\mathbb C^{2N}}^2 \\
    % =& \argmin_{\{u: \widehat{u}(k) = b_k, k\in S \}} \langle \BLambda  \mathcal F u, \begin{pmatrix}
    %     \mathcal F & 0 \\
    %     0 & \mathcal F
    % \end{pmatrix} w_n \rangle_{\mathbb C^{2N}} + \frac{1}{2\tau}||\BLambda  \mathcal F(u-u_n)||_{\mathbb C^{2N}}^2 \\  
    =& \argmin_{\{u: \widehat{u}(\ell) = b_{\ell}, \ell\in S \}} \langle  u, \mathcal F^* \BLambda^*  \begin{pmatrix}
        \mathcal F & 0 \\
        0 & \mathcal F
    \end{pmatrix} w_k \rangle_{\mathbb C^{2N}} + \frac{1}{2\tau}||\BLambda  \mathcal F(u-u_k)||_{\mathbb C^{2N}}^2 .
\end{align*}
Let $\bar v$ denote the complex conjugate of $v$.
Since both $\mathcal F^*\BLambda^*\begin{pmatrix}
        \mathcal F & 0 \\
        0 & \mathcal F
    \end{pmatrix}=\mathcal K^*$ and $\mathcal F^*\BLambda^*\BLambda \mathcal F=\mathcal K^*\mathcal K$ are  real-valued matrices,
by taking the derivative with respect to   $u\in \R^N$, we get 
$\tau  \mathcal F^*\BLambda^*\begin{pmatrix}
        \mathcal F & 0 \\
        0 & \mathcal F
    \end{pmatrix}w_k+\mathcal F^*\BLambda^*\BLambda \mathcal F(u_{k+1}-u_k)=0$.
 For   $w\in \R^{2N}$, let $w=\begin{pmatrix}
     w^1 \\ w^2
 \end{pmatrix}$ with $w^1, w^2\in \R^N$.
  With the notation $\mathcal F w=\widehat w$, 
  we have 
  $\mathcal F^*\BLambda^*\begin{pmatrix}
        \mathcal F & 0 \\
        0 & \mathcal F
    \end{pmatrix}w=\mathcal F^*\BLambda^* \begin{pmatrix}
     \widehat w^1 \\ \widehat w^2
 \end{pmatrix} =\mathcal F^*  \begin{pmatrix}
    (\Lambda\otimes I) \widehat w^1 \\ (I\otimes \Lambda)\widehat w^2
 \end{pmatrix}.$
  Let $\lambda^1_{\ell}$ $(\ell=1,\cdots, N)$ be the diagonal entries of $\Lambda\otimes I$
  and $\lambda^2_{\ell}$ $(\ell=1,\cdots, N)$ be the diagonal entries of $I\otimes \Lambda$,  then update rule in the Fourier domain yields
\begin{align*}
       \widehat{u_{k+1}}(\ell) &= b_{\ell}, \ & \ \ell \in  S \\
\widehat{u_{k+1}}(\ell) &=        \widehat{u_k}(\ell) - \tau \frac{\overline{\lambda^1_{\ell}}\widehat{w^1_k}(\ell)+\overline{\lambda^2_{\ell}}\widehat{w^2_k}(\ell)}{|\lambda_{\ell}^1|^2+|\lambda_{\ell}^2|^2}, \ & \ell \notin S.
\end{align*}

Since \eqref{explict-2} can be rewritten as
$ v_{k+1} = \argmin_{\{v\in [\R^N]^d : ||v||_{\infty,2} \le 1\} } ||v-(v_k+\sigma \mathcal K u_{k+1})||^2,$  
\eqref{explict-2} can be implemented as the projection of $v_{k} + \sigma \mathcal K u_{k+1}$
onto the $\|\cdot\|_{\infty,2}$ ball: 
\begin{align*}
    v_{k+1} = \mathrm{Projection}_{\{v: \|v\|_{\infty,2}\leq 1 \}}(v_{k} + \sigma \mathcal K u_{k+1})= \frac{v_{k} + \sigma \mathcal K u_{k+1}}{\max (1, |v_{k} + \sigma \mathcal K u_{k+1}|)}. 
\end{align*}

\printbibliography
\end{document}